\theoremstyle{plain}
   \newtheorem{theorem}{Theorem}[section]
   \newtheorem{proposition}[theorem]{Proposition}
   \newtheorem{lemma}[theorem]{Lemma}
   \newtheorem{corollary}[theorem]{Corollary}
\theoremstyle{definition}
   \newtheorem{definition}{Definition}[section]
   \newtheorem{question}{Question}[section]
   \newtheorem{example}{Example}[section] 
\theoremstyle{remark}
 \newtheorem{remark}{Remark}[section]
\newcommand{\R}{\mathbb{R}}
\newcommand{\Proj}{\mathbb{P}}
\newcommand{\Z}{\mathbb{Z}}
\newcommand{\I}{\mathcal{I}}
\newcommand{\SW}{\mathcal{SW}}
\newcommand{\X}{\mathcal{X}}
\def\newop#1{\expandafter\def\csname #1\endcsname{\mathop{\rm
#1}\nolimits}}
\DeclareMathOperator{\verts}{vert}
\keywords{}
\subjclass[2000]{}
\begin{document}
\title{Derangements, Ehrhart Theory, and Local $h$-polynomials}

\author{Nils Gustafsson}
\date{\today}
\address{Matematik, KTH, SE-100 44 Stockholm, Sweden}
\email{nilsgust@kth.se}

\author{Liam Solus}
\date{\today}
\address{Matematik, KTH, SE-100 44 Stockholm, Sweden}
\email{solus@kth.se}

\begin{abstract}
The Eulerian polynomials and derangement polynomials are two well-studied generating functions that frequently arise in combinatorics, algebra, and geometry. 
When one makes an appearance, the other often does so as well, and their corresponding generalizations are similarly linked.  
This is this case in the theory of subdivisions of simplicial complexes, where the Eulerian polynomial is an $h$-polynomial and the derangement polynomial is its local $h$-polynomial.  
Separately, in Ehrhart theory the Eulerian polynomials are generalized by the $h^\ast$-polynomials of $s$-lecture hall simplices.  
Here, we show that derangement polynomials are analogously generalized by the box polynomials, or local $h^\ast$-polynomials, of the $s$-lecture hall simplices, and that these polynomials are all real-rooted.  
We then connect the two theories by showing that the local $h$-polynomials of common subdivisions in algebra and topology are realized as local $h^\ast$-polynomials of $s$-lecture hall simplices.  
We use this connection to address some open questions on real-rootedness and unimodality of generating polynomials, some from each side of the story.
\end{abstract}

\keywords{local $h$-polynomial, local $h^\ast$-polynomial, box polynomial, $h^\ast$-polynomial, Ehrhart theory, $s$-lecture hall simplices, derangements, real-rooted, unimodal, log-concave, symmetric, derangement polynomial, lattice polytope, simplicial complex}

\maketitle
\thispagestyle{empty}

\section{Introduction}
\label{sec: introduction}
For a positive integer $d$, let $[d]:=\{1,\ldots,d\}$ and $[d]_0:=\{0,\ldots,d\}$.  
Given a discrete random variable $X: \X\longrightarrow[d]_0$ with finite sample space $\X$, let $p_k:=|X^{-1}(k)|$ for all $k\in[d]_0$.  
The polynomial 
\[
p(X; z) := p_0+p_1z+\cdots+p_dz^d,
\]
is called the {\bf generating polynomial} for $X$, and its coefficients encode the discrete probability distribution $\Proj[X =  k]$.   
A longstanding endeavor in combinatorics is to understand the properties of the distribution encoded by $p(X;z)$ when the sample space $\X$ and random variable $X$ are combinatorially significant.  
Researchers are often interested in deciding when a specific generating polynomial encodes a distribution with the statistically desirable features of the binomial distribution $B(d,1/2)$.  
Consequently, a polynomial $p(z) = p_0+p_1z+\cdots+p_dz^d\in\R[z]$ is called {\bf unimodal} if 
$
p_0\leq p_1\leq \cdots\leq p_t\geq \cdots \geq p_{d-1}\geq p_d,
$
for some index $t\in[d]$.  
It is called {\bf symmetric} (with respect to degree d) if $p_k = p_{d-k}$ for all $k\in[d]_0$, and it is called {\bf log-concave} if $p_k^2\geq p_{k-1}p_{k+1}$ for all $k\in[d-1]$.  
The inequalities defining these distributional properties are also useful in classification and equidistribution problems for generating polynomials.  
When proving distributional properties for a generating polynomial $p(X;z)$, we can make use of its factorizations, and specifically, its zeros.  
The polynomial $p(z)$ is called {\bf real-rooted} if it only has real zeros, or if $p(z)\equiv 0$.
If $p(z)$ is real-rooted with nonnegative coefficients then it is log-concave and unimodal \cite[Theorem 1.2.1]{B89}. 
Thus, it is particularly desirable if a generating polynomial of combinatorial significance is both symmetric and real-rooted.

Two fundamental, and closely related, symmetric and real-rooted generating polynomials arise when $\X=\mathfrak{S}_n$, the collection of all permutations of $[n]$.  
Given a permutation $\pi = \pi_1\pi_2\ldots\pi_n\in\mathfrak{S}_n$, we say that an index $i\in[n-1]$ is a {\bf descent} in $\pi$ if $\pi_i>\pi_{i+1}$, and we let $\des(\pi)$ denote the number of descents in $\pi$.  
We say that $i$ is an {\bf excedance} in $\pi$ if $\pi_i>i$, and similarly let $\exc(\pi)$ denote the number of excedances in $\pi$.  
We also say $\pi$ is a {\bf derangement} if $\pi_i\neq i$ for all $i$, and we let $\mathfrak{D}_n$ denote the collection of all derangements in $\mathfrak{S}_n$.  
The polynomials
\[
A_n(z) :=\sum_{\pi\in\mathfrak{S}_n}z^{\des(\pi)} 
\qquad
\mbox{and}
\qquad
d_n(z):=\sum_{\pi\in\mathfrak{D}_n}z^{\exc(\pi)}
\]
are, respectively, known as the {\bf $n^{th}$ Eulerian polynomial} and the {\bf $n^{th}$ derangement polynomial}.  
These two polynomials appear, and often together, in a wide variety of settings throughout combinatorics, geometry, and algebra.  
When they arise, researchers study how their distributional properties generalize within the given context.  
In this paper we will compare how $A_n(z)$ and $d_n(z)$ arise in the theory of local $h$-polynomials for subdivisions of simplicial complexes with how they appear in the theory of (Ehrhart) $h^\ast$-polynomials of simplices and their associated box polynomials \cite{BR07}, which are sometimes called their local $h^\ast$-polynomials \cite{KS16}.  

Within the theory of subdivisions of simplicial complexes, $A_n(z)$ arises as the $h$-polynomial of the barycentric subdivision of a simplex and $d_n(z)$ as its local $h$-polynomial \cite{S92}.  
Generalizing this example, a variety of $h$-polynomials and their local $h$-polynomials for well-studied subdivisions in algebra and topology relate closely to generalizations of $A_n(z)$ and $d_n(z)$ \cite{A16}.  
In Ehrhart theory, $A_n(z)$ is generalized by the {$s$-Eulerian polynomials}, which are the $h^\ast$-polynomials of the {$s$-lecture hall simplices} \cite{SS12}.  
We show here that the local $h^\ast$-polynomials of the $s$-lecture hall simplices analogously generalize $d_n(z)$.  
We call these polynomials the $s$-derangement polynomials, and show that they are both real-rooted and symmetric.

Local $h$-polynomials were introduced in \cite{S92} so as to create a parallel to a useful theorem concerning local $h^\ast$-polynomials and their $h^\ast$-polynomials \cite{BM85}.  
It is therefore natural to ask when these two theories intersect, and when this intersection can be used to answer questions in one context via methods from the other.  
One main contribution of this paper is to show that many of the well-studied local $h$-polynomials for subdivisions of simplices are realized as local $h^\ast$-polynomials of $s$-lecture hall simplices; i.e.,~as $s$-derangement polynomials.  
In doing so, we also show that the $s$-derangement polynomials generalize other well-studied derangement polynomials.  
We apply these results to answer some open questions, some from each of the two intersecting theories, on real-rootedness and unimodality of generating polynomials.

In Section~\ref{sec: preliminaries} we recall the details of local $h$-polynomials, local $h^\ast$-polynomials, their parallel story, and their applications.  
In Section~\ref{sec: s-derangement polynomials}, we prove that the local $h^\ast$-polynomials of the $s$-lecture hall simplices (the $s$-derangement polynomials) are always symmetric and real-rooted.  
In Section~\ref{sec: examples}, we show that the $s$-derangement polynomials generalize $d_n(z)$, as well as other derangement polynomials, and we show that the key examples of local $h$-polynomials can be realized as $s$-derangement polynomials.  
In Section~\ref{sec: applications}, we use these results to settle some conjectures on the real-rootedness of certain local $h$-polynomials and related generating polynomials.  
We then prove that the family of $s$-lecture hall order polytopes \cite{BL16}, which generalize the well-studied order polytopes \cite{S86}, admit a box unimodal triangulation \cite{SV13}.  
We recover from this that all reflexive $s$-lecture hall order polytopes have unimodal $h^\ast$-polynomials, and we use this fact to partially answer a conjecture posed in \cite{BL16}.

\section{Local $h$-polynomials and local $h^\ast$-polynomials}
\label{sec: preliminaries}
In this section, we summarize a portion of the theory of subdivisions and local $h$-polynomials, and the analogous story for local $h^\ast$-polynomials in Ehrhart theory.  
The connections outlined in this section motivate results in the later sections of the paper where we explicitly relate families of local $h$-polynomials to local $h^\ast$-polynomials.  

\subsection{Subdivisions and local $h$-polynomials}
\label{subsec: subdivisions and local h-polynomials}
Let $\Omega$ be a $(n-1)$-dimensional (abstract) simplicial complex, let $f_i$ denote the number of faces of $\Omega$ of dimension $i$ for $i\in[n-1]_0$ and $f_{-1}:=1$ when $\Omega\neq\emptyset$.
The {\bf $f$-polynomial} of $\Omega$ is defined as
\[
f(\Omega;z) := \sum_{i=0}^{n}f_{i-1}z^{i}.
\]
In algebraic combinatorics, researchers sometimes find it easier to work with the {\bf $h$-polynomial} of $\Omega$, which is defined as
\[
h(\Omega;z)  = \sum_{i=0}^nh_iz^i := (1-z)^{n}f\left(\Omega\,;\frac{z}{1-z}\right).
\]
The $h$-polynomial is known to have only nonnegative coefficients when $\Omega$ is {\bf Cohen-Macaulay} \cite{S07}; for example, when $\Omega$ is a homology ball or sphere.
It is also symmetric with respect to degree $n$ whenever $\Omega$ is a homology sphere. 
For a complete discussion of this topic, and for all unknown definitions, we refer the reader to \cite{S07}.  

A {\bf topological subdivision} of $\Omega$ is a simplicial complex $\Omega^\prime$ such that each simplex $\Delta\in\Omega$ is subdivided into a ball by simplices in $\Omega^\prime$ so that the boundary this ball is a subdivision of the boundary of $\Delta$.  
It is further called {\bf geometric} if both $\Omega$ and $\Omega^\prime$ admit geometric realizations, $\Sigma$ and $\Sigma^\prime$, respectively, (i.e.~with each simplex realized as a convex simplex in real-Euclidean space) such that the vertices of $\Sigma$ are a subset of the vertices of $\Sigma^\prime$ and each face of $\Sigma^\prime$ is contained in a face of $\Sigma$. 
In between geometric and topological subdivisions we also have {\bf quasi-geometric subdivisions}.
These are the topological subdivisions $\Omega^\prime$ of $\Omega$ such that no simplex in $\Omega^\prime$ has all of its vertices in a face of smaller dimension in $\Omega$. 
Given a subdivision $\Omega^\prime$ of $\Omega$, we may often refer to the associated inclusion map $\varphi: \Omega^\prime\longrightarrow\Omega$.  
We refer the reader to \cite{A16} for more details on the various types of subdivisions.  

For polynomials $p(z) = p_0+p_1z+\cdots+p_dz^d$ and $q(z) = q_0+q_1z+\cdots+q_mz^m$, we write $p(z)\leq q(z)$ if $p_i\leq q_i$ for all $i\geq 0$.  
The statement $p(z)\leq q(z)$ is referred to as a {\bf monotonicity property}.
One natural question, asked by Kalai and Stanley, is how does the $h$-polynomial of a Cohen-Macaulay simplicial complex change when it is subdivided by another simplicial complex? 
In particular, Kalai and Stanley asked the following: if $\Omega^\prime$ is a subdivision of a Cohen-Macaulay simplicial complex $\Omega$, does it follow that $h(\Omega;z)\leq h(\Omega^\prime;z)$?  

To affirmatively answer this question in the case of quasi-geometric subdivisions of Cohen-Macaulay complexes, Stanley introduced the notion of a local $h$-polynomial in \cite{S92}.  
Let $2^V$ denote the abstract $(n-1)$-dimensional simplicial complex consisting of all subsets of a set $V$ with $|V| = n$, and let $\Omega$ be a subdivision of $2^V$ with associated map $\varphi:\Omega\longrightarrow2^V$.  
For a face $F\in 2^V$, we let $\Omega_F:=\varphi^{-1}(2^F)$.
The {\bf local $h$-polynomial} $\ell_V(\Omega_V;z)$ of $2^V$ is then defined by the relation
\begin{equation}
\label{eqn: local}
h(\Omega;z) = \sum_{F\subset V}\ell_F(\Omega_F;z).
\end{equation}
It is important to note that since equation~\eqref{eqn: local} holds for all subdivisions of all simplices then the Principle of Inclusion-Exclusion implies that
\[
\ell_V(\Omega_V;z) = \sum_{F\subset V}(-1)^{n-|F|}h(\Omega_F;z),
\]
and hence equation~\eqref{eqn: local} does in fact determine the local invariant $\ell_V(\Omega_V;z)$.  

If $\Delta\in \Omega$, we define the {\bf link} of $\Delta$ in $\Omega$ to be the collection of all simplices in $\Omega$ that are disjoint from $\Delta$ but are contained in a face of $\Omega$ that also contains $\Delta$; i.e., 
\[
\link_\Omega(\Delta) :=
\{
\sigma\in \Omega 
\, : \,
\sigma\cap\Delta = \emptyset, 
\mbox{ and there exists $\Delta^\prime\in \Omega$ such that $\Delta,\sigma\subset\Delta^\prime$}
\}.
\]
To prove the desired monotonicity property, Stanley showed that $\ell_V(\Omega_V;z)$ has only nonnegative integer coefficients when $\Omega$ is a quasi-geometric subdivision of $2^V$, and he proved the following theorem.  
\begin{theorem}
\label{thm: local decomposition}
\cite[Theorem 3.2]{S92}
Let $\Omega$ be a pure $(n-1)$-dimensional simplicial complex and let $\Omega^\prime$ be a simplicial subdivision of $\Omega$.  Then
\[
h(\Omega^\prime;z) = \sum_{\Delta\in\Omega}h(\link_\Omega(\Delta);z)\ell_\Delta(\Omega_\Delta^\prime;z).
\]
\end{theorem}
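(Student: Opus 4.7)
The plan is to decompose $h(\Omega';z)$ according to the faces of $\Omega$ serving as carriers for the simplices of $\Omega'$, introduce an auxiliary interior polynomial for each carrier, and then re-express these interior polynomials in terms of the local $h$-polynomials $\ell_\Delta$. Starting from the standard expansion $h(\Omega';z) = \sum_{\sigma \in \Omega'} z^{|\sigma|}(1-z)^{n-|\sigma|}$, I would partition the sum according to $\Delta := \varphi(\sigma) \in \Omega$, the minimal face of $\Omega$ for which $\sigma \in \Omega'_\Delta$. Factoring out $(1-z)^{n-|\Delta|}$ from each block leaves the \emph{interior polynomial}
\[
\iota_\Delta(z) := \sum_{\sigma \in \Omega',\, \varphi(\sigma) = \Delta} z^{|\sigma|}(1-z)^{|\Delta|-|\sigma|},
\]
so that $h(\Omega';z) = \sum_{\Delta \in \Omega}(1-z)^{n-|\Delta|}\iota_\Delta(z)$.

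Next, I would apply the same partitioning to $h(\Omega'_\Delta;z)$ for each $\Delta \in \Omega$, using that $\Omega'_\Delta$ is a subdivision of the simplex $2^\Delta$, to obtain $h(\Omega'_\Delta;z) = \sum_{F \subseteq \Delta}(1-z)^{|\Delta|-|F|}\iota_F(z)$. Comparing with the defining identity~\eqref{eqn: local} and inverting the resulting triangular system on the Boolean lattice of subsets of $\Delta$ (via M\"obius inversion together with the binomial identity $\sum_{j=0}^{a}\binom{a}{j}(-1)^{a-j}(1-z)^j = (-z)^a$) should yield the key relation
\[
\iota_\Delta(z) = \sum_{G \subseteq \Delta} z^{|\Delta|-|G|}\ell_G(\Omega'_G;z).
\]

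Substituting this back into the first display and swapping the order of summation gives
\[
h(\Omega';z) = \sum_{G \in \Omega} \ell_G(\Omega'_G;z) \sum_{\Delta \in \Omega,\, \Delta \supseteq G}(1-z)^{n-|\Delta|} z^{|\Delta|-|G|},
\]
and I would conclude by recognizing the inner sum as $h(\link_\Omega(G);z)$: the map $\sigma \mapsto \sigma \cup G$ identifies the faces of $\link_\Omega(G)$ with the faces $\Delta \in \Omega$ containing $G$, and purity of $\Omega$ forces $\link_\Omega(G)$ to be pure of dimension $n-1-|G|$, so the inner sum is precisely the $(1-z)^{n-|G|} f(\link_\Omega(G);\, z/(1-z))$ form of $h(\link_\Omega(G);z)$. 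The main obstacle I anticipate is cleanly setting up and inverting the triangular relation between $\iota_\Delta$ and $\ell_\Delta$; once that identity is in hand, the identification of the inner sum with the link's $h$-polynomial is a routine application of the $f$-to-$h$ transformation.
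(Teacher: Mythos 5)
Your argument is correct. Note that the paper itself offers no proof of this statement -- it is quoted directly from \cite[Theorem 3.2]{S92} -- so the only meaningful comparison is with Stanley's original argument, and yours is essentially that argument: partition the faces of $\Omega^\prime$ by their carrier $\varphi(\sigma)$, package each block into the ``interior'' polynomial $\iota_\Delta$, invert the triangular relation $h(\Omega^\prime_\Delta;z)=\sum_{F\subseteq\Delta}(1-z)^{|\Delta|-|F|}\iota_F(z)$ against the defining relation~\eqref{eqn: local} to get $\iota_\Delta(z)=\sum_{G\subseteq\Delta}z^{|\Delta|-|G|}\ell_G(\Omega^\prime_G;z)$, and recognize the resulting inner sum $\sum_{\Delta\supseteq G}(1-z)^{n-|\Delta|}z^{|\Delta|-|G|}$ as $h(\link_\Omega(G);z)$. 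All the steps check out: the inversion is justified because the system is unitriangular on each Boolean lattice $2^\Delta$ and the candidate solution satisfies it (via $\sum_j\binom{a}{j}(1-z)^{a-j}z^j=1$), the polynomial $\iota_F$ is intrinsic to $F$ so the same quantity appears in every expansion, and you correctly locate the one use of purity -- it guarantees every maximal face of $\Omega$ containing $G$ is a facet, so $\link_\Omega(G)$ has dimension exactly $n-1-|G|$ and the inner sum is $h(\link_\Omega(G);z)$ on the nose rather than a $(1-z)$-multiple of it.
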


Since the coefficients of $\ell_V(\Omega_V;z)$ are nonnegative integers whenever $\Omega$ is a quasi-geometric subdivision, researchers have since investigated combinatorial interpretations of these coefficients for common subdivisions used in topology and algebra.  
For example, if $\Omega$ is the {\bf barycentric subdivision} of the $(n-1)$-simplex $2^V$ where $|V| = n$, then it turns out that
$
h(\Omega;z) = A_n(z), 
$
and 
$
\ell_V(\Omega_V;z) = d_n(z)
$
\cite{S92}.
In the next subsection, we see that the definition of a local $h$-polynomial finds its motivation in an analogous construction for $h^\ast$-polyomials of lattice polytopes, called local $h^\ast$-polynomials.  

\subsection{Some Ehrhart theory}
\label{subsec: some ehrhart theory}
Let $P\subset\R^n$ be a $d$-dimensional convex lattice polytope; i.e.~a convex polytope all of whose vertices lie in the lattice $\Z^n$ such that the points within $P$ span a $d$-dimensional affine subspace in $\R^n$.  
Throughout the paper, we will let $\verts(P)$ denote the set of vertices of the polytope $P$.
For $t\in\Z_{\geq0}$ we call $tP:=\{tp:p\in P\}$ the {\bf $t^{th}$ dilate} of $P$, and we call the generating function
\[
\Ehr_P(z) :=\sum_{t\geq0}|tP\cap\Z^n|z^t = \frac{h_0^\ast+h_1^\ast z+\cdots+h_d^\ast z^d}{(1-z)^{d+1}}
\]
the {\bf Ehrhart Series} of $P$.  
The polynomial $h^\ast(P;z) = h_0^\ast+h_1^\ast z+\cdots+h_d^\ast z^d$ is the {\bf (Ehrhart) $h^\ast$-polynomial} of $P$, and in \cite{S80}, it was shown that $h_0^\ast,\ldots,h_d^\ast\in\Z_{\geq0}$.  
Since the coefficients of $h^\ast(P;z)$ are all nonnegative integers, it is currently popular to investigate various combinatorial interpretations of these coefficients in terms of the polytope $P$, and then study their associated distributional properties.  
In the case that the lattice polytope $P$ is a $d$-simplex, say 
\[ 
P = \conv(v^{(0)},\ldots,v^{(d)})\subset \R^n,
\]
for $d+1$ affinely independent points $v^{(0)},\ldots,v^{(d)}$, then $h^\ast(P;z)$ has a well-known combinatorial interpretation in terms of the lattice points in the {\bf half-open parallelpiped} of $P$.  
This is the convex body 
\[
\Pi_P :=
\left\{
\sum_{i=0}^d\lambda_i(v_i,1)\in\R^{n+1} 
\,:\,
0\leq \lambda_i< 1, \, i\in[d]_0
\right\}.
\]
Notice here, that we embedded $P$ into $\R^{n+1}$ within the hyperplane defined by $x_{n+1} = 1$.  
We then have the following well-known combinatorial interpretation of $h^\ast(P;z)$, a proof of which can be found in \cite[Chapter 3]{BR07}.  
\begin{lemma}
\cite[Corollary 3.11]{BR07}
Let $P$ be a lattice $d$-simplex with half-open parallelpiped $\Pi_P$.  
Then 
\[
h^\ast(P;z) = \sum_{(x_1,\ldots,x_{n+1})\in\Pi_P\cap\Z^{n+1}}z^{x_{n+1}}.
\]
\end{lemma}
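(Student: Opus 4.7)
The plan is to evaluate the Ehrhart series by way of the cone over $P$. Lift $P$ to the hyperplane $x_{n+1}=1$ in $\R^{n+1}$ and form the rational cone
\[
C := \left\{ r_0 w_0 + \cdots + r_d w_d : r_i \in \R_{\geq 0}\right\}, \qquad w_i := (v^{(i)}, 1).
\]
Because the $v^{(i)}$ are affinely independent, the lifted vectors $w_0, \ldots, w_d$ are linearly independent, so each point of $C$ has a unique representation as a nonnegative real combination of the $w_i$'s. Moreover, the slice of $C$ at height $x_{n+1}=t$ is exactly $tP$ (sitting in $\R^n\times\{t\}$), and therefore
\[
\sum_{x \in C \cap \Z^{n+1}} z^{x_{n+1}} \;=\; \sum_{t\geq 0}|tP\cap \Z^n|\,z^t \;=\; \mathrm{Ehr}_P(z).
\]

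Next I would establish the key bijection: every lattice point $x \in C \cap \Z^{n+1}$ has a unique decomposition
\[
x \;=\; \pi + \sum_{i=0}^d n_i w_i, \qquad \pi \in \Pi_P \cap \Z^{n+1}, \quad n_i \in \Z_{\geq 0}.
\]
To see this, write $x=\sum r_i w_i$ uniquely with $r_i \geq 0$, and split each coefficient uniquely as $r_i = n_i + \lambda_i$ with $n_i \in \Z_{\geq 0}$ and $\lambda_i \in [0,1)$. Setting $\pi := \sum \lambda_i w_i$, the point $\pi$ lies in $\Pi_P$ by definition, and since the $w_i$ are integer vectors, $\pi = x - \sum n_i w_i \in \Z^{n+1}$. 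Conversely, any $\pi \in \Pi_P \cap \Z^{n+1}$ and any nonnegative integer choices $n_i$ yield a lattice point of $C$, and the uniqueness of the real combination in $C$ gives uniqueness of the decomposition.

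Because the last coordinate of each $w_i$ is $1$, the $z$-weighted count factors:
\[
\mathrm{Ehr}_P(z) \;=\; \left(\sum_{\pi \in \Pi_P\cap \Z^{n+1}} z^{\pi_{n+1}}\right)\cdot \prod_{i=0}^d \sum_{n_i\geq 0} z^{n_i} \;=\; \frac{1}{(1-z)^{d+1}}\sum_{\pi \in \Pi_P\cap \Z^{n+1}} z^{\pi_{n+1}}.
\]
Multiplying both sides by $(1-z)^{d+1}$ and comparing with the definition of $h^\ast(P;z)$ yields the claim. The only delicate point in this argument is the uniqueness of the fractional-integer split inside $C$, which is precisely where the hypothesis that $P$ is a simplex (and not a general lattice polytope) enters: without linear independence of the $w_i$, such a clean "fundamental parallelepiped" decomposition fails. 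No deep ingredients beyond this combinatorial tiling of $C$ are required.
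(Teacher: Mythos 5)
Your argument is correct and is precisely the standard cone-and-fundamental-parallelepiped proof that the paper itself defers to (it cites \cite[Chapter 3]{BR07} rather than proving the lemma): lift to the cone over $P$ at height one, tile its lattice points by $\Z_{\geq 0}$-translates of $\Pi_P\cap\Z^{n+1}$ using linear independence of the lifted generators, and factor the generating function to pull out $(1-z)^{-(d+1)}$. No gaps; the uniqueness of the fractional--integral split is exactly the point you flag, and it is where simpliciality is used.
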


When $P$ is not a simplex, identifying useful interpretations of the coefficients of its $h^\ast$-polynomial becomes more challenging.  
One way to better understand the structure of the $h^\ast$-polynomial for an arbitrary polytope is to decompose it via a triangulation.  
We call a geometric realization $T$ of a simplicial complex a {\bf (lattice) triangulation} of a subset $S\subset\R^n$ if the union over all faces of $T$ is $S$ and all $0$-dimensional faces of $T$ are in $\Z^n$.  
Given a triangulation $T$ of a lattice polytope $P\subset\R^n$, our goal is then to compute $h^\ast(P;z)$ by counting lattice points in the relative interiors of the dilates of each simplex in $T$.  
To this end, for a lattice $d$-simplex $\Delta\subset\R^n$ we define the {\bf open-parallelpiped}
\[
\Pi_P^\circ :=
\left\{
\sum_{i=0}^d\lambda_i(v_i,1)\in\R^{n+1} 
\,:\,
0< \lambda_i< 1, \, i\in[d]_0
\right\},
\]
and the polynomial 
\[
\ell^\ast(\Delta;z) :=\sum_{(x_1,\ldots,x_{n+1})\in\Pi_P\cap\Z^{n+1}}z^{x_{n+1}}.
\]
The polynomial $\ell^\ast(\Delta;z)$ is called the {\bf local $h^\ast$-polynomial} of $\Delta$ in \cite{KS16} and the {\bf box polynomial} of $\Delta$ in \cite{BR07,B16,SV13}.  
It allows us to decompose the $h^\ast$-polynomial of a polytope as in the following theorem, from which we also see its close relation to the local $h$-polynomial of a simplex.  
\begin{theorem}
\label{thm: betke and mcmullen}
\cite{BM85}
Let $P$ be a lattice polytope with lattice triangulation $T$.  Then
\[
h^\ast(P;z) = \sum_{\Delta\in T}h(\link_T(\Delta);z)\ell^\ast(\Delta;z).
\]
\end{theorem}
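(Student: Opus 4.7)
The plan is to compute $\Ehr_P(z)$ by partitioning each dilate $tP$ into the relative interiors of the simplices of $T$, express the interior enumerator of each simplex as a combination of box polynomials of its faces via a cone decomposition, and then swap the order of summation to recognize the resulting coefficients as $h$-polynomials of links.

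First I would use that for every $t\geq 1$, each lattice point of $tP$ lies in $t\cdot\operatorname{relint}(\Delta)$ for a unique nonempty $\Delta\in T$, so with $E_\Delta(z):=\sum_{t\geq 1}|t\cdot\operatorname{relint}(\Delta)\cap\Z^n|z^t$ we obtain $\Ehr_P(z)=1+\sum_{\emptyset\neq\Delta\in T}E_\Delta(z)$. For a $k$-simplex $\Delta=\conv(v_0,\ldots,v_k)$, I would decompose the open cone $\{\sum_i\lambda_i(v_i,1):\lambda_i>0\}$ into $\Z_{\geq 0}^{k+1}$-many translates of the \emph{upper} parallelepiped $U_\Delta:=\{\sum_i\mu_i(v_i,1):\mu_i\in(0,1]\}$ via the unique writing $\lambda_i=m_i+\mu_i$ with $m_i\in\Z_{\geq 0}$ and $\mu_i\in(0,1]$, and then further partition $U_\Delta$ by the subset $S\subseteq\{0,\ldots,k\}$ on which $\mu_i=1$, identifying each piece with a lattice translate of the open parallelepiped $\Pi_{F_S}^\circ$ of the face $F_S=\conv(v_i:i\notin S)$ at height shift $|S|$. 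Combining these two decompositions gives
\[
E_\Delta(z)=\frac{1}{(1-z)^{k+1}}\sum_{F\leq\Delta}z^{k-\dim F}\,\ell^\ast(F;z).
\]

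Substituting into the Ehrhart-series identity, multiplying through by $(1-z)^{d+1}$ where $d=\dim P$, and swapping the order of summation yields
\[
h^\ast(P;z)-(1-z)^{d+1}=\sum_{F\in T}\ell^\ast(F;z)\sum_{\substack{\Delta\supseteq F\\ \Delta\neq\emptyset}}(1-z)^{d-\dim\Delta}z^{\dim\Delta-\dim F}.
\]
Reindexing the inner sum via $\Delta=F\cup\sigma$ with $\sigma\in\link_T(F)$ and applying the defining formula $h(\Sigma;z)=(1-z)^{\dim\Sigma+1}f(\Sigma;z/(1-z))$ identifies this sum as $h(\link_T(F);z)$ when $F\neq\emptyset$, and as $h(T;z)-(1-z)^{d+1}$ when $F=\emptyset$, the discrepancy coming from the missing $\Delta=\emptyset$ term. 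Using $\ell^\ast(\emptyset;z)=1$ and $\link_T(\emptyset)=T$, the stray $(1-z)^{d+1}$ cancels precisely against this missing term, leaving the claimed identity.

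The main obstacle is the two-step parallelepiped decomposition in the first step: first decomposing the open cone over $\Delta$ into translates of $U_\Delta$, then decomposing $U_\Delta$ into translates of the open parallelepipeds of the various faces $F\leq\Delta$ (including $F=\emptyset$). The bookkeeping is delicate because each face must appear with the correct height shift $|\Delta\setminus F|=\dim\Delta-\dim F$. Once this is set up, the rest of the argument is straightforward manipulation of generating functions, but care is still required at the $F=\emptyset$ endpoint so that the leftover $(1-z)^{d+1}$ from the trivial dilate $t=0$ is absorbed correctly by the missing-$\emptyset$ piece of $h(T;z)$.
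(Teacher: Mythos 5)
The paper does not prove this statement; it is quoted directly from Betke--McMullen \cite{BM85}, so there is no internal proof to compare against. Your argument is correct and is essentially the classical proof of that result: the cone over each simplex is tiled by $\Z_{\geq 0}^{k+1}$-translates of a half-open parallelepiped, the half-open parallelepiped is stratified by the set $S$ of coordinates with $\mu_i=1$ into lattice translates of the open parallelepipeds $\Pi^\circ_{F_S}$ with height shift $|S|=\dim\Delta-\dim F_S$, and the inner sum over $\Delta\supseteq F$ telescopes into $h(\link_T(F);z)$ via the bijection $\Delta\mapsto\Delta\setminus F$ with $\link_T(F)$ and the identity $\dim\Delta=\dim F+\dim\sigma+1$. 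I checked the endpoint bookkeeping: the $F=\emptyset$ inner sum misses only the $\Delta=\emptyset$ term of $h(T;z)$, which is $(1-z)^{d+1}$, and this exactly cancels the $t=0$ contribution to the Ehrhart series, as you say. Two small points worth making explicit if you write this up: the links $\link_T(F)$ are pure of dimension $d-\dim F-1$ because every face of a triangulation of a $d$-polytope lies in a $d$-dimensional face (this is needed to apply the definition $h(\Sigma;z)=(1-z)^{\dim\Sigma+1}f(\Sigma;z/(1-z))$ with the correct exponent), and the paper's displayed definition of $\ell^\ast(\Delta;z)$ contains a typo (it sums over $\Pi_P$ rather than $\Pi_P^\circ$); your proof uses the intended open parallelepiped, which is the correct convention.
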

Since the polynomials $h(\link_T(\Delta);z)$ and $\ell^\ast(\Delta;z)$ will always have nonnegative coefficients, then one can use Theorem~\ref{thm: betke and mcmullen} to deduce the monotonicity property $h^\ast(Q;z)\leq h^\ast(P;z)$ whenever $Q\subset P$.  
In this way, we see that Theorem~\ref{thm: local decomposition} was a natural approach for Stanley to prove the same monotonicity property for $h$-polynomials in the setting of quasi-geometric subdivisions of Cohen-Macaulay simplicial complexes \cite{S92}.  

Given the strong similarities between Theorems~\ref{thm: local decomposition} and~\ref{thm: betke and mcmullen}, it is natural to ask how local $h$-polynomials and local $h^\ast$-polynomials can relate as combinatorial generating functions, especially when their $h$- and $h^\ast$-polynomials are enumerating similar combinatorial objects.  
In particular, we consider the following question:
\begin{question}
\label{quest: local relations}
Suppose the $h$-polynomials of a family of topological subdivisions $\mathcal{F}$ are enumerating similar combinatorial objects to the $h^\ast$-polynomials of a family of lattice simplices $\mathcal{S}$.  
Given a local $h$-polynomial $\ell_V(\Omega_V;z)$ for $\Omega\in\mathcal{F}$ can we find a lattice simplex $S\in\mathcal{S}$ such that
$
\ell^\ast(S;z) = \ell_V(\Omega_V;z)?
$
\end{question}
One main contribution of this paper is to affirmatively answer Question~\ref{quest: local relations} when the $h$- and $h^\ast$-polynomials are enumerating descents statistics for inversion sequences, a family of combinatorial objects generalizing $\mathfrak{S}_n$.
As we will see in Section~\ref{sec: applications}, a well-chosen answer to Question~\ref{quest: local relations} can help us settle conjectures about the given local $h$-polynomial.

More recently, there has also been a strong interest as to when Theorem~\ref{thm: betke and mcmullen} can be used to prove unimodality for $h^\ast$-polynomials \cite{B16,SV13}; specifically in the case of reflexive polytopes.  
A $d$-dimensional lattice polytope $P$ is {\bf reflexive} (up to unimodular transformation) if and only if $h^\ast(P;z)$ is symmetric with respect to degree $d$ \cite{H92}.  
For a reflexive polytope $P$, Theorem~\ref{thm: betke and mcmullen} reduces to a statement about a triangulation of the boundary of $P$, which we denote by $\partial P$.  
\begin{theorem}
\label{thm: reflexive}
\cite{BM85}
Let $P$ be a reflexive lattice polytope, and let $T$ be a triangulation of its boundary, $\partial P$.  Then
\[
h^\ast(P;z) = \sum_{\Delta\in T}h(\link_T(\Delta);z)\ell^\ast(\Delta;z).
\]
\end{theorem}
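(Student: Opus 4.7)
The plan is to reduce Theorem~\ref{thm: reflexive} to Theorem~\ref{thm: betke and mcmullen} by extending the boundary triangulation $T$ to a lattice triangulation $T'$ of all of $P$ for which the extra simplices contribute nothing to the formula. After a unimodular transformation, reflexivity of $P$ is equivalent to the origin lying in the interior of $P$ together with each facet $F$ of $P$ having a supporting hyperplane of the form $\langle a_F, x\rangle = 1$ with $a_F \in \Z^n$. In particular $P$ is the cone from $0$ over $\partial P$, so
\[
T' := T \cup \{\hat{\Delta} : \Delta \in T\},\qquad \hat{\Delta} := \conv(\Delta \cup \{0\}),
\]
is a lattice triangulation of $P$; here $\hat{\emptyset} = \{0\}$ and otherwise $\hat{\Delta}$ is a lattice simplex of dimension $\dim \Delta + 1$.

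Applying Theorem~\ref{thm: betke and mcmullen} to $T'$ and splitting the resulting sum into contributions from $\Delta \in T$ and from $\hat{\Delta}$, it suffices to verify two claims. First, for every $\Delta \in T$ one has $h(\link_{T'}(\Delta); z) = h(\link_T(\Delta); z)$: indeed, $\link_{T'}(\Delta)$ is the abstract simplicial cone of $\link_T(\Delta)$ with apex $0$, and the identity $f(\operatorname{cone}(\Omega); z) = (1+z)\, f(\Omega; z)$ immediately gives $h(\operatorname{cone}(\Omega); z) = h(\Omega; z)$ after the $h$-transformation absorbs the dimension shift.

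The key step is the vanishing $\ell^\ast(\hat{\Delta}; z) = 0$ for every $\Delta \in T$, which is precisely where reflexivity is essential. For nonempty $\Delta \in T$, the simplex lies in some facet $F$, so its vertices $v_0, \ldots, v_k$ all satisfy $\langle a_F, v_i\rangle = 1$. A lattice point of $\Pi_{\hat{\Delta}}^\circ$ has the form $\sum_{i=0}^k \lambda_i (v_i, 1) + \lambda_{k+1}(0,1)$ with $0 < \lambda_i < 1$; pairing $a_F$ with the integral first $n$ coordinates forces
\[
\sum_{i=0}^k \lambda_i \;=\; \bigl\langle a_F, \textstyle\sum_{i=0}^k \lambda_i v_i\bigr\rangle \;\in\; \Z,
\]
after which integrality of the $(n{+}1)$-st coordinate $\sum_{i=0}^{k+1}\lambda_i$ demands $\lambda_{k+1} \in \Z$, contradicting $0 < \lambda_{k+1} < 1$. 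The edge case $\hat{\emptyset} = \{0\}$ is handled directly: a single-vertex lattice simplex at the origin has no lattice points in its open parallelepiped. Hence the second sum vanishes and Theorem~\ref{thm: betke and mcmullen} applied to $T'$ collapses exactly to the assertion of Theorem~\ref{thm: reflexive}. The main obstacle is this vanishing step; the remainder is standard cone-and-link bookkeeping.
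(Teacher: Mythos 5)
Your proof is correct. The paper offers no argument for this statement---it is simply quoted from Betke--McMullen---so there is nothing internal to compare against; your reduction to Theorem~\ref{thm: betke and mcmullen} by coning $T$ over the origin is the standard derivation, and all three steps check out: $T'$ is a lattice triangulation of $P$ because $0$ is an interior lattice point, the links in $T'$ of faces of $T$ are cones over their links in $T$ and so have the same $h$-polynomial, and the pairing with the integral facet normals $a_F$ (together with integrality of the last coordinate) correctly forces $\Pi^\circ_{\hat{\Delta}}\cap\Z^{n+1}=\emptyset$, i.e.\ $\ell^\ast(\hat{\Delta};z)=0$, which is exactly where reflexivity enters.
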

In the case when the lattice triangulation $T$ of $\partial P$ is a {\bf regular triangulation} \cite[Definition 2.2.10]{DRS10}, then the polynomials $h(\link_T(\Delta);z)$ are all symmetric and unimodal of degree $d-\dim(\Delta)-1$ (see for instance \cite[Lemma 2.9]{S09}).  
It is an exercise to check that $\ell^\ast(\Delta;z)$ is always symmetric with respect to degree $\dim(\Delta)+1$. 
Thus, if a reflexive polytope $P$ admits a boundary triangulation whose simplices all have unimodal box polynomials then $h^\ast(P;z)$ is also unimodal.
In \cite{SV13}, such a triangulation is referred as a box unimodal triangulation.
\begin{definition}
\label{def: box unimodal}
A lattice triangulation of a subset $S\subset\R^n$ is called {\bf box unimodal} if it is regular and every $\Delta\in T$ has a unimodal box polynomial.
\end{definition}
In relation to unimodality conjectures for $h^\ast$-polynomials for reflexive polytopes, \cite{B16,SV13} asked which families of well-studied lattice polytopes do (or do not) admit a box unimodal triangulation.
A second main contribution of this paper is to prove that the well-studied family of $s$-lecture hall simplices \cite{BBKSZa,BBKSZb,BL16,KO17,S16,SS12,SV15} all have real-rooted, and thus unimodal, local $h^\ast$-polynomials (see Section~\ref{sec: s-derangement polynomials}).  
We then apply this result to prove that a family of lattice polytopes simultaneously generalizing $s$-lecture hall simplices and order polytopes \cite{S86}, called the $s$-lecture hall order polytopes \cite{BL16}, admit a box unimodal triangulation.  
As a consequence we recover unimodality results that allow us to provide a partial answer to a conjecture posed in \cite{BL16} (see Section~\ref{sec: applications}).  

\section{s-Derangement Polynomials}
\label{sec: s-derangement polynomials}
In this section, we prove that the local $h^\ast$-polynomials of the $s$-lecture hall simplices are all real-rooted, analogous to their $h^\ast$-polynomials.
Given a sequence of positive integers $s = (s_1,\ldots,s_n)$, we define the set of {\bf $s$-inversion sequences} of length $n$ to be 
\[
I_n^s :=
\{
(e_1,\ldots,e_n) \in\Z^n
\,:\,
0\leq e_i<s_i, \, 0\leq i\leq n
\}.
\]
We also let 
\[
\widetilde{s} := (s_0:=1,s_1,\ldots,s_n,s_{n+1} := 1),
\]
and note that $I_{n+2}^{\widetilde{s}}$ is isomorphic to $I_n^s$ via projection along the first and last coordinate.  
Thus, given an inversion sequence $(e_1,\ldots, e_n)\in I_n^s$, we may naturally assume that $e_0 = e_{n+1} =0$ and $s_0 = s_{n+1} = 1$. 

For an inversion sequence $e = (e_1,\ldots, e_n)\in I_n^s$, we say that an index $i \in[n]_0$ is an {\bf ascent} if $\frac{e_i}{s_i}<\frac{e_{i+1}}{s_{i+1}}$, and we denote the number of ascents in $e$ by $\asc(e)$. 
Similarly, we say that an index $i \in[n]_0$ is a {\bf descent} if $\frac{e_i}{s_i}>\frac{e_{i+1}}{s_{i+1}}$, and we denote the number of descents in $e$ by $\des(e)$. 
The {\bf $s$-Eulerian polynomial} is defined as
\[
E_n^s(z) :=\sum_{e\in I_n^s}z^{\asc(e)}.  
\]
One important feature of $E_n^s(z)$ that we will use in the following sections is that ascents and descents are equidistributed over $I_n^s$.  
\begin{theorem}
\label{thm: equidistributed}
For any sequence of positive integers $s = (s_1,\ldots, s_n)$ we have that
\[
E_n^s(z) = \sum_{e\in I_n^s}z^{\asc(e)} = \sum_{e\in I_n^s}z^{\des(e)}.
\]
\end{theorem}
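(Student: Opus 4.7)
The plan is to prove the identity by constructing an explicit bijection $\phi: I_n^s \to I_n^s$ that swaps ascents and descents. A natural candidate is the componentwise involution
\[
\phi(e)_i := (s_i - e_i) \bmod s_i, \qquad i \in [n],
\]
with boundary values $\phi(e)_0 = \phi(e)_{n+1} = 0$ forced by the convention $s_0 = s_{n+1} = 1$. The map is clearly a well-defined involution on $I_n^s$: it fixes $0$ and sends $e_i \mapsto s_i - e_i$ whenever $e_i > 0$.

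The key observation is that $\phi$ replaces the ratio $r_i := e_i/s_i$ by $r_i' = 1 - r_i$ when $e_i > 0$ and by $r_i' = 0$ when $e_i = 0$. I would then analyze each position $i \in [n]_0$ via a case split on whether $e_i$ and $e_{i+1}$ are zero or positive. When both are zero, $i$ is a tie in both $e$ and $\phi(e)$. When $e_i = 0$ and $e_{i+1} > 0$, position $i$ is an ascent in both $e$ and $\phi(e)$, since $r_{i+1}' = 1 - r_{i+1} > 0$; symmetrically, when $e_i > 0$ and $e_{i+1} = 0$, position $i$ is a descent in both. When both entries are positive, the inequality $r_i < r_{i+1}$ is equivalent to $1 - r_i > 1 - r_{i+1}$, so the type of $i$ is swapped by $\phi$.

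Writing $B_1(e)$ and $B_2(e)$ for the numbers of $0\to+$ and $+\to 0$ transitions among the pairs $(e_i, e_{i+1})$, and $c_{\rm des}(e)$ for the number of positions $i$ where both $e_i$ and $e_{i+1}$ are positive and $r_i > r_{i+1}$, the case analysis yields
\[
\asc(\phi(e)) = B_1(e) + c_{\rm des}(e),
\qquad
\des(e) = B_2(e) + c_{\rm des}(e).
\]
The theorem therefore reduces to $B_1(e) = B_2(e)$. But this is immediate from the boundary condition $e_0 = e_{n+1} = 0$: in the augmented sequence $(e_0, e_1, \ldots, e_{n+1})$, which starts and ends at zero, the transitions between zero and positive values must alternate along the sequence, so their counts coincide.

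The main obstacle is spotting the right involution; once $\phi$ is in hand, the proof reduces to routine case analysis plus the elementary alternation argument. The only delicate point is carefully handling the boundary positions $i = 0$ and $i = n$, which is precisely what the augmentation $s_0 = s_{n+1} = 1$, $e_0 = e_{n+1} = 0$ is designed to accommodate.
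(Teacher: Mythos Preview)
Your proof is correct and is essentially identical to the paper's: the involution $\phi$ is precisely the paper's map $f(e)_i = -e_i \bmod s_i$, and your $B_1, B_2, c_{\rm des}$ are the paper's $\alpha_e, \delta_e, d_e$, with the alternation argument matching the paper's observation that both $\alpha_e$ and $\delta_e$ count the maximal nonzero blocks in $(0,e,0)$.
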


\begin{proof}
Let $f : I_n^s \rightarrow I_n^s$ be the involution defined by 
$
f(e)_i = -e_i \text{ mod } s_i.
$
We will now show that $\asc(e) = \des(f(e))$ and $\des(e) = \asc(f(e))$, which proves the theorem. 
Let $a_e$ be the number of ascents $i\in\asc(e)$ such that $e_i > 0$ and $e_{i+1} > 0$, and let $\alpha_e$ be the number of $i\in\asc(e)$ such that $e_i = 0$ and $e_{i+1} > 0$. 
Similarly, we let $d_e$ denote the number of $i\in\des(e)$ with $e_i>0$ and $e_{i+1}>0$, and $\delta_e$ denote the number of $i\in\des(e)$ with $e_i>0$ and $e_{i+1}=0$.  
Note that if $e_i > 0$, then $$\frac{f(e)_i}{s_i} = 1-\frac{e_i}{s_i}.$$
From this it follows that $a_e = d_{f(e)}$ and $d_e = a_{f(e)}$. 
For the ascents and descents counted by $\alpha_e$ and $\delta_e$, respectively, note that since $e_0 = e_{n+1} = 0$, both $\alpha_e$ and $\delta_e$ are equal to the number of contiguous segments of non-zero elements in $(0,e,0)$. 
In particular, $\alpha_e = \delta_e$. 
Also, since $f(e)_i = 0$ if and only if $e_i = 0$, it follows that 
$$\alpha_e = \delta_e = \alpha_{f(e)} = \delta_{f(e)}.$$
Now we can conclude that
$$
\des(e) = d_e + \delta_e = a_{f(e)} + \alpha_{f(e)} = \asc(f(e)),
$$
and
$$
\asc(e) = a_e + \alpha_e = d_{f(e)} + \delta_{f(e)} = \des(f(e)),
$$
which completes the proof.
\end{proof}

In the case when $s = (1,2,\ldots,n)$, we have that $E_n^s(z) = A_n(z)$, the $n^{th}$ Eulerian polynomial.  
In \cite{SV15} the authors showed that, analogous to $A_n(z)$, the $s$-Eulerian polynomials are always real-rooted.
\begin{theorem}
\label{thm: s-eulerian polynomials}
\cite[Theorem 1.1]{SV15}
The $s$-Eulerian polynomial $E_n^s(z)$ is real-rooted for any sequence of positive integers $s$. 
\end{theorem}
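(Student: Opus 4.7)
The plan is to prove the theorem by a standard application of the method of mutually interlacing (compatible) polynomial sequences, which is the natural framework for real-rootedness results of this flavor going back to Brenti--Welker. Rather than attempting to factor $E_n^s(z)$ directly, I would prove a stronger statement about a refinement by induction on $n$.

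First, for each $i$ with $0\leq i<s_n$, define the refined polynomial
\[
E_{n,i}^s(z) := \sum_{\substack{e\in I_n^s \\ e_n=i}} z^{\asc(e)},
\]
so that $E_n^s(z)=\sum_{i=0}^{s_n-1}E_{n,i}^s(z)$. The strengthened claim to prove by induction on $n$ is that the tuple $\bigl(E_{n,0}^s(z),E_{n,1}^s(z),\ldots,E_{n,s_n-1}^s(z)\bigr)$ is a sequence of \emph{mutually interlacing} polynomials with nonnegative coefficients. Once this is established, the conclusion that the sum $E_n^s(z)$ is real-rooted follows from the standard lemma that any nonnegative linear combination of mutually interlacing polynomials is real-rooted.

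The second step is to derive a recurrence. Set $s'=(s_1,\ldots,s_n,s_{n+1})$. Extending $e\in I_n^s$ with last coordinate $i$ by appending a value $j\in\{0,\ldots,s_{n+1}-1\}$ produces $e'\in I_{n+1}^{s'}$, and the new ascent at position $n$ is present precisely when $i/s_n<j/s_{n+1}$. Hence
\[
E_{n+1,j}^{s'}(z) \;=\; z\sum_{i/s_n<\,j/s_{n+1}} E_{n,i}^s(z) \;+\; \sum_{i/s_n\geq\,j/s_{n+1}} E_{n,i}^s(z),
\]
with the effect of the artificial boundary $e_{n+1}=0$ handled by the convention $s_{n+1}=1$ in the overall $E_n^s(z)$ after all such refinements are summed. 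Thus the transition from the $E_{n,i}^s$'s to the $E_{n+1,j}^{s'}$'s is given by a matrix whose rows (indexed by $j$) consist of a prefix of $z$'s followed by a suffix of $1$'s, with the breakpoint weakly increasing in $j$.

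The inductive step then amounts to a preservation lemma: if $(f_0,\ldots,f_{m-1})$ is mutually interlacing with nonnegative coefficients and the coefficient matrix $(c_{j,i})$ has the row-monotone structure above (each row is a prefix of $z$'s followed by a suffix of $1$'s, with weakly increasing breakpoints), then the tuple $\bigl(\sum_i c_{j,i}f_i\bigr)_j$ is again mutually interlacing. The main obstacle, and the heart of the argument, is verifying this preservation lemma: one must show that consecutive outputs $g_j$ and $g_{j+1}$ interlace, which reduces to checking the interlacing property for pairs of partial sums $f_0+\cdots+f_k$ and $z(f_0+\cdots+f_k)+(f_{k+1}+\cdots+f_{m-1})$, and so on. These pairwise conditions follow from the assumed mutual interlacing of the $f_i$, but the bookkeeping is delicate. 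The base case $n=1$ is immediate since each $E_{1,i}^s(z)$ is a monomial. Together with Step~1, this yields the theorem.
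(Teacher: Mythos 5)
Your proposal is correct and follows essentially the same route as the proof the paper relies on: this theorem is quoted from Savage--Visontai, whose argument is exactly your refinement of $E_n^s(z)$ by the last coordinate $e_n$, the prefix-of-$z$'s/suffix-of-$1$'s recurrence with weakly increasing breakpoints $t_j=\lceil js_n/s_{n+1}\rceil$, and an interlacing-preservation lemma. The ``delicate bookkeeping'' you defer is precisely the content of Lemma~\ref{lem: interlacing recursion} (i.e.\ \cite[Theorem 8.5]{B15}), which is the same tool the paper uses for the analogous real-rootedness of the $s$-derangement polynomials in Theorem~\ref{thm: s-derangement real-rootedness}.
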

It turns out that the $s$-Eulerian polynomial $E_n^s(x)$ is actually the $h^\ast$-polynomial of a lattice $n$-simplex \cite{SS12}.  
For a sequence of positive integers $s = (s_1,\ldots, s_n)$ the {\bf $s$-lecture hall simplex} is 
\[
P_n^s := \left\{
(x_1,\ldots,x_n)\in\R^n
\,:\,
0\leq \frac{x_1}{s_1}\leq \frac{x_2}{s_2}\leq\cdots\leq\frac{x_n}{s_n}\leq 1
\right\}.
\]
In \cite{SS12} it was shown that $h^\ast(P_n^s;z) = E_n^s(z)$.
Given this geometric interpretation, it is natural to ask what can be said about the local $h^\ast$-polynomial of $P_n^s$.   
Analogous to the generalization of $A_n(z)$ to $E_n^s(z)$, we shall see that $\ell^\ast(P_n^s;z)$ generalizes the derangement polynomials $d_n(z)$, and that these polynomials are also all real-rooted.  
To see this, we must first derive a nice formula for $\ell^\ast(P_n^s;z)$.  To this end, we let
\[
\widetilde{I_n^s} := 
\left\{
(e_0,e_1,\ldots,e_{n+1})\in I_{n+2}^{\widetilde{s}} 
\, : \,
\frac{e_i}{s_i}\neq\frac{e_{i+1}}{s_{i+1}}
\mbox{ for all $i\in[n]_0$}
\right\}.
\]
Notice that $\widetilde{I_n^s}$ is isomorphic to a subset of $I_n^s$ via the same map taking $I_{n+2}^{\widetilde{s}}$ to $I_n^s$.  
This subset is precisely the subset of inversion sequences whose descents are enumerated by the local $h^\ast$-polynomial $\ell^\ast(P_n^s;z)$.  
\begin{proposition}
\label{prop: formula for box polynomial}
Let $s = (s_1,\ldots,s_n)$ be a sequence of positive integers.  Then the local $h^\ast$-polynomial of the $s$-lecture hall simplex $P_n^s$ is
\[
\ell^\ast(P_n^s;z) = \sum_{e\in\widetilde{I_n^s}}z^{\asc(e)} = \sum_{e\in\widetilde{I_n^s}}z^{\des(e)}.
\]
\end{proposition}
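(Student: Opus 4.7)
The plan is to set up a bijection between $\widetilde{I_n^s}$ and the lattice points in the open parallelepiped of $P_n^s$, under which the height of a lattice point corresponds to the descent statistic of the inversion sequence; the equality with $\asc$ will then follow from the involution of Theorem~\ref{thm: equidistributed}.

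First I would parametrize the half-open parallelepiped. The vertices of $P_n^s$ are $v^{(0)}=0$ and $v^{(i)}=(0,\ldots,0,s_i,s_{i+1},\ldots,s_n)$ for $i=1,\ldots,n$, so writing a lattice point $(y_1,\ldots,y_n,k)\in\Pi_{P_n^s}$ as $\sum_{i=0}^n\lambda_i(v^{(i)},1)$ with $\lambda_i\in[0,1)$ gives $\lambda_j=y_j/s_j-y_{j-1}/s_{j-1}$ for $j\in[n]$ (with the conventions $y_0=0$, $s_0=1$) and $\lambda_0=k-y_n/s_n$. Writing $y_j=s_jm_j+e_j$ with $m_j:=\lfloor y_j/s_j\rfloor$ and $e_j:=y_j\bmod s_j$, so that $\lambda_j=(m_j-m_{j-1})+(e_j/s_j-e_{j-1}/s_{j-1})$, and adopting $e_0=e_{n+1}=0$, $s_0=s_{n+1}=1$, a case analysis on $\lambda_j\in[0,1)$ forces $m_j-m_{j-1}=1$ exactly when $j-1$ is a descent of the extended sequence, and $0$ otherwise; similarly $\lambda_0\in[0,1)$ forces $k-m_n=1$ exactly when $n$ is a descent (i.e., $e_n>0$). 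Telescoping, the height satisfies $k=\des(e)$, recovering the descent version of the Savage--Schuster formula $h^\ast(P_n^s;z)=E_n^s(z)$.

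Next, to isolate the open parallelepiped, I would observe that the same case analysis yields $\lambda_j=0$ precisely when $e_{j-1}/s_{j-1}=e_j/s_j$, and $\lambda_0=0$ precisely when $e_n/s_n=e_{n+1}/s_{n+1}=0$. Hence the open condition $\lambda_i>0$ for all $i\in[n]_0$ is equivalent to $e_i/s_i\neq e_{i+1}/s_{i+1}$ for all $i\in[n]_0$, which is exactly the defining condition of $\widetilde{I_n^s}$. The bijection therefore restricts to an identification $\Pi_{P_n^s}^\circ\cap\Z^{n+1}\longleftrightarrow\widetilde{I_n^s}$, yielding $\ell^\ast(P_n^s;z)=\sum_{e\in\widetilde{I_n^s}}z^{\des(e)}$.

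Finally, for the equality with the $\asc$-generating function, I would check that the involution $f(e)_i=-e_i\bmod s_i$ from the proof of Theorem~\ref{thm: equidistributed} preserves the zero-pattern of an inversion sequence, hence preserves the condition $e_i/s_i\neq e_{i+1}/s_{i+1}$, and so restricts to an involution on $\widetilde{I_n^s}$ that swaps $\asc$ and $\des$. The argument of Theorem~\ref{thm: equidistributed} then applies verbatim. The main obstacle will be the bookkeeping in step one: carefully tracking the integer and fractional parts of $y_j/s_j$ in each of the three cases (ascent, equality, descent at $j-1$) to verify that the half-open constraint collapses $m_j-m_{j-1}$ to the descent indicator, so that the height becomes $\des(e)$.
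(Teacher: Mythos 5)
Your proposal is correct and follows essentially the same route as the paper: both identify $\Pi_{P_n^s}^\circ\cap\Z^{n+1}$ with $\widetilde{I_n^s}$ via the mod-$s_i$ reduction $e_i = y_i\bmod s_i$, translate the strict barycentric conditions $\lambda_i>0$ into $e_i/s_i\neq e_{i+1}/s_{i+1}$ for all $i\in[n]_0$, and then invoke the involution of Theorem~\ref{thm: equidistributed} restricted to $\widetilde{I_n^s}$. The only difference is that the paper cites Liu--Stanley \cite{LS14} for the bijection $\mathrm{REM}$ and the facet description of $\Pi_{P_n^s}$, whereas you re-derive both from the $\lambda$-parametrization; your telescoping case analysis is the standard proof of those cited facts and is sound (just note that the restriction of the involution to $\widetilde{I_n^s}$ uses not only preservation of the zero-pattern but also that $f(e)_i/s_i = 1-e_i/s_i$ on nonzero entries, so equalities of fractions between nonzero entries are preserved as well).
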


\begin{proof}
Let $\Pi_s$ denote the half-open parallelepiped of $P^s_n$, and $\Pi_s^\circ$ denote its open parallelpiped. 
Recall that, by definition,  $\ell^\ast(P_n^s;z)$ enumerates the ``heights'' $x_{n+1}$ of all $(x_1,\ldots,x_{n+1})\in\Pi_s\cap\Z^{n+1}$. 
Therefore, we would like to find a bijection between $\Pi^{\circ}_s \cap \mathbb{Z}^{n+1}$ and $\widetilde{I_n^s}$ that maps elements with last coordinate $k$ to elements with $k$ descents (or ascents). 
Let REM$: \Pi_s\cap\mathbb{Z}^{n+1} \rightarrow I^s_n$ be the map defined by $$\text{REM}(x)_i = x_i \text{ mod }s_i.$$
In \cite{LS14}, it was proven that this is a bijection such that $x_{n+1} = \des(\text{REM}(x))$. 
Therefore, it suffices to prove that REM$(\Pi_s^{\circ}\cap\mathbb{Z}^{n+1}) = \widetilde{I^s_n}$, and that ascents and descents are equidistributed on $\widetilde{I^s_n}$. 
It can be shown (see Lemma 3.5 in \cite{LS14}) that $\Pi_s$ can be written as the set of points in $\mathbb{R}^{n+1}$ satisfying
\begin{enumerate}
\item $0 \leq \frac{x_1}{s_1} < 1 ,$
\item $0 \leq \frac{x_{i+1}}{s_{i+1}} - \frac{x_i}{s_i} < 1$ for $i \in[n-1]$, and
\item $0 \leq x_{n+1} - \frac{x_n}{s_n} < 1.$
\end{enumerate}
Since $\Pi_s^{\circ}$ is the interior of $\Pi_s$, it can be described with the same relations as above, but where all inequalities are made strict. 
Thus, $\Pi_s^{\circ}\cap\mathbb{Z}^{n+1}$ is the subset of $\Pi_s\cap\mathbb{Z}^{n+1}$ whose elements also satisfy
\begin{enumerate}
\item $0 < \frac{x_1}{s_1} ,$
\item $0 < \frac{x_{i+1}}{s_{i+1}} - \frac{x_i}{s_i}$ for $i \in[n-1]$, and
\item $0 < x_{n+1} - \frac{x_n}{s_n}.$
\end{enumerate}
To find REM$(\Pi_s^{\circ}\cap\mathbb{Z}^{n+1})$, we must see what restrictions the inequalities above translate to under the REM-function. 
The first inequality, $\frac{x_1}{s_1} > 0$, holds if and only if $x_1 \in [s_1-1]$, which is true if and only if REM$(x)_1 \neq 0$. 
For the second inequality, note that it can be written as
\[
0 < k_{i+1} + \frac{\text{REM}(x)_{i+1}}{s_{i+1}} - k_i - \frac{\text{REM}(x)_i}{s_i} < 1,
\]
for some non-negative integers $k_{i+1}$ and $k_i$. 
This holds if and only if 
\[
\frac{\text{REM}(x)_i}{s_i} \neq \frac{\text{REM}(x)_{i+1}}{s_{i+1},}.
\]
The third inequality is equivalent to $x_n \not | s_n$, which is equivalent to REM$(x)_n \neq 0$. 
Now we can conclude that REM$(\Pi^{\circ}_s\cap\mathbb{Z}^{n+1})$ is the subset of $I_n^s$ such that $e_1 \neq 0$, $\frac{e_i}{s_i} \neq \frac{e_{i+1}}{s_{i+1}}$ for $i \in [n-1]$, and $e_n \neq 0$.
Moreover, this subset is equal to $\widetilde{I^s_n}$. 
Finally, to prove that descents and ascents are equidistributed on $\widetilde{I^s_n}$ we note that the involution $f$ from the proof of Theorem \ref{thm: equidistributed} restricts to $\widetilde{I^s_n}$. 
This completes the proof.
\end{proof}

Throughout the remainder of this paper, for a sequence of positive integers $s = (s_1,\ldots,s_n)$ we will let $d_n^s(z):= \ell^\ast(P_n^s;z)$.  
In \cite{SV15}, the authors referred to the polynomials $A_n^s(z)$ as $s$-Eulerian polynomials since they generalize the classical Eulerian polynomial $A_n(z)$.  
Analogously, we will see in Section~\ref{sec: examples} that the polynomials $d_n^s(z)$ generalize the classical derangement polynomial $d_n(z)$. 
Thus, we will refer to $d_n^s(z)$ as the $s$-derangement polynomials.  
\begin{definition}
\label{def: s-derangement polynomial}
Let $s = (s_1,\ldots,s_n)$ be a sequence of positive integers.  The {\bf $s$-derangement polynomial} is
\[
d_n^s(z) := \ell^\ast(P_n^s;z).
\]
\end{definition}

\subsection{Real-rootedness}
\label{subsec: real-rootedness}
The Eulerian polynomial $A_n(z)$ is well-known to be symmetric, real-rooted, and thus log-concave and unimodal.  
In \cite{SV15}, it was shown that the $s$-Eulerian polynomials also possess these desirable distributional properties, with the only (possible) exception being symmetry.
In a similar fashion, the derangement polynomial $d_n(z)$ is also known to be symmetric, real-rooted, and thus log-concave and unimodal.
Since $s$-derangement polynomials are local $h^\ast$-polynomials of lattice simplices, then they are always symmetric.  
To show that they also possess these other nice distributional properties of $d_n(z)$, we show they are real-rooted via the theory of interlacing polynomials.  

Let $p(z) := p_0+p_1z+\cdots+p_dz^d\in\R[z]$ be a real-rooted polynomial of degree $d$ and suppose that it has roots $\alpha_1\geq\alpha_2\geq\cdots\geq\alpha_d$.  
Suppose also that $q(z) := q_0+q_1z+\cdots+q_mz^m\in\R[z]$ is a second real-rooted polynomial of degree $m$ with roots $\beta_1\geq\beta_2\geq\cdots\geq\beta_m$.  
We say that {\bf $q$ interlaces $p$}, denoted $q\preceq p$, if the roots of $p$ and $q$ are ordered such that
\[
\alpha_1\geq\beta_1\geq\alpha_2\geq\beta_2\geq\alpha_3\geq\beta_3\geq\cdots.
\]
Given a sequence of real-rooted polynomials $F :=\left(f^{(i)}\right)_{i=0}^n$, we say that $F$ is an {\bf interlacing sequence} of polynomials if $f_i\preceq f_j$ for all $0\leq i\leq j\leq n$.  
Notice that if a polynomial $p$ is a member of an interlacing sequence then it must be real-rooted.
Thus, to prove that a family of polynomials is real-rooted, it is typical to try and show that the polynomials of interest satisfy a recursion that produces a new interlacing sequence from an old one.
We will use the following such recursion.
\begin{lemma}
\label{lem: interlacing recursion}
Let $\left(f^{(i)}\right)_{i=0}^n$ be a sequence of interlacing polynomials.  
For $m\in\Z_{\geq0}$ and a map $\varphi:[m]_0\longrightarrow\Z_{\geq0}$ satisfying $\varphi(i)\leq\varphi(i+1)$ for all $i\in[m-1]_0$, define the polynomials
\[
g^{(i)} := z\sum_{j<\varphi(i)}f^{(j)}+\sum_{j\geq \varphi(i)}f^{(j)}.
\]
Let $\left(h^{(i)}\right)_{i=0}^m$ be a sequence of polynomials such that $h^{(i)}\in\{g^{(i)},g^{(i)}-f^{(\varphi(i))}\}$ for all $i\in[m]_0$.
If, for all $i\in[m]$, $h^{(i)}\neq g^{(i)}$ whenever $\varphi(i-1)=\varphi(i)$ and $h^{(i-1)} = g^{(i-1)}-f^{(\varphi(i))}$ then $\left(h^{(i)}\right)_{i=0}^m$ is also an interlacing sequence.
\end{lemma}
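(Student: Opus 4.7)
My plan is to proceed by induction on $m$. The base case $m=0$ is immediate. For the inductive step, assume $\bigl(h^{(0)}, \ldots, h^{(m-1)}\bigr)$ is interlacing and adjoin $h^{(m)}$; by a standard common-interleaver argument (in the spirit of Fisk, or of the chains-of-interlacers approach used in~\cite{SV15}), it suffices to verify the adjacent interlacing $h^{(m-1)} \preceq h^{(m)}$ together with the existence of a fixed polynomial that interlaces every member of the family, which one can produce from a suitably chosen polynomial of the form $z\sum_{j<k} f^{(j)} + \sum_{j\geq k} f^{(j)}$.

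Before invoking the induction, I would first verify that each $h^{(i)}$ is individually real-rooted. Writing $h^{(i)} = \sum_{j=0}^{n} c_j^{(i)}(z)\,f^{(j)}$, the coefficients $c_j^{(i)}(z) \in \{z, 0, 1\}$ follow the monotone pattern ``all $z$'s for $j<\varphi(i)$, either $0$ or $1$ at $j=\varphi(i)$, then all $1$'s for $j>\varphi(i)$.''  Such nonnegative combinations of an interlacing sequence are real-rooted by standard results in the theory of interlacing polynomials, e.g.\ in Br\"and\'en's survey on real-rootedness or in the main technical lemma of~\cite{SV15}.

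The crux of the argument is verifying $h^{(m-1)} \preceq h^{(m)}$ via a case analysis on the pair $\bigl(\varphi(m-1), \varphi(m)\bigr)$. When $\varphi(m-1) < \varphi(m)$, the coefficient patterns of $h^{(m-1)}$ and $h^{(m)}$ agree outside the interval $[\varphi(m-1), \varphi(m)]$, and inside this interval the coefficients move in the admissible direction $1 \to z$, possibly transiting through $0$ at the boundary indices $\varphi(m-1)$ and $\varphi(m)$. One then expresses $h^{(m)} - h^{(m-1)}$ in terms of the $f^{(j)}$ and applies a standard two-splits interlacing criterion. When $\varphi(m-1) = \varphi(m)$, the patterns can only differ at $j = \varphi(m)$; the hypothesis excludes the one bad transition $0 \to 1$, leaving the benign transitions $0 \to 0$, $1 \to 1$, and $1 \to 0$, each of which preserves interlacing by direct inspection (for $1 \to 0$, note that subtracting a single $f^{(\varphi(m))}$ from $h^{(m-1)}$ yields $h^{(m)}$, and this operation is compatible with the interlacing of the underlying family).

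The main obstacle I anticipate is the first case above, where the split index and the include/omit flag may be moving simultaneously. The bookkeeping needed to rewrite $h^{(m)} - h^{(m-1)}$ as a clean combination of the $f^{(j)}$ and to verify the sign conditions required by the two-splits lemma is the technically delicate point. The hypothesis on constant-$\varphi$ steps is tailored precisely to rule out the obstruction $0 \to 1$, which would otherwise force $h^{(m-1)} \not\preceq h^{(m)}$; the combinatorial heart of the proof is showing that the monotonicity of $\varphi$ together with this hypothesis eliminates exactly the non-interlacing coefficient configurations while preserving enough flexibility to cover the applications later in the paper.
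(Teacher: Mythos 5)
The paper itself does not argue this lemma from first principles: its proof is the single observation that the statement is immediate from \cite[Theorem 8.5]{B15}, a criterion on the $2\times 2$ minors of the coefficient matrix $\bigl(c^{(i)}_j\bigr)$ defined by $h^{(i)}=\sum_j c^{(i)}_j(z)\,f^{(j)}$ (here with entries in $\{z,0,1\}$) which certifies $h^{(i)}\preceq h^{(j)}$ for \emph{every} pair $i<j$ at once. You have correctly isolated the combinatorial heart of the matter --- the monotone row pattern $z,\dots,z,\{0,1\},1,\dots,1$, and the fact that the hypothesis forbids exactly the transition $0\to 1$ at a repeated split index (iterating the hypothesis along $i,i+1,\dots,j$ forbids it for all pairs with $\varphi(i)=\varphi(j)$, not only adjacent ones) --- but the overall architecture of your argument has a genuine gap.

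An interlacing sequence requires $h^{(i)}\preceq h^{(j)}$ for \emph{all} $i<j$, and $\preceq$ is not transitive: for monic quadratics $h_0,h_1,h_2$ with root sets $\{-4,-2.5\}$, $\{-3,-1\}$, $\{-2,0\}$ one has $h_0\preceq h_1$ and $h_1\preceq h_2$ but $h_0\not\preceq h_2$, since $-2.5<-2$. So the inductive step cannot reduce to the single adjacent comparison $h^{(m-1)}\preceq h^{(m)}$. You anticipate this with a common interleaver, and it is true that for a family of nonzero polynomials of equal degree a common interleaver upgrades consecutive interlacing to full interlacing; but you neither exhibit this polynomial nor prove that it interlaces every $h^{(i)}$ --- in particular the members $g^{(i)}-f^{(\varphi(i))}$ with the boundary term removed --- and those verifications are pairwise interlacing claims of exactly the kind the induction was meant to avoid. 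The reduction is also fragile in the paper's application, where some $f^{(j)}$ are the zero polynomial and the $h^{(i)}$ do not all have the same degree. Finally, ``expressing $h^{(m)}-h^{(m-1)}$ in terms of the $f^{(j)}$'' cannot by itself decide interlacing, since $p\preceq q$ is not a property of $q-p$; the workable pairwise tool is Obreschkoff's theorem (for real-rooted $p,q$ with nonnegative coefficients, one of $p\preceq q$ or $q\preceq p$ holds if and only if $\lambda p+\mu q$ is real-rooted for all $\lambda,\mu\ge 0$) together with a check of which order holds, which is precisely what the row-pair instance of the minor condition in \cite[Theorem 8.5]{B15} packages. Once that tool is in hand for an arbitrary pair $(i,j)$, the induction and the common-interleaver detour become unnecessary.
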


The proof of Lemma~\ref{lem: interlacing recursion} is immediate from \cite[Theorem 8.5]{B15}.  Lemma~\ref{lem: interlacing recursion} allows us to prove the following theorem.
\begin{theorem}
\label{thm: s-derangement real-rootedness}
Let $s= (s_1,\ldots, s_n)$ be a sequence of positive integers.  
Then the $s$-derangement polynomial $d_n^s(z)$ is real-rooted, and thus log-concave and unimodal.
\end{theorem}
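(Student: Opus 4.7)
The plan is to prove real-rootedness by refining $d_n^s(z)$ into a family that forms an interlacing sequence, and then invoking Lemma~\ref{lem: interlacing recursion} inductively on $n$. Given $s = (s_1, \ldots, s_n)$, define the relaxed set
\[
\widehat{I_n^s} := \{e \in I_n^s : e_1 \neq 0, \, e_i/s_i \neq e_{i+1}/s_{i+1} \text{ for all } i \in [n-1]\},
\]
which is $\widetilde{I_n^s}$ with the right-endpoint condition $e_n \neq 0$ removed. For $j \in [s_n-1]_0$, set
\[
\widehat{d}_n^{s,j}(z) := \sum_{e \in \widehat{I_n^s},\, e_n = j} z^{\asc(e)},
\]
so that by Proposition~\ref{prop: formula for box polynomial} we have $d_n^s(z) = \sum_{j=1}^{s_n-1} \widehat{d}_n^{s,j}(z)$. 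We will show by induction on $n$ that $(\widehat{d}_n^{s,j}(z))_{j=0}^{s_n-1}$ is an interlacing sequence; real-rootedness of $d_n^s(z)$ then follows because any two polynomials in such a sequence interlace, making the family pairwise compatible, so every nonnegative combination is real-rooted.

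For the inductive step, truncating $e = (e_1, \ldots, e_{n-1}, j) \in \widehat{I_n^s}$ to $e' = (e_1, \ldots, e_{n-1})$ lands in $\widehat{I_{n-1}^{s'}}$, with $s' := (s_1, \ldots, s_{n-1})$, subject to $e'_{n-1}/s_{n-1} \neq j/s_n$; a direct count gives $\asc(e) = \asc(e') + [e_{n-1}/s_{n-1} < j/s_n]$. Grouping by $i := e'_{n-1} \in [s_{n-1}-1]_0$ yields
\[
\widehat{d}_n^{s,j}(z) = z \sum_{i :\, i/s_{n-1} < j/s_n} \widehat{d}_{n-1}^{s',i}(z) + \sum_{i :\, i/s_{n-1} > j/s_n} \widehat{d}_{n-1}^{s',i}(z).
\]
Setting $\varphi(j) := \min\{i \in [s_{n-1}-1]_0 : i/s_{n-1} \geq j/s_n\}$, which is nondecreasing in $j$, this matches the template of Lemma~\ref{lem: interlacing recursion}: we have $\widehat{d}_n^{s,j} = g^{(j)}$ when $\varphi(j)/s_{n-1} > j/s_n$, and $\widehat{d}_n^{s,j} = g^{(j)} - \widehat{d}_{n-1}^{s',\varphi(j)}$ when $\varphi(j)/s_{n-1} = j/s_n$. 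The compatibility hypothesis of Lemma~\ref{lem: interlacing recursion} is vacuous, since $\varphi(j-1) = \varphi(j) = k$ together with $k/s_{n-1} = (j-1)/s_n$ would force $k/s_{n-1} < j/s_n$, contradicting $\varphi(j) = k$.

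The base case $n = 1$ is immediate, as $\widehat{d}_1^{s,0}(z) = 0$ and $\widehat{d}_1^{s,j}(z) = z$ for $j \geq 1$ form a trivial interlacing sequence. Log-concavity and unimodality then follow from real-rootedness combined with the nonnegativity of coefficients, by Theorem~1.2.1 of~\cite{B89}. The main design choice, and the key subtlety, is the refinement $\widehat{I_n^s}$: dropping the right-endpoint condition $e_n \neq 0$ is precisely what allows the recursion in $n$ to close cleanly, at the price that $d_n^s(z)$ appears as a proper subsum rather than the full sum of the resulting interlacing sequence.
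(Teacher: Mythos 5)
Your proposal is correct and follows essentially the same route as the paper: the paper likewise drops the right-endpoint condition to form the refined sets (its $\widetilde{J_n^s}$ is your $\widehat{I_n^s}$), defines the same polynomials $p_{n,k}^s = \widehat{d}_n^{s,k}$, derives the same recursion with $t_k=\lceil ks_{n-1}/s_n\rceil$ playing the role of your $\varphi$, and closes by applying Lemma~\ref{lem: interlacing recursion} from the same base case. The only cosmetic difference is the final step, where the paper sums the interlacing subfamily via one more application of Lemma~\ref{lem: interlacing recursion} rather than citing the general fact that nonnegative sums of an interlacing sequence are real-rooted.
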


\begin{proof}
Let ${\bf s} = (s_1,s_2,\ldots)$ be an infinite sequence of positive integers, and for each $n\geq1$ define the collection
\[
\widetilde{J_n^s} := 
\left\{
(e_0,\ldots,e_{n+1})\in\Z^{n+2}
\, : \,
0\leq e_i<\tilde{s}_i \mbox{ for $i\in[n+1]_0$}, \frac{e_i}{\tilde{s}_i}\neq\frac{e_{i+1}}{\tilde{s}_{i+1}} \mbox{ for $i\in[n-1]_0$}
\right\}, 
\]
where we assume $\tilde{s} = (s_0:=1, s_1,\ldots, s_n, s_{n+1}:=1)$ is defined with respect to the subsequence $s = (s_1,\ldots, s_n)$ of ${\bf s}$.  
For $n\geq 1$ and $0\leq k<s_n$, define the polynomials
\[
p_{n,k}^s(z) := \sum_{e\in\widetilde{J_n^s}}\chi(e_n = k)z^{\asc(e)},
\]
where $\chi(S)$ is equal to $1$ if $S$ is a true statement and $0$ otherwise.
Notice that $\widetilde{J_n^s}$ contains $\widetilde{I_n^s}$, and in particular, $e\in\widetilde{I_n^s}$ if and only if $e\in\widetilde{J_n^s}$ and $e_n\neq 0$.  
Thus,
\[
\ell^\ast(P_n^s;z) = \sum_{k=1}^{s_n-1}p_{n,k}^s(z).
\]
On the other hand, for $n\geq1$ and $0\leq k< s_n$, let $t_k:=\left\lceil\frac{ks_{n-1}}{s_n}\right\rceil$, and notice that
\begin{equation}
\label{eqn: recursion}
p_{n,k}^s(z) = z\sum_{i=0}^{t_k-1}p_{n-1,i}^s(z)+\chi(s_n\not\big| \, ks_{n-1})p_{n-1,t_k}^s(z) + \sum_{i=t_k+1}^{s_n-1}p_{n-1,i}^s(z).
\end{equation}
This recursion holds since if $e = (e_0,e_1,\ldots,e_n,e_{n+1})\in\widetilde{J_n^s}$ with $e_n = k$, then $n-1$ is a ascent in $e$ if and only if $\frac{e_{n-1}}{s_{n-1}}<\frac{k}{s_n}$.  
Equivalently, $n-1$ is an ascent in $e$ if and only if $e_{n-1}<t_k$.  
Moreover, for all $k\in[s_n-1]_0$,  it is impossible that $t_k = t_{k+1}$, $s_n\big|\, ks_{n-1}$, and $s_n\not\big|\, (k+1)s_{n-1}$.  
Therefore, the recursion in equation~\eqref{eqn: recursion} is of the form given in Lemma~\ref{lem: interlacing recursion}.
Moreover, for $n =1$ we have the initial conditions 
\[
p_{1,0}^s(z) = 0 
\qquad
\mbox{and} 
\qquad
p_{1,k}^s(z) = z
\mbox{ for $1\leq k<s_n$}.
\]
Thus, $(p_{1,k}^s)_{k=0}^{s_n-1}$ is an interlacing sequence.  
Lemma~\ref{lem: interlacing recursion} then implies that $(p_{n,k}^s)_{k=0}^{s_n-1}$ is an interlacing sequence for all $n\geq1$. 
Therefore, the subsequence $\ell_n^s := (p_{n,k+1}^s)_{k=0}^{s_n-2}$ is also an interlacing sequence, and applying the recursion in Lemma~\ref{lem: interlacing recursion} to $\ell_n^s$ with $m = s_n-2$ and $\varphi(i) := i$ for all $i\in[m]_0$ shows that 
\[
\sum_{k=0}^{s_n-2}p_{n,k+1}^s(z) = \ell^\ast(P_n^s;z)
\]
is a real-rooted polynomial.  
This completes the proof.
\end{proof}

\begin{remark}
\label{rmk: on the proof}
We remark that the recursion in Lemma~\ref{lem: interlacing recursion}, which we used in the proof of Theorem~\ref{thm: s-derangement real-rootedness}, has recently been used to prove a variety of real-rootedness results \cite{BL16,J16,SV15,S17}.
In particular, in the proof of Theorem~\ref{thm: s-derangement real-rootedness}, we essentially are showing that the recursion used by Savage and Visontai in \cite{SV15} to prove real-rootedness of $E_n^s(z)$ successfully restricts to the subsets $\widetilde{J_n^s}$ of $I_n^s$ and the associated polynomials $p_{n,k}^s(z)$.  
Furthermore, in a recent paper \cite{BL16}, Br\"and\'en and Leander generalized the $s$-Eulerian polynomials to $(P,s)$-Eulerian polynomials for a given poset $P$.  
Using Proposition~\ref{prop: formula for box polynomial}, it can be shown that the $s$-derangement polynomials are also $(P,s)$-Eulerian polynomials.  
An alternative proof of Theorem~\ref{thm: s-derangement real-rootedness} can then be derived from  \cite[Theorem 5.2]{BL16} which again uses the recursion in Lemma~\ref{lem: interlacing recursion}.  
Finally, we note that a third proof of Theorem~\ref{thm: s-derangement real-rootedness} that uses the theory of compatible polynomials \cite[Section 2.2]{SV15} can be found in \cite[Section 4.4]{G18}, the masters thesis of the first author (upon which this paper is based).  
At their heart, all of these proofs are fundamentally a consequence of Lemma~\ref{lem: interlacing recursion}, a commonly used recursion for recovering real-rootedness of generating polynomials.
\end{remark}

It turns out that the $s$-derangement polynomials provide an extensive and useful generalization of the derangement polynomial $d_n(z)$, similar to how the $s$-Eulerian polynomials provide such a generalization of the Eulerian polynomial $A_n(z)$.  
The remainder of this document is devoted to studying the nature and applications of this generalization.

\section{Examples}
\label{sec: examples}
In this section, we will observe that many of the well-studied families of derangement polynomials can be realized as $s$-derangement polynomials, much in the same fashion as how many of the well-studied generalizations of Eulerian polynomials can be realized as $s$-Eulerian polynomials.  
At the same time, these examples are also known to be local $h$-polynomials for well-studied subdivisions of a simplex in topology and algebra.  
Thus, we also provide a positive answer to Question~\ref{quest: local relations} by showing that these local $h$-polynomials are all local $h^\ast$-polynomials for $s$-lecture hall simplices.  

\subsection{Derangements}
\label{subsec: derangements}
Since $d_n(z)$ is the local $h$-polynomial of the barycentric subdivision of the $(n-1)$-simplex \cite[Proposition 2.4]{S92}, whose $h$-polynomial is the $n^{th}$ Eulerian polynomial, $A_n(z)$, we would analogously like to see that some $s$-lecture hall simplex with $h^\ast$-polynomial $A_n(z)$ has local $h^\ast$-polynomial $d_n(z)$.  
The first natural candidate for $P_n^s$ has $s = (1,2,\ldots,n)$.  
However, one can see from the inequalities defining the open parallelpiped $\Pi_s^\circ$ that $d_n^s(z) = 0$ since $s_1 = 0$.  
On the other hand, the reflexive $s$-lecture hall simplex with $s = (2,3,\ldots,n)$ also has $h^\ast$-polynomial $A_n(z)$, and this turns out to be exactly the $s$-lecture hall simplex we need.  
\begin{theorem}
\label{thm: derangements}
Let $s = (2,3,\cdots,n)$.  Then the $s$-derangement polynomial $d_n^s(z)$ is
\[
d_{n-1}^s(z) = d_n(z),
\]
the classical derangement polynomial.
Moreover, the local $h$-polynomial of the barycentric subdivision of the $(n-1)$-simplex is the local $h^\ast$-polynomial of the $s$-lecture hall simplex $P_{n-1}^s$.  
\end{theorem}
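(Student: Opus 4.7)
The plan is to reduce everything to a polynomial identity $d_{n-1}^s(z) = d_n(z)$ for $s=(2,3,\ldots,n)$; once this is established, the second assertion follows immediately, because Stanley's result \cite[Proposition 2.4]{S92} identifies the local $h$-polynomial of the barycentric subdivision of the $(n-1)$-simplex with the classical derangement polynomial $d_n(z) = \ell^\ast(P_{n-1}^s;z) = d_{n-1}^s(z)$.

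To prove the polynomial identity, I would use Proposition~\ref{prop: formula for box polynomial} to express
\[
d_{n-1}^s(z) \;=\; \sum_{e\in\widetilde{I_{n-1}^s}} z^{\asc(e)},
\]
and then construct an explicit bijection
\[
\Phi : \widetilde{I_{n-1}^s} \longrightarrow \mathfrak{D}_n
\]
that sends $\asc(e)$ to $\exc(\Phi(e))$. The natural route is to first exhibit a bijection $\Psi:I_{n-1}^s\to\mathfrak{S}_n$ that witnesses the identity $E_{n-1}^s(z)=A_n(z)$ with respect to the statistics $(\asc,\exc)$ (the standard recursive insertion bijection underlying the Savage--Schuster framework does exactly this, after pairing with the fact that $\des$ and $\exc$ are equidistributed on $\mathfrak{S}_n$), and then verify that the extra conditions defining $\widetilde{I_{n-1}^s}$ (namely $e_1\neq 0$, $e_{n-1}\neq 0$, and $e_i/s_i\neq e_{i+1}/s_{i+1}$) translate under $\Psi$ exactly to ``$\pi$ has no fixed point''.

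An attractive alternative, which I would also keep in mind as a cross-check, is to use inclusion--exclusion. The half-open parallelepiped decomposes as a disjoint union of open parallelepipeds over faces of $P_{n-1}^s$, giving
\[
E_{n-1}^s(z) \;=\; h^\ast(P_{n-1}^s;z) \;=\; \sum_{F} \ell^\ast(F;z),
\]
and the same identity $A_n(z)=\sum_{k=0}^{n}\binom{n}{k}d_k(z)$ holds classically on the permutation/derangement side. Matching these two decompositions face by face (after identifying each face of $P_{n-1}^s$ with an appropriate sub-lecture-hall-simplex) would give $d_{n-1}^s(z)=d_n(z)$ by induction on $n$. The base case $n=2$ is immediate: $s=(2)$, $\widetilde{I_1^s}=\{(1)\}$ with $\asc=1$, so $d_1^s(z)=z=d_2(z)$.

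The main obstacle I expect is the verification step in the bijective approach: showing that the three ``tilde'' constraints on $\widetilde{I_{n-1}^s}$ correspond, under $\Psi$, precisely to the fixed-point-free condition on $\mathfrak{S}_n$. The boundary conditions $e_1\neq 0$ and $e_{n-1}\neq 0$ should track the positions $1$ and $n$ not being fixed, while the strict inequalities $e_i/s_i\neq e_{i+1}/s_{i+1}$ should encode the absence of fixed points at intermediate positions; pinning this down requires a careful choice of $\Psi$ that makes the fixed-point statistic visible in the $s$-inversion sequence. Once that translation is verified, the identity $\asc(e)=\exc(\Phi(e))$ is inherited from the ambient bijection, and the theorem follows.
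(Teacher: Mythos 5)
Your overall strategy coincides with the paper's: apply Proposition~\ref{prop: formula for box polynomial} to rewrite $d_{n-1}^s(z)$ as a sum over $\widetilde{I_{n-1}^s}$, produce a statistic-preserving bijection onto $\mathfrak{D}_n$, and then quote \cite[Proposition 2.4]{S92} for the second assertion. Your base case and your reading of the ``tilde'' constraints (since the $s_i$ are distinct, $\tfrac{e_i}{s_i}=\tfrac{e_{i+1}}{s_{i+1}}$ forces $e_i=e_{i+1}=0$, so $\widetilde{I_{n-1}^s}$ is exactly the set of sequences with no two adjacent zeros in $(e_0,\ldots,e_n)$) are both correct.

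The gap is that the bijection itself --- which is the entire content of the theorem --- is never constructed. You defer to ``the standard recursive insertion bijection underlying the Savage--Schuster framework,'' but no off-the-shelf map does what you need. The Savage--Schuster/Lehmer-code correspondences relate $\asc$ or $\des$ on inversion sequences to \emph{descents} of permutations (and for the sequence $s=(1,2,\ldots,n)$, not $s=(2,3,\ldots,n)$); passing from descents to excedances by composing with an equidistribution bijection on $\mathfrak{S}_n$ destroys any control over where the fixed points sit, so there is no reason the tilde constraints would then carve out $\mathfrak{D}_n$. What is actually required is a map that makes excedances and fixed points \emph{simultaneously} visible in the inversion sequence. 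The paper achieves this by reading $e_n,e_{n-1},\ldots,e_0$ and building the \emph{cycle decomposition} of a permutation: a zero entry closes the current cycle and opens a new one at the smallest available element, a nonzero entry $e_i$ appends the $e_i$-th smallest available element. Under this map each descent of $e$ becomes an edge $j\mapsto\pi(j)$ with $\pi(j)>j$ (an excedance), and two adjacent zeros become a cycle opened and immediately closed, i.e.\ a fixed point --- which is exactly why $\widetilde{I_{n-1}^s}$ maps onto $\mathfrak{D}_n$. Without some construction of this kind, your plan does not close.

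A remark on your inclusion--exclusion alternative: matching $h^\ast(P_{n-1}^s;z)=\sum_F\ell^\ast(F;z)$ against $A_n(z)=\sum_k\binom{n}{k}d_k(z)$ face by face is harder than it looks, because the faces of $P_{n-1}^s$ are \emph{not} smaller lecture hall simplices of the same type: by the paper's later Theorem~\ref{thm: faces}, the face on vertices $v_{i_0},\ldots,v_{i_m}$ has local $h^\ast$-polynomial $d_m^\mu(z)$ with $\mu_j=\gcd(s_{i_{j-1}+1},\ldots,s_{i_j})$, and these gcd's of consecutive integers are typically $1$, killing many terms. Sorting out which faces survive and with what multiplicity is essentially a second combinatorial argument, not a routine cross-check.
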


\begin{proof}
Note that by Proposition \ref{prop: formula for box polynomial}, it suffices to prove that
\[
\sum_{e \in \widetilde{I}^s_{n-1}}z^{\des(e)} = \sum_{\pi \in \mathfrak{D}_n}z^{\exc(\pi)}.
\]
This would follow if we could find a bijection $f : \widetilde{I}^s_{n-1} \rightarrow \mathfrak{D}_n$ such that $\des(e) = \exc(f(e))$. 
To do this, we will use the elements of $\widetilde{I}^s_{n-1}$ to build the cycles of permutations. 
Let $e \in \widetilde{I}^s_{n-1}$, and let $f(e)$ be the permutation obtained in the following way: 

First, set $A :=[n]$, the set of available elements, and keep track of the current cycle we are working on (at first, this cycle is empty). 
Go through the entries of $e$ in the order $e_n,e_{n-1},\ldots,e_1,e_0$. 
If $e_i = 0$ and the current cycle is nonempty, close off the current cycle, start a new cycle at $\min(A)$, the smallest element of $A$, and set $A:=A\setminus\{\min(A)\}$. 
If $e_i \neq 0$, add $\alpha$, the $e_i^{th}$ smallest number of $A$, to the current cycle and set $A:=A\setminus\{\alpha\}$. 
Note that the last element $e_0$ is always zero, so here we will always close off the last cycle and end the procedure. 
This process yields a permutation, and the map can be inverted. 
To invert it, take the smallest element from each cycle, sort the cycles according to these elements, and reverse the operations above. 
What remains to prove is that $f$ maps $\widetilde{I}^s_{n-1}$ to $\mathfrak{D}_n$, and that it maps descents to excedances. 

Recall that a descent in $e \in I^s_{n-1}$ is a pair of adjacent elements in $(e_0,e_1,\ldots,e_n)$ where $\frac{e_i}{s_i} > \frac{e_{i+1}}{s_{i+1}}$. 
There are two ways this can happen. 
First, either both $e_i$ and $e_{i+1}$ are non-zero and $e_i \geq e_{i+1}$. 
This corresponds to adding the $e_{i+1}^{st}$ smallest number to a cycle and the $e_i^{th}$ smallest number directly after. 
However, since $e_i \geq e_{i+1}$, the first number we add is smaller than the second. 
So this edge in the cycle decomposition in $f(e)$ will be an excedance. 
The second way a descent can happen is if $e_i$ is non-zero and $e_{i+1}$ is zero. 
This corresponds to starting a new cycle at the smallest number, and then connecting it to some bigger number. 
Again, this edge in the cycle decomposition is an excedance. 
Also, all excedances correspond to these descents, so $\exc(f(e)) = \des(e)$. 
Finally, the only way
$
\frac{e_i}{s_i} = \frac{e_{i+1}}{s_{i+1}}
$
can happen is if $e_{i} = e_{i+1} = 0$. So $\widetilde{I}^s_{n-1}$ is the set of $s$-inversion sequences where no two adjacent elements of $(e_0,e_1,\ldots,e_n)$ are zero. 
In $f(e)$, two adjacent zeros corresponds to starting a new cycle and then immediately closing it off. 
Therefore, $\widetilde{I}^s_{n-1}$ corresponds to the set of permutations in $\mathfrak{S}_n$ with no cycles of length $1$ (i.e.~fixed points), which is the set of derangements $\mathfrak{D}_n$.

Finally, the fact that the local $h$-polynomial of the barycentric subdivision of the $(n-1)$-simplex is $d_n^s(z)$ follows immediately from \cite[Proposition 2.4]{S92}.  
This completes the proof.
\end{proof}

\begin{example}
\label{ex: bijection example}
As an example of the bijection in the proof of Theorem \ref{thm: derangements}, suppose that $n = 5$ and $\widetilde{e} = (e_0,e_1,e_2,e_3,e_4,e_5) = (0,1,0,3,2,0)$. We will now read through $\widetilde{e}$ from right to left in order to get $f(e)$.

\begin{enumerate}
\item $e_5 = 0$, so we will start a cycle at the smallest element, which is $1$.
\item $e_4 = 2$, so we will add the second smallest element, $3$, to the cycle. 
\item $e_3 = 3$, so we will add the third smallest element, $5$, to the cycle.
\item $e_2 = 0$, so we will close off the current cycle and start a new one at the smallest element, $2$.
\item $e_1 = 1$, so we will add the smallest element, $4$, to the cycle.
\item $e_0 = 0$, so we will close off the current cycle, and since there are no more elements we are done.
\end{enumerate}
Therefore, $f(e)$ is given by the cycle decomposition $(1,3,5)(2,4)$, or equivalently, $f(e) = 34521$.
\end{example}

\begin{remark}
[On a geometric proof]
\label{rmk: not due to lattice geometry}
It follows from \cite[Remark 7.18]{KS16} that if $T$ is a lattice triangulation of an $s$-lecture hall simplex $P_n^s$ then the local $h^\ast$-polynomial $\ell^\ast(P_n^s;z)$ of $P_n^s$ and the local $h$-polynomial $\ell_{\verts(P_n^s)}(T_{\verts(P_n^s)};z)$ satisfy the monotonicity property
\[
\ell^\ast(P_n^s;z) \geq \ell_{\verts(P_n^s)}(T_{\verts(P_n^s)};z).
\]
Thus, a possible geometric proof of Theorem~\ref{thm: derangements} would be to show that for $s = (2,3,\ldots,n)$ the $s$-lecture hall simplex $P_{n-1}^s$ admits a lattice triangulation $T$ that is abstractly isomorphic to the barycentric subdivision of a simplex.  
One natural way to try and give such a geometric proof would be to first identify the vertices of $P_{n-1}^s$ with the vertices of the abstract simplicial complex $2^{[n]}$ prior to subdivision and then find a lattice triangulation of $P_{n-1}^s$ that induces a barycentric subdivision of $2^{[n]}$ given this identification.  
Unfortunately, as can be seen already in dimension two, $P_{n-1}^s$ always has edges that contain no interior lattice points (see Figure~\ref{fig: barycentric}), and hence this approach to a geometric proof of Theorem~\ref{thm: derangements} cannot work.  
However, there do exist $s$-lecture hall simplices  and local $h$-polynomials of subdivisions of a simplex for which this approach does work.  
We will see one such example in the coming subsection.
On the other hand, in Figure~\ref{fig: barycentric} we can also see that $P_2^{(2,3)}$ does admit a lattice triangulation that is abstractly isomorphic to the barycentric subdivision of $2^{[3]}$ if we do not identify the vertices of $P_2^{(2,3)}$ with those of $2^{[3]}$ prior to subdivision.  
Such a triangulation is given by coning over $P_2^{(2,3)}$ from the lattice point $(1,2)$.  
In general, and already for the case of $P_{n-1}^{(2,\ldots,n)}$ for $n\geq4$, it would be interesting to understand when a lattice $(n-1)$-simplex $\Delta$ having local $h^\ast$-polynomial $\ell^\ast(\Delta;z)$ equal to the local $h$-polynomial $\ell_{[n]}(\Omega_{[n]};z)$ for some subdivision $\Omega$ of $2^{[n]}$ has a lattice triangulation $T$ such that $\ell_{\verts(\Delta)}(T_{\verts(\Delta)};z) = \ell_{[n]}(\Omega_{[n]};z)$, either with or without the identification of $\Delta$ with $2^{[n]}$ prior to subdivision.
\end{remark}
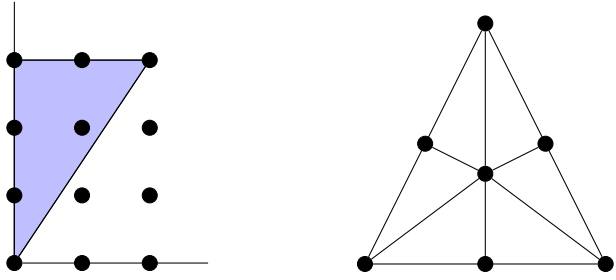
\begin{figure}
\label{fig: barycentric}
\centering
\begin{tabular}{c c}
\begin{tikzpicture}[scale=0.9]

\draw[fill = blue!25] (0,0) -- (0,3) -- (2,3) -- (0,0) -- cycle;
 	 \node [circle, draw, fill=black!100, inner sep=2pt, minimum width=2pt] (1) at (0,0) {};
 	 \node [circle, draw, fill=black!100, inner sep=2pt, minimum width=2pt] (2) at (0,1) {};
 	 \node [circle, draw, fill=black!100, inner sep=2pt, minimum width=2pt] (3) at (0,2) {};
 	 \node [circle, draw, fill=black!100, inner sep=2pt, minimum width=2pt] (4) at (0,3) {};
 	 \node [circle, draw, fill=black!100, inner sep=2pt, minimum width=2pt] (5) at (1,0) {};
 	 \node [circle, draw, fill=black!100, inner sep=2pt, minimum width=2pt] (6) at (1,1) {};
 	 \node [circle, draw, fill=black!100, inner sep=2pt, minimum width=2pt] (7) at (1,2) {};
 	 \node [circle, draw, fill=black!100, inner sep=2pt, minimum width=2pt] (8) at (1,3) {};
 	 \node [circle, draw, fill=black!100, inner sep=2pt, minimum width=2pt] (9) at (2,0) {};
 	 \node [circle, draw, fill=black!100, inner sep=2pt, minimum width=2pt] (10) at (2,1) {};
 	 \node [circle, draw, fill=black!100, inner sep=2pt, minimum width=2pt] (11) at (2,2) {};
 	 \node [circle, draw, fill=black!100, inner sep=2pt, minimum width=2pt] (12) at (2,3) {};

 	 \node (y) at (0,4) {};
 	 \node (x) at (3,0) {};

 	 \draw   	 (1) -- (4) ;
 	 \draw   	 (4) -- (12) ;
 	 \draw       	 (12) -- (1) ;
	
	 \draw   	 (1) -- (x) ;
 	 \draw   	 (1) -- (y) ;

\end{tikzpicture}

 	& \hspace{.5in}
	
\begin{tikzpicture}[scale=0.8]
 	 \node [circle, draw, fill=black!100, inner sep=2pt, minimum width=2pt] (1) at (0,-.5) {};
 	 \node [circle, draw, fill=black!100, inner sep=2pt, minimum width=2pt] (2) at (0,2) {};
 	 \node [circle, draw, fill=black!100, inner sep=2pt, minimum width=2pt] (3) at (-2,-2) {};
 	 \node [circle, draw, fill=black!100, inner sep=2pt, minimum width=2pt] (4) at (2,-2) {};
 	 \node [circle, draw, fill=black!100, inner sep=2pt, minimum width=2pt] (5) at (-1,0) {};
 	 \node [circle, draw, fill=black!100, inner sep=2pt, minimum width=2pt] (6) at (1,0) {};
 	 \node [circle, draw, fill=black!100, inner sep=2pt, minimum width=2pt] (7) at (0,-2) {};
	 
 	 \draw         (2) -- (3) ;
 	 \draw         (3) -- (4) ;
 	 \draw         (4) -- (2) ;
 
 	 \draw         (1) -- (2) ;
 	 \draw         (1) -- (3) ;
 	 \draw         (1) -- (4) ;
 	 \draw         (1) -- (5) ;
 	 \draw         (1) -- (6) ;
 	 \draw         (1) -- (7) ;
	 	  
\end{tikzpicture}
 	\\
\end{tabular}
\vspace{-0.2cm}
\caption{The $s$-lecture hall simplex $P_{n-1}^s$ with $s = (2,3,\ldots,n)$ does not admit a lattice triangulation that is isomorphic (as an abstract simplicial complex) to the barycentric subdivision of $2^{[n]}$ if we identify the vertices of $P_{n-1}^s$ with those of $2^{[n]}$ prior to subdivision.  This can be seen already for $n = 3$.}
\label{fig: barycentric subdivision} 
\end{figure}

\subsection{The $r^{th}$ Edgewise Subdivision}
\label{subsec: edgewise subdivision}
The $r^{th}$ edgewise subdivision is another well-studied subdivision of a simplicial complex that arises in a variety of contexts within algebra and topology \cite{BV09,BruR05,EG00,G89}.  
Within algebra, it is fundamentally connected to the Veronese construction \cite{BV09}. 
For $r\geq1$, the $r^{th}$ edgewise subdivision of a simplex is defined as follows:
Suppose that $\Delta\subset\R$ is an $(n-1)$-dimensional simplex with $0$-dimensional faces $e^{(1)},e^{(2)},\ldots,e^{(n)}\in\R^n$, the standard basis vectors in $\R^n$.  
For $x = (x_1,\ldots,x_n)\in\Z^n$, we let 
\[
\supp(x) := \{
i\in[n]
\, : \,
x_i\neq0
\},
\]
and we define 
\[
\varphi(x) := (x_1,x_1+x_2,\ldots,x_1+\cdots+x_n)\in\R^n.
\]
The {\bf $r^{th}$ edgewise subdivision} of $2^{\verts(\Delta)}$ is the simplicial complex $\left(2^{\verts(\Delta)}\right)^{\langle r\rangle}$ whose set of $0$-dimensional faces are the lattice points $r\Delta\cap\Z^n$, and for which $F\subset r\Delta\cap\Z^n$ is a face of $\left(2^{\verts(\Delta)}\right)^{\langle r\rangle}$ if and only if
\[
\bigcup_{x\in F}\{\supp(x)\}\in2^{[n]},
\]
and for all $x,y\in F$
\[
\varphi(x)-\varphi(y)\in\{0,1\}^n
\qquad
\mbox{or}
\qquad
\varphi(y)-\varphi(x)\in\{0,1\}^n.
\]
In the following, we will identify $2^{\verts(\Delta)}$ with $2^{[n]}$ prior to subdivision, and write $\left(2^{[n]}\right)^{\langle r\rangle}$ to denote the subdivision of $2^{[n]}$ induced by this identification and the $r^{th}$ edgewise subdivision of $2^{\verts(\Delta)}$.  
We analogously call $\left(2^{[n]}\right)^{\langle r\rangle}$ the {\bf $r^{th}$ edgewise subdivision }of $2^{[n]}$.

Let $\SW(n,r)$ denote the collection of all strings $\omega = \omega_0\omega_1\cdots\omega_n$ where $\omega_i\in[r-1]_0$, $\omega_0 = \omega_n = 0$, and $\omega_i \neq \omega_{i+1}$ for all $i = 0,\ldots, n$.  
The collection $\SW(n,r)$ is called the collection of all {\bf Smirnoff words} in \cite{A16} and \cite{LSW12}.  
Given a Smirnoff word $\omega\in\SW(n,r)$, we say that an index $i\in[n-1]_0$ is a {\bf descent} if $\omega_i>\omega_{i+1}$, and we let $\Des(\omega)$ denote the collection of all descents in $\omega$.  
Similarly, we say that an index $i\in[n-1]_0$ is an {\bf ascent} of $\omega$ if $\omega_i<\omega_{i+1}$, and we let $\Asc(\omega)$ denote the collection of all ascents in $\omega$. 
We also let $\des(\omega) := |\Des(\omega)|$ and $\asc(\omega):=|\Asc(\omega)|$.  

\begin{theorem}
\label{thm: edgewise subdivision}
For a positive integer $r$, let $s = (r,r,\ldots,r)$.  Then the $s$-derangement polynomial $d_n^s(z)$ is
\[
d_n^s(z) = \sum_{\omega\in\SW(n+1,r)}z^{\des(\omega)}.  
\]
Moreover, the local $h$-polynomial of the $r^{th}$ edgewise subdivision of the $n$-simplex is the local $h^\ast$-polynomial of the $s$-lecture hall simplex $P_{n}^s$.  
\end{theorem}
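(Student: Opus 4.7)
The plan is to reduce both halves of the theorem to the Smirnoff word descent generating polynomial $\sum_{\omega\in\SW(n+1,r)}z^{\des(\omega)}$. First, I would specialize Proposition~\ref{prop: formula for box polynomial} to $s = (r,r,\ldots,r)$, for which $\widetilde{s} = (1,r,r,\ldots,r,1)$. Under this specialization, an element $(e_1,\ldots,e_n)\in\widetilde{I_n^s}$ is exactly a tuple in $[r-1]_0^n$ satisfying $e_1\neq 0$, $e_n\neq 0$, and $e_i\neq e_{i+1}$ for all $i\in[n-1]$: the boundary non-equalities come from comparing against $e_0=e_{n+1}=0$ with denominators $s_0=s_{n+1}=1$, while the interior ones come from the common denominator $r$ cancelling in $\tfrac{e_i}{r}\neq \tfrac{e_{i+1}}{r}$.

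Second, I would consider the obvious map $\psi\colon \widetilde{I_n^s}\to \SW(n+1,r)$ sending $(e_1,\ldots,e_n)$ to $(0,e_1,\ldots,e_n,0)$, and check that it is a descent-preserving bijection. Bijectivity is immediate, since the three conditions characterizing $\widetilde{I_n^s}$ are precisely what is needed for $\psi(e)$ to be a Smirnoff word with pinned zero endpoints. For the descent statistic a brief case analysis suffices: index $0$ is a descent on neither side (on the inversion side $\tfrac{0}{1}>\tfrac{e_1}{r}$ fails because $e_1>0$, and on the Smirnoff side $\omega_0=0>\omega_1$ fails for the same reason); index $n$ is a descent on both sides (both conditions reduce to $e_n>0$, which is guaranteed); and for interior indices $i\in[n-1]$ the common denominator $r$ cancels, so $\tfrac{e_i}{r}>\tfrac{e_{i+1}}{r}$ is equivalent to $\omega_i>\omega_{i+1}$. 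Combined with Proposition~\ref{prop: formula for box polynomial}, this yields $d_n^s(z) = \sum_{\omega\in\SW(n+1,r)}z^{\des(\omega)}$, establishing the first equality.

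For the second assertion, I would invoke the known combinatorial formula (recorded in the survey \cite{A16} and originating in the work of Athanasiadis on the $r^{th}$ edgewise subdivision) that the local $h$-polynomial of the $r^{th}$ edgewise subdivision of the $n$-simplex is exactly $\sum_{\omega\in\SW(n+1,r)}z^{\des(\omega)}$. Combined with the formula derived above, this identifies this local $h$-polynomial with $\ell^\ast(P_n^s;z) = d_n^s(z)$, completing the proof. The main obstacle, such as it is, is simply the meticulous bookkeeping of descent contributions at the two boundary indices on the two sides of $\psi$; everything else is routine. Unlike in the setting of Remark~\ref{rmk: not due to lattice geometry}, one could here also reasonably pursue a direct geometric proof, since the vertices of $P_n^{(r,\ldots,r)}$ sit at $r$-fold lattice multiples of the standard simplex vertices and the polytope has enough interior lattice structure to support a lattice triangulation combinatorially isomorphic to the $r^{th}$ edgewise subdivision of $2^{[n+1]}$ under the natural identification of vertices.
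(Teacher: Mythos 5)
Your proposal is correct and follows essentially the same route as the paper: identify $\widetilde{I_n^s}$ with $\SW(n+1,r)$ via zero-padding (using Proposition~\ref{prop: formula for box polynomial}) and then quote Athanasiadis's combinatorial formula for the local $h$-polynomial of the $r^{th}$ edgewise subdivision. The only cosmetic difference is that the cited result \cite[Theorem 4.6]{A16} is stated in terms of $\asc(\omega)$ rather than $\des(\omega)$, so the paper inserts the reversal involution $\omega\mapsto\rev(\omega)$ to pass between the two statistics; since your bijection is descent-preserving you would need this same (trivial) equidistribution step, but this is not a genuine gap.
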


\begin{proof}
The fact that 
\[
d_n^s(z) = \sum_{\omega\in\SW(n+1,r)}z^{\des(\omega)}
\]
follows conveniently from the definition of $\SW(n+1,r)$ and that of $\widetilde{I_n^s}$.  
In \cite[Theorem 4.6]{A16}, it is shown that the local $h$-polynomial of the $r^{th}$ edgewise subdivision of $2^{[n+1]}$ is 
\[
\ell_{[n+1]}\left(\left(2^{[n+1]}\right)^{\langle r\rangle}_{[n+1]};z\right) = \sum_{\omega\in\SW(n+1,r)}z^{\asc(\omega)}.
\]
However, it is quick to see that ascents and descents in Smirnoff words are equidistributed via the involution
\[
\omega  = \omega_0\omega_1\ldots \omega_n\omega_{n+1}
\longmapsto 
\rev(\omega):=\omega_{n+1}\omega_n\ldots\omega_1\omega_0.
\]
This completes the proof.
\end{proof}

\begin{remark}
[A strictly geometric proof]
\label{rmk: geometric proof}
Unlike Theorem~\ref{thm: derangements}, Theorem~\ref{thm: edgewise subdivision} does admit a geometric proof as detailed in Remark~\ref{rmk: not due to lattice geometry}.  
In particular, note that, by the definition given, the geometric realization of the $r^{th}$ edgewise subdivision $\left(2^{[n+1]}\right)^{\langle r\rangle}$ of $2^{[n+1]}$ is a lattice triangulation of the convex $n$--simplex $r\Delta:=\conv\left(re^{(1)},\ldots,re^{(n+1)}\right)$. 
Projecting this triangulation along the last coordinate yields a lattice triangulation of the simplex
\[
\pi(r\Delta):=\conv\left(0,re^{(1)},\ldots,re^{(n)}\right)\subset\R^{n}.
\]
Via the unimodular transformation that maps $e^{(i)}\longmapsto e^{(1)}+\cdots+e^{(i)}$ for all $i\in[n]$, we see that $P_n^s$ is unimodularly equivalent to $\pi(r\Delta)$, and therefore admits a lattice triangulation that is abstractly isomorphic to $\left(2^{[n+1]}\right)^{\langle r\rangle}$.  
As described in Remark~\ref{rmk: not due to lattice geometry}, it then follows by \cite[Remark 7.18]{KS16} that $d_n^s(z)$ is the local $h$-polynomial of the $r^{th}$ edgewise subdivision of $2^{[n+1]}$ where we identified the vertices of $2^{[n+1]}$ with those of $P_n^s$ prior to subdivision.
An example of this geometric proof is depicted in Figure~\ref{fig: edgewise}.
\end{remark}
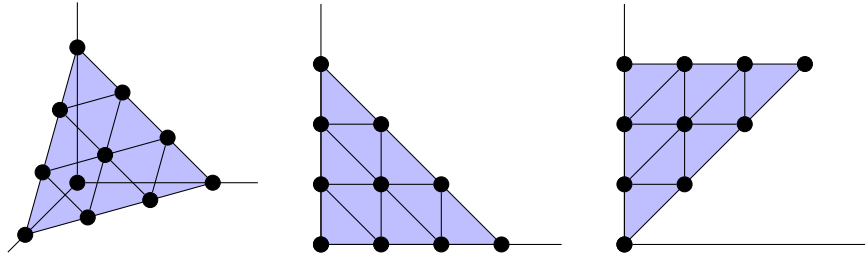
\begin{figure}
\label{fig: edgewise}
\centering
\begin{tabular}{c c c}
\begin{tikzpicture}[scale=0.6]

	\draw[fill=blue!25] (3,0,0) -- (0,3,0) -- (0,0,3) -- (3,0,0) -- cycle;

	
	\node [circle, draw, fill=black!100, inner sep=2pt, minimum width=2pt] (0) at (0,0,0) {};

	 \node [circle, draw, fill=black!100, inner sep=2pt, minimum width=2pt] (1) at (3,0,0) {};
	 \node [circle, draw, fill=black!100, inner sep=2pt, minimum width=2pt] (2) at (0,3,0) {};
	 \node [circle, draw, fill=black!100, inner sep=2pt, minimum width=2pt] (3) at (0,0,3) {};
	 \node [circle, draw, fill=black!100, inner sep=2pt, minimum width=2pt] (4) at (2,1,0) {};
	 \node [circle, draw, fill=black!100, inner sep=2pt, minimum width=2pt] (5) at (2,0,1) {};
	 \node [circle, draw, fill=black!100, inner sep=2pt, minimum width=2pt] (6) at (0,2,1) {};
	 \node [circle, draw, fill=black!100, inner sep=2pt, minimum width=2pt] (7) at (1,2,0) {};
	 \node [circle, draw, fill=black!100, inner sep=2pt, minimum width=2pt] (8) at (1,0,2) {};
	 \node [circle, draw, fill=black!100, inner sep=2pt, minimum width=2pt] (9) at (0,1,2) {};
	 \node [circle, draw, fill=black!100, inner sep=2pt, minimum width=2pt] (10) at (1,1,1) {};

 	 \draw   	 (0) -- (4,0,0) ;
 	 \draw   	 (0) -- (0,4,0) ;
 	 \draw   	 (0) -- (0,0,4) ;
	 \draw   	 (4) -- (9) ;
	 \draw   	 (6) -- (7) ;
	 \draw   	 (5) -- (6) ;
	 \draw   	 (7) -- (8) ;
	 \draw   	 (8) -- (9) ;
	 \draw   	 (4) -- (5) ;

\end{tikzpicture}

 	& \hspace{.1in}
	
\begin{tikzpicture}[scale=0.8]
	\draw[fill=blue!25] (3,0) -- (0,3) -- (0,0) -- (3,0) -- cycle;

	
	\node [circle, draw, fill=black!100, inner sep=2pt, minimum width=2pt] (0) at (0,0) {};

	 \node [circle, draw, fill=black!100, inner sep=2pt, minimum width=2pt] (1) at (3,0) {};
	 \node [circle, draw, fill=black!100, inner sep=2pt, minimum width=2pt] (2) at (0,3) {};
	 \node [circle, draw, fill=black!100, inner sep=2pt, minimum width=2pt] (3) at (0,0) {};
	 \node [circle, draw, fill=black!100, inner sep=2pt, minimum width=2pt] (4) at (2,1) {};
	 \node [circle, draw, fill=black!100, inner sep=2pt, minimum width=2pt] (5) at (2,0) {};
	 \node [circle, draw, fill=black!100, inner sep=2pt, minimum width=2pt] (6) at (0,2) {};
	 \node [circle, draw, fill=black!100, inner sep=2pt, minimum width=2pt] (7) at (1,2) {};
	 \node [circle, draw, fill=black!100, inner sep=2pt, minimum width=2pt] (8) at (1,0) {};
	 \node [circle, draw, fill=black!100, inner sep=2pt, minimum width=2pt] (9) at (0,1) {};
	 \node [circle, draw, fill=black!100, inner sep=2pt, minimum width=2pt] (10) at (1,1) {};

 	 \draw   	 (0) -- (4,0) ;
 	 \draw   	 (0) -- (0,4) ;
	 
	 \draw   	 (4) -- (9) ;
	 \draw   	 (6) -- (7) ;
	 \draw   	 (5) -- (6) ;
	 \draw   	 (7) -- (8) ;
	 \draw   	 (8) -- (9) ;
	 \draw   	 (4) -- (5) ;

\end{tikzpicture}

 	& \hspace{.1in}
	
\begin{tikzpicture}[scale=0.8]
	\draw[fill=blue!25] (3,3) -- (0,0) -- (0,3) -- (3,3) -- cycle;

	
	\node [circle, draw, fill=black!100, inner sep=2pt, minimum width=2pt] (0) at (0,0) {};

	 \node [circle, draw, fill=black!100, inner sep=2pt, minimum width=2pt] (1) at (3,3) {};
	 \node [circle, draw, fill=black!100, inner sep=2pt, minimum width=2pt] (2) at (0,0) {};
	 \node [circle, draw, fill=black!100, inner sep=2pt, minimum width=2pt] (3) at (0,3) {};
	 \node [circle, draw, fill=black!100, inner sep=2pt, minimum width=2pt] (4) at (2,2) {};
	 \node [circle, draw, fill=black!100, inner sep=2pt, minimum width=2pt] (5) at (2,3) {};
	 \node [circle, draw, fill=black!100, inner sep=2pt, minimum width=2pt] (6) at (0,1) {};
	 \node [circle, draw, fill=black!100, inner sep=2pt, minimum width=2pt] (7) at (1,1) {};
	 \node [circle, draw, fill=black!100, inner sep=2pt, minimum width=2pt] (8) at (1,3) {};
	 \node [circle, draw, fill=black!100, inner sep=2pt, minimum width=2pt] (9) at (0,2) {};
	 \node [circle, draw, fill=black!100, inner sep=2pt, minimum width=2pt] (10) at (1,2) {};

 	 \draw   	 (0) -- (4,0) ;
 	 \draw   	 (0) -- (0,4) ;
	 
	 \draw   	 (4) -- (9) ;
	 \draw   	 (6) -- (7) ;
	 \draw   	 (5) -- (6) ;
	 \draw   	 (7) -- (8) ;
	 \draw   	 (8) -- (9) ;
	 \draw   	 (4) -- (5) ;

\end{tikzpicture}
 	\\
\end{tabular}
\vspace{-0.2cm}
\caption{From left-to-right, the geometric realization of the $3^{rd}$ edgewise subdivision of the $2$-simplex, its unimodularly equivalent projection into $\R^2$, and the equivalent triangulation of the $s$-lecture hall simplex $P_n^s$ with $s = (r,r,\ldots,r)$ for $n = 2$ and $r=3$. }
\label{fig: barycentric subdivision} 
\end{figure}

\subsection{Derangements of colored permutations}
\label{subsec: derangements of colored permutations}
For integers $n,r\geq 1$, we say that a pair $(\pi,c)$ for $\pi\in\mathfrak{S}_n$ and $c\in\{0,\ldots,r-1\}^n$ is an {\bf $r$-colored permutation} (of length $n$).  
The value $c_i$ is called the {\bf color} of $\pi_i$ for each $i\in[n]$.  
We denote the collection of all $r$-colored permutations  (of length $n$) by $\Z_{r}\wr\mathfrak{S}_n$.  
The $r$-colored permutation $(\pi,c)$ is typically denoted as $\pi_1^{c_1}\pi_2^{c_2}\cdots\pi_n^{c_n}$.
We say that an index $i\in[n]$ is a {\bf descent} in $(\pi,c)$ if either $c_i>c_{i+1}$ or $c_i = c_{i+1}$ and $\pi_i>\pi_{i+1}$, where we assume that $\pi_{n+1} = n+1$ and $c_{n+1} = 0$.  
We also say that $i$ is an {\bf excedance} of $(\pi,c)$ if either $\pi_i>i$ or $\pi_i = i$ and $c_i>0$.  
We then denote the number of descents and excedances in $(\pi,c)$ by $\des(\pi,c)$ and $\exc(\pi,c)$, respectively.  
An $r$-colored permutation $(\pi,c)$ is called a {\bf derangement} if it has no fixed points of color $0$, and we denote the collection of all derangements in $\Z_{r}\wr\mathfrak{S}_n$ by $\mathfrak{D}_{n,r}$. 
Given these definitions, one can define $r$-colored analogues of the Eulerian and derangement polynomials.  
Specifically, define the polynomials
\[
A_{n,r}(z) :=\sum_{(\pi,c)\in\Z_{r}\wr\mathfrak{S}_n}z^{\des(\pi,c)}
\qquad
\mbox{and}
\qquad
d_{n,r}(z) :=\sum_{(\pi,c)\in\mathfrak{D}_{n,r}}z^{\exc(\pi,c)}. 
\]

Just like for regular permutations, descents and excedances are equidistributed in $\Z_{r}\wr\mathfrak{S}_n$.

\begin{theorem}
\cite[Theorem 3.15]{Ste92}
\label{theorem: colored permutations equidistributed}
For positive integers $n$ and $r$,
\[
A_{n,r}(z) = \sum_{\sigma \in \Z_{r}\wr\mathfrak{S}_n}z^{\text{exc}(\sigma)}.
\]
\end{theorem}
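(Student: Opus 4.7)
The plan is to construct an explicit bijection $\Phi : \Z_r \wr \mathfrak{S}_n \to \Z_r \wr \mathfrak{S}_n$ satisfying $\des(\Phi(\sigma)) = \exc(\sigma)$, thereby establishing the equidistribution. The framework is an adaptation of Foata's classical fundamental transformation from the uncolored case $r=1$: in the classical case, one writes $\pi$ in a canonical cycle form (each cycle rotated so that a distinguished element appears first, and the cycles linearly ordered by a fixed convention) and then erases the parentheses to obtain a new one-line permutation whose descents are in bijection with the excedances of $\pi$.

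To lift this construction to the colored setting, I would view each color $c_i$ as a label attached to the directed edge $i \mapsto \pi_i$ in the functional digraph of $\pi$. After applying the classical Foata map to the underlying $\pi$, each position of the resulting word inherits the color of the edge that runs through it, producing a colored permutation $\Phi(\sigma)$. This is a bijection: its inverse is obtained by reinserting parentheses before each left-to-right maximum of the underlying one-line word and sliding the colors back onto the reconstructed arrows.

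The core step is to verify $\des(\Phi(\sigma)) = \exc(\sigma)$. In the colored setting, descents split into two types, namely value-drops at equal colors (the case $c_i = c_{i+1}$ with $\pi_i > \pi_{i+1}$) and color-drops (the case $c_i > c_{i+1}$), and excedances similarly split into proper value excedances ($\pi_i > i$) and positive-color fixed points ($\pi_i = i$ with $c_i > 0$). The classical Foata argument already matches proper value excedances bijectively with value-drop descents. What remains is to match positive-color fixed points with color-drop descents: a fixed point $\pi_i = i$ with $c_i > 0$ appears as a singleton cycle in the canonical form, and once the cycles are concatenated its color sits immediately before a position whose color is smaller (either another cycle head, or, for the rightmost singleton, the implicit boundary where $c_{n+1} = 0$), producing exactly one color-drop descent.

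The principal obstacle I anticipate is the bookkeeping in this last step: one must verify that the canonical placement of singleton cycles, together with the boundary convention $\pi_{n+1} = n+1$, $c_{n+1} = 0$, produces precisely the required color drops without introducing spurious descents or failing to register any, and that the cycle-head positions created by concatenation do not accidentally alter the value-drop accounting handled by the classical bijection. This case analysis is the colored analogue of (and only slightly more intricate than) the classical Foata argument, and is the content of the proof in \cite[Theorem 3.15]{Ste92}.
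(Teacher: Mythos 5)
First, a note on scope: the paper does not prove this statement at all --- it is imported verbatim from Steingr\'imsson's thesis with a citation --- so there is no internal argument to compare yours against, and any proof you supply must stand entirely on its own. Judged on those terms, your proposal has a fatal structural flaw. You define $\Phi(\sigma)$ by ``applying the classical Foata map to the underlying $\pi$'' and then letting positions inherit colors, so the underlying uncolored permutation of $\Phi(\sigma)$ depends only on $\pi$ and not on the coloring $c$. Now take $n=2$, $r=2$ and compare $\sigma_1 = 1^0 2^0$ with $\sigma_2 = 1^1 2^1$; both have underlying permutation the identity, while $\exc(\sigma_1)=0$ and $\exc(\sigma_2)=2$. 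Checking the eight elements of $\Z_2\wr\mathfrak{S}_2$ against the paper's descent convention ($\pi_3=3$, $c_3=0$) shows that the \emph{unique} element with $\des=0$ is $1^0 2^0$ and the \emph{unique} element with $\des=2$ is $2^1 1^1$. These have different underlying permutations, so no map of your form can send $\sigma_1$ and $\sigma_2$ to the correct targets simultaneously: the underlying word of $\Phi(\sigma)$ must depend on the colors, not just on $\pi$. The obstruction persists for every $n$ when $r=2$: the all-$0$-colored identity (with $\exc=0$) must map to itself, while the all-$1$-colored identity (with $\exc=n$) must map to $n^1(n-1)^1\cdots 1^1$, whose underlying permutation is the reversal.

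The conceptual source of the failure is the step you describe as bookkeeping. In $\Z_r\wr\mathfrak{S}_n$ a value drop $\pi_i>\pi_{i+1}$ contributes a descent \emph{only when} $c_i=c_{i+1}$, and a color rise $c_i<c_{i+1}$ suppresses a descent no matter what the values do; so descents are not a disjoint union of independent ``value-drop'' and ``color-drop'' events, and the classical excedance--descent matching cannot be run in parallel with a separate matching of positive-color fixed points to color drops. This is also not what the cited source does: Steingr\'imsson's proof of the equidistribution is not a Foata-type bijection (he computes the two distributions and explicitly raised the problem of finding a direct bijection, which was only resolved much later, by Faliharimalala and Zeng, with a substantially more elaborate construction). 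If you want a self-contained argument at the level of this paper, a safer route is non-bijective: show that both $\sum_\sigma z^{\des(\sigma)}$ and $\sum_\sigma z^{\exc(\sigma)}$ satisfy the same recursion or equal the same explicit closed form, or reduce each to the $s$-inversion-sequence model with $s=(r,2r,\ldots,nr)$.
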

A useful corollary to this is that we can express the derangement polynomial in the following way.

\begin{corollary}
\label{cor: colored derangement formula}
For positive integers $n$ and $r$, 
$$d_{n,r}(z) = \sum_{k = 0}^n(-1)^{n-k}\binom{n}{k}A_{k,r}(z).$$
\end{corollary}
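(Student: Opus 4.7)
The proof is a standard binomial inversion that hinges critically on Theorem~\ref{theorem: colored permutations equidistributed}. Because $d_{n,r}(z)$ is defined via the excedance statistic rather than the descent statistic, working with descents would not directly yield the identity: the descent statistic is sensitive to the deletion of fixed points (it depends on local adjacencies), whereas the excedance statistic behaves well under passage to the sub-permutation obtained by deleting fixed points. I would therefore begin by invoking Theorem~\ref{theorem: colored permutations equidistributed} to rewrite
$$
A_{n,r}(z) = \sum_{(\pi,c)\in\Z_{r}\wr\mathfrak{S}_n} z^{\exc(\pi,c)}.
$$

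Next, I would partition $\Z_{r}\wr\mathfrak{S}_n$ according to the set $F\subseteq [n]$ of \emph{color-zero fixed points} of $(\pi,c)$, meaning the set of indices $i$ with $\pi_i=i$ and $c_i=0$. For a fixed subset $F$ of size $n-k$, the colored permutations whose color-zero fixed point set is exactly $F$ are in bijection with $\mathfrak{D}_{k,r}$ by restricting to $[n]\setminus F$ and then relabeling via the unique order-preserving bijection between $[n]\setminus F$ and $[k]$ (carrying the colors along). The crucial point is that this restrict-and-relabel operation preserves the excedance statistic: a color-zero fixed point contributes nothing to $\exc(\pi,c)$, and the excedance condition ``$\pi_i>i$ or ($\pi_i=i$ and $c_i>0$)'' is invariant under simultaneous order-preserving relabeling of indices and values.

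Summing these contributions and grouping by $k=n-|F|$ produces
$$
A_{n,r}(z) = \sum_{k=0}^n \binom{n}{k}\, d_{k,r}(z),
$$
and standard binomial inversion on this identity yields
$$
d_{n,r}(z) = \sum_{k=0}^n (-1)^{n-k}\binom{n}{k}\, A_{k,r}(z),
$$
as desired. The only genuine content beyond the inclusion-exclusion template is the excedance-preservation property of restrict-and-relabel used in the previous paragraph; this is precisely what forces the appeal to Theorem~\ref{theorem: colored permutations equidistributed} instead of a direct argument with descents.
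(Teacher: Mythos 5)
Your argument is correct and is essentially the paper's own proof: both rest on the restrict-and-relabel bijection that removes color-zero fixed points while preserving excedances, together with Theorem~\ref{theorem: colored permutations equidistributed} to identify the excedance generating function over $\Z_r\wr\mathfrak{S}_k$ with $A_{k,r}(z)$. The only cosmetic difference is that you first establish $A_{n,r}(z)=\sum_{k}\binom{n}{k}d_{k,r}(z)$ and then apply binomial inversion, whereas the paper runs inclusion--exclusion directly over the sets of permutations fixing a prescribed subset pointwise with color zero; these are the same computation.
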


\begin{proof}
For some subset of indices $T \subseteq [n]$, let $\mathfrak{S}_{n,r,T}$ be the set of colored permutations such that $\pi_i = i$ and $c_i = 0$ for all $i \in T$. 
Using The Principle of Inclusion-Exclusion, we can write the derangement polynomial as
$$d_{n,r}(z) = \sum_{T \subseteq \{1,2,3,...,n\}}(-1)^{|T|}\sum_{\sigma \in \mathfrak{S}_{n,r,T}}z^{\text{exc}(\sigma)}.$$
Note that $\mathfrak{S}_{n,r,T}$ is in bijection with $\Z_r\wr\mathfrak{S}_{n-|T|}$ in a way that preserves the number of excedances; namely, by removing the elements whose indices are in $T$. Therefore,
$$d_{n,r}(z) = \sum_{T \subseteq \{1,2,3,...,n\}}(-1)^{|T|}\sum_{\sigma \in \Z_r\wr\mathfrak{S}_{n-|T|}}z^{\text{exc}(\sigma)}.$$
Combining this with Theorem \ref{theorem: colored permutations equidistributed} finishes the proof.
\end{proof}

In \cite{SV15}, it was shown that the colored Eulerian polynomials $A_{n,r}(z)$ are also realized as $s$-Eulerian polynomials:
\begin{proposition}
\cite[Lemma 3.5]{SV15}
\label{prop: colored eulerian polynomials}
For positive integers $n$ and $r$, $A_{n,r}(z)$ is equal to the $s$-Eulerian polynomial corresponding to $s = (r,2r,\ldots,nr)$.
\end{proposition}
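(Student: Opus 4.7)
The plan is to produce a descent-preserving bijection $\Phi \colon \Z_r \wr \mathfrak{S}_n \to I_n^s$ for $s = (r, 2r, \ldots, nr)$ and then convert between descents and ascents via Theorem~\ref{thm: equidistributed} to match $E_n^s(z)$. The cardinalities already agree, since $|I_n^s| = r \cdot 2r \cdots nr = n!\,r^n = |\Z_r \wr \mathfrak{S}_n|$, so the content lies entirely in finding the right statistic-preserving bijection.

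I would build $\Phi$ using the unique factorization of each coordinate $e_i \in \{0, 1, \ldots, ir - 1\}$ as $e_i = i c_i + \alpha_i$ with $c_i \in [r-1]_0$ and $\alpha_i \in [i-1]_0$. Setting $c_i$ equal to the color of $\pi_i$ and $\alpha_i$ equal to a classical Lehmer-type inversion coordinate of $\pi$, one computes
\[
\frac{e_i}{s_i} = \frac{c_i}{r} + \frac{\alpha_i}{ir}.
\]
Because the perturbation $\alpha_i/(ir)$ lies strictly in $[0, 1/r)$, the comparison $e_i/s_i > e_{i+1}/s_{i+1}$ resolves lexicographically: it holds if and only if $c_i > c_{i+1}$, or $c_i = c_{i+1}$ and $\alpha_i/i > \alpha_{i+1}/(i+1)$. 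This matches the definition of a colored-permutation descent provided the uncolored encoding $\pi \mapsto (\alpha_1, \ldots, \alpha_n)$ satisfies $\alpha_i/i > \alpha_{i+1}/(i+1) \iff \pi_i > \pi_{i+1}$. In the case $r = 1$ this is exactly the descent-preserving property underlying the identity $h^\ast(P_n^{(1, \ldots, n)}; z) = A_n(z)$ from \cite{SS12}, and the colored version simply reuses the same encoding on the $\alpha$-coordinates.

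With $\Phi$ in hand, summing $z^{\des}$ on both sides gives
\[
A_{n,r}(z) = \sum_{e \in I_n^s} z^{\des(e)} = \sum_{e \in I_n^s} z^{\asc(e)} = E_n^s(z),
\]
where Theorem~\ref{thm: equidistributed} supplies the middle identity. The main obstacle I anticipate is the boundary behavior at $i = n$: the colored-descent condition there reduces to $c_n > 0$, while the inversion-sequence descent condition reads $e_n > 0 \iff n c_n + \alpha_n > 0$; reconciling these pins down $\alpha_n = 0$ whenever $c_n = 0$, which means the $\alpha$-encoding cannot be purely local at the last coordinate but must absorb some global information about $\pi$. A clean way to sidestep this is to dualize using the involution of Theorem~\ref{thm: equidistributed} and run the same argument via ascents, where the problematic boundary is the index $0$, and there both $e_0 = 0$ and $\pi_0$ are handled trivially by the convention $s_0 = 1$.
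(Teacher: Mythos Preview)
The paper does not give its own proof of this proposition; it is quoted from \cite[Lemma~3.5]{SV15}. However, the bijection underlying that lemma is written out later in the paper, inside the proof of Lemma~\ref{lemma: colored derangements with bad numbers}, and comparing your sketch to that map exposes a genuine gap.

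Your plan hinges on an encoding $\pi\mapsto(\alpha_1,\dots,\alpha_n)\in I_n^{(1,\dots,n)}$ with the \emph{position-wise} property
\[
\frac{\alpha_i}{i}>\frac{\alpha_{i+1}}{i+1}\quad\Longleftrightarrow\quad \pi_i>\pi_{i+1}\qquad(i\in[n-1]).
\]
No such bijection exists. For the full descent count to match you would also need the boundary condition at $i=n$, which forces $\alpha_n=0$ identically (since $r=1$ gives no colored descent at $n$, while $\alpha_n>0$ always creates an inversion-sequence descent there); but then the image has size at most $(n-1)!$, not $n!$. More concretely, already for $n=3$ the unique element of $I_3^{(1,2,3)}$ with two descents is $(0,1,1)$, whose descents sit at positions $2$ and $3$, whereas the permutation $321$ has its descents at positions $1$ and $2$. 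So a position-preserving descent encoding cannot exist, and the lexicographic step of your argument---which is intrinsically position-by-position---collapses.

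The fix in \cite{SV15}, reproduced in the paper as the map $\Psi$, is to \emph{reverse} the indexing: one sends $\pi_1^{c_1}\cdots\pi_n^{c_n}$ to $e$ with $e_i=i\,c_{\,n-i+1}+t_{\,n-i+1}$, where $t_k=|\{j>k:\pi_j<\pi_k\}|$. This lands in $I_n^s$ because $t_{n-i+1}\le i-1$, and it converts descents of $(\pi,c)$ at position $j$ into \emph{ascents} of $e$ at position $n-j$, giving $\des(\pi,c)=\asc(e)$ directly. Your closing suggestion to ``dualize and run via ascents'' is gesturing in this direction, but the involution of Theorem~\ref{thm: equidistributed} (coordinate-wise negation) does not supply the needed reversal; the reindexing $i\leftrightarrow n-i+1$ has to be built into the bijection itself.
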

Given Proposition~\ref{prop: colored eulerian polynomials}, we would then like to observe that the derangement polynomials $d_{n,r}(z)$ can similarly be computed using $s$-derangement polynomials.  
The $s$-derangement polynomial corresponding to $s = (r,2r,\ldots,nr)$ will not be equal to $d_{n,r}(z)$, but this is not surprising since $d_{n,r}(z)$ is not necessarily symmetric. 
Instead, we will express $d_{n,r}(z)$ as two $s$-derangement polynomials. 
To do this, we will also consider the sequence $s = (2r,\ldots,nr)$. 
This is in part motivated by subsection ~\ref{subsec: derangements}, where the sequence $s = (2,3,\ldots,n)$ gave the regular derangement polynomials $d_n(z)$.
\begin{theorem}
\label{theorem: colored derangement sum}
For positive integers $n$ and $r$, let $s = (2r,3r,\ldots,nr)$, and $\mu = (r,2r,\ldots,nr)$. 
Then
\[
d^s_{n-1}(z) + d^\mu_n(z) = d_{n,r}(z).
\]
\end{theorem}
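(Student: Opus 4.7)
My plan is to prove this identity by a bijective argument in the spirit of Theorem~\ref{thm: derangements}. By Proposition~\ref{prop: formula for box polynomial}, both $d_{n-1}^s(z)$ and $d_n^\mu(z)$ are descent generating functions over inversion sequences, and the first step is to amalgamate the two indexing sets into one. Introduce
\[
U_n^r := \{(e_1,\ldots,e_n) \in \Z^n : 0 \leq e_i < ir,\ e_n \neq 0,\ \tfrac{e_i}{ir} \neq \tfrac{e_{i+1}}{(i+1)r}\ \forall\, i \in [n-1]\}
\]
with the conventions $e_0 := 0$ and $e_{n+1} := 0$. The set $\widetilde{I}_n^\mu$ is precisely $\{e \in U_n^r : e_1 \neq 0\}$, since the only $\widetilde{I}_n^\mu$-constraint not imposed by $U_n^r$ is the boundary condition $e_1 \neq 0$. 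Meanwhile, the index-shift $(f_1,\ldots,f_{n-1}) \mapsto (0, f_1, \ldots, f_{n-1})$ identifies $\widetilde{I}_{n-1}^s$ bijectively with $\{e \in U_n^r : e_1 = 0\}$: the bounds $f_{i-1} < ir = s_{i-1}$, the boundary conditions $f_1, f_{n-1} \neq 0$, and the adjacent-ratio conditions all translate correctly, and the descent statistic is preserved. Consequently,
\[
d_{n-1}^s(z) + d_n^\mu(z) \;=\; \sum_{e \in U_n^r} z^{\des(e)}.
\]

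The heart of the proof is then to construct a bijection $\Psi : U_n^r \to \mathfrak{D}_{n,r}$ sending descents to excedances, generalizing the cycle-building algorithm in the proof of Theorem~\ref{thm: derangements}. The natural approach is to decompose each entry $e_i = r\,p_i + c_i$ with $p_i \in \{0,\ldots,i-1\}$ and $c_i \in \{0,\ldots,r-1\}$, interpreting $p_i$ as an insertion-position indicator and $c_i$ as a color. Reading the entries from right to left, the position parts drive the close-or-insert algorithm of Theorem~\ref{thm: derangements} on the set $A = [n]$ (with zeros starting new cycles at $\min(A)$ and nonzero $p_i$'s inserting the $p_i$-th smallest available element into the current cycle), while the color parts are recorded as colors on the corresponding cycle elements. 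The two defining constraints of $U_n^r$ (namely $e_n \neq 0$ together with no equal adjacent ratios) must be calibrated to preclude the formation of any singleton cycle of color $0$, so that the output lies in $\mathfrak{D}_{n,r}$. The inverse operation linearizes the cycles of a colored derangement in the order of their minimum elements.

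The main obstacle will be twofold. First, the color-assignment rules in $\Psi$ must be designed so that every element of $U_n^r$ maps to a bona fide colored derangement rather than a colored permutation containing a forbidden zero-color fixed point; since the naive decomposition can produce such singletons, the precise assignment (which element in each cycle receives which of the neighboring $c_i$'s) must be chosen to match exactly the no-equal-adjacent-ratio condition. Second, verifying the descent/excedance correspondence requires a careful case analysis, since the inequality $\tfrac{e_i}{ir} > \tfrac{e_{i+1}}{(i+1)r}$ couples position and color parts nontrivially, and one must separately track cycle-building descents (paralleling the uncolored argument in Theorem~\ref{thm: derangements}) and fixed-point excedances produced by nonzero-color singletons. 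A useful consistency check is the $r = 1$ specialization: there $\widetilde{I}_n^\mu = \emptyset$ and $U_n^r$ reduces to a shifted copy of $\widetilde{I}_{n-1}^{(2,3,\ldots,n)}$, so the identity becomes exactly Theorem~\ref{thm: derangements}. As a backup, one may apply inclusion-exclusion via Möbius inversion on the adjacency-equality conditions defining $\widetilde{I}_n^\mu$ and $\widetilde{I}_{n-1}^s$, reducing both sides to $\Z$-linear combinations of $A_{k,r}(z) = E_k^{\mu_k}(z)$ and matching via Corollary~\ref{cor: colored derangement formula} and Proposition~\ref{prop: colored eulerian polynomials}.
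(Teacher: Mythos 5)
Your first step is correct and is in fact a cleaner phrasing of half of the paper's Lemma~\ref{lemma: colored derangements with bad numbers}: the shift $(f_1,\ldots,f_{n-1})\mapsto(0,f_1,\ldots,f_{n-1})$ does identify $\widetilde{I^s_{n-1}}$ with $\{e\in U_n^r : e_1=0\}$ preserving $\des$, and $\widetilde{I^\mu_n}=\{e\in U_n^r : e_1\neq 0\}$, so $d^s_{n-1}(z)+d^\mu_n(z)=\sum_{e\in U_n^r}z^{\des(e)}$. The $r=1$ consistency check is also right. But the proof stops exactly where the real work begins: you never construct the bijection $\Psi: U_n^r\to\mathfrak{D}_{n,r}$ taking $\des$ to $\exc$, and you yourself identify the two points where the naive construction breaks (zero-color singleton cycles escaping $\mathfrak{D}_{n,r}$, and the fact that the inequality $\tfrac{e_i}{ir}>\tfrac{e_{i+1}}{(i+1)r}$ couples the position part $p_i$ and color part $c_i$ nontrivially) without resolving either. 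As it stands this is a plan plus a list of obstacles, not a proof.

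It is worth noting that the paper deliberately avoids the direct $\des$-to-$\exc$ bijection you are aiming for. Instead it transports $U_n^r$ to the set of colored permutations with no ``bad'' numbers via the Savage--Visontai map $\Psi$, so that the statistic on the permutation side remains $\des$ rather than $\exc$; it then performs inclusion--exclusion over the bad numbers, proving $\sum_{\sigma\,:\,T\subseteq S_\sigma}z^{\des(\sigma)}=A_{n-|T|,r}(z)$ by an explicit insertion bijection, and matches the result against $d_{n,r}(z)=\sum_{k}(-1)^{n-k}\binom{n}{k}A_{k,r}(z)$, which itself rests on Steingr\'imsson's equidistribution of $\des$ and $\exc$ on $\Z_r\wr\mathfrak{S}_n$ (Theorem~\ref{theorem: colored permutations equidistributed} and Corollary~\ref{cor: colored derangement formula}). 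Your one-sentence ``backup'' is essentially this route, but it too is only a sketch: the claim that forcing a set of adjacency-equalities in the inversion sequences produces colored Eulerian polynomials of lower rank is precisely the nontrivial combinatorial identity that the paper's insertion bijection is built to prove, so it cannot be waved through by M\"obius inversion alone. To complete your argument you must either (i) fully specify and verify the color-assignment and descent/excedance analysis for your cycle-building map, or (ii) carry out the double inclusion--exclusion in detail, including the insertion bijection.
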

To prove this, we will show that the sum $d^s_{n-1}(z) + d^\mu_n(z)$ can also be expressed as the right-hand-side of Corollary \ref{cor: colored derangement formula}. This can be done by interpreting $\widetilde{I^s_{n-1}}\bigcup\widetilde{I^\mu_n}$ as a subset of the colored permutations, and using The Principle of Inclusion-Exclusion. 
To formalize this argument, we will use the following definition.
\begin{definition}
For a colored permutation $\sigma = (\pi,c) \in \Z_{r}\wr\mathfrak{S}_n$, we will say that $i \in [n]$ is $\textbf{bad}$ with respect to $\sigma$ if for $\pi_j = i$ it holds that
\begin{enumerate}
\item $\pi_j < \pi_k$ for every $k > j$,
\item $\pi_{j-1} < \pi_k$ for every $k > j-1$, and
\item $\pi_j$ and $\pi_{j-1}$ have the same color.
\end{enumerate}
Here the convention $\pi_0 = 0$ and $c_0 = 0$ is used, and we let $S_{\sigma}$ be the set of bad numbers in $\sigma$.
\end{definition}
We then have the following lemma.
\begin{lemma}
\label{lemma: colored derangements with bad numbers}
For positive integers $n$ and $r$, let $s = (2r,3r,\ldots,nr)$, and $\mu = (r,2r,\ldots,nr)$. 
Then
\[
d^s_{n-1}(z) + d^\mu_n(z) = \sum_{\sigma \in \Z_{r}\wr\mathfrak{S}_n \, : \, S_\sigma = \emptyset}z^{\des(\sigma)}.
\]
\end{lemma}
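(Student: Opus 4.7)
The plan is to exhibit a descent-preserving bijection
$$\Phi \colon \widetilde{I^s_{n-1}} \sqcup \widetilde{I^\mu_n} \longrightarrow \{\sigma \in \Z_r \wr \mathfrak{S}_n : S_\sigma = \emptyset\},$$
built on top of the Savage--Visontai bijection underlying Proposition~\ref{prop: colored eulerian polynomials}.

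First, I would make explicit the descent-preserving bijection $\Psi \colon I^\mu_n \to \Z_r \wr \mathfrak{S}_n$ implicit in Proposition~\ref{prop: colored eulerian polynomials}. The standard construction writes each entry uniquely as $e_i = a_i r + c_i$ with $a_i \in \{0, \ldots, i-1\}$ and $c_i \in \{0, \ldots, r-1\}$; the Lehmer-style code $(a_1, \ldots, a_n)$ determines a permutation $\pi$, and $(c_1, \ldots, c_n)$ supplies its colors. Since $a_1 = 0$ is forced, $e_1 = c_1$. Descent-preservation of $\Psi$ follows from Theorem~\ref{theorem: colored permutations equidistributed} combined with the ascent/descent equidistribution on $I^\mu_n$ from Theorem~\ref{thm: equidistributed}.

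Next, observe the natural embedding $\iota \colon I^s_{n-1} \hookrightarrow I^\mu_n$, $(e_1, \ldots, e_{n-1}) \mapsto (0, e_1, \ldots, e_{n-1})$, whose image is precisely the set of elements of $I^\mu_n$ with first coordinate $0$. Combining $\iota$ on the first summand with the identity on the second, and then post-composing with $\Psi$, yields a map $\widetilde{I^s_{n-1}} \sqcup \widetilde{I^\mu_n} \to \Z_r \wr \mathfrak{S}_n$; the two pieces land in colored permutations with $c_1 = 0$ (from $\widetilde{I^s_{n-1}}$) and $c_1 \neq 0$ (from $\widetilde{I^\mu_n}$) respectively, since $e_1 \neq 0$ is one of the defining conditions of $\widetilde{I^\mu_n}$. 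The plan is then to show that these two restrictions are bijections onto $\{\sigma : S_\sigma = \emptyset,\ c_1 = 0\}$ and $\{\sigma : S_\sigma = \emptyset,\ c_1 \neq 0\}$ respectively, yielding $\Phi$.

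The main obstacle is translating the remaining defining constraints of $\widetilde{I^\mu_n}$ and $\widetilde{I^s_{n-1}}$, namely the no-equal-fraction conditions between consecutive entries and the boundary condition at the last coordinate, into the single condition $S_\sigma = \emptyset$. Specifically, I would prove that an equality of adjacent ratios $\frac{e_i}{(\widetilde{\mu})_i} = \frac{e_{i+1}}{(\widetilde{\mu})_{i+1}}$ holds if and only if the corresponding pair of adjacent positions in $\Psi(e)$ contains two right-to-left minima sharing a color, in which case the element at the later position is a bad number; and that failure of the terminal boundary condition $e_n \neq 0$ (or its analogue for $\widetilde{s}$) similarly forces a bad number at the final position. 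Establishing this equivalence requires a careful bookkeeping of how the Lehmer-style encoding records both the right-to-left minimum structure of $\pi$ and the pattern of consecutive equal colors, with a case split according to whether each of $e_i, e_{i+1}$ is zero. This parallels the bijective argument used for Theorem~\ref{thm: derangements}, but is more intricate due to the extra color coordinate and the need to track two distinct inversion-sequence lengths simultaneously.
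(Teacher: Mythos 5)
Your overall architecture is the same as the paper's: prepend a zero to embed $\widetilde{I^s_{n-1}}$ into $I^\mu_n$, push both pieces through a Savage--Visontai-type bijection, and translate the no-equal-ratio conditions into the absence of bad numbers. But the specific encoding you choose breaks the key step. Writing $e_i = a_i r + c_i$ (quotient and remainder by $r$) gives $\frac{e_i}{ir} = \frac{a_i}{i} + \frac{c_i}{ir}$, and the condition $\frac{e_i}{ir} = \frac{e_{i+1}}{(i+1)r}$ is equivalent to $\frac{e_i}{i} = \frac{e_{i+1}}{i+1}$, i.e.\ $e_i = im$, $e_{i+1}=(i+1)m$ for some integer $m$ (by coprimality of $i$ and $i+1$). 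This mixes the $a$'s and $c$'s and does \emph{not} reduce to ``adjacent right-to-left minima of the same color.'' Concretely, for $n=2$, $r=3$, $\mu=(3,6)$, the sequence $e=(1,2)$ has $\frac{1}{3}=\frac{2}{6}$, so $e\notin\widetilde{I^\mu_2}$; yet your encoding gives colors $(c_1,c_2)=(1,2)$ and the identity permutation, a colored permutation with no bad numbers. A count for this case shows your candidate preimage of $\{\sigma: S_\sigma=\emptyset,\ c_1\neq 0\}$ has $10$ elements while $|\widetilde{I^\mu_2}|=8$, so the restriction you need is not a bijection and the generating polynomials do not agree. The encoding that works is the one where the \emph{color} is the quotient of $e_i$ by the index and the \emph{inversion count} is the remainder, i.e.\ $e_i = i\cdot c + t$ with $t\in[i-1]_0$ (the paper additionally reverses the indexing, so $e_i$ records the color and inversion count of position $n-i+1$, and the boundary condition $e_1\neq 0$ becomes $c_n\neq 0$, not $c_1\neq 0$). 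Then $\frac{e_i}{ir}=\frac{c}{r}+\frac{t}{ir}$ with $\frac{t}{ir}\in[0,\frac{1}{r})$, and equality of adjacent ratios forces equal colors and, again by coprimality of consecutive indices, both inversion counts to vanish --- which is exactly the badness condition.

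A second, smaller issue: descent-preservation of $\Psi$ cannot ``follow from'' Theorem~\ref{theorem: colored permutations equidistributed} together with Theorem~\ref{thm: equidistributed}. Equidistribution is an equality of generating polynomials over the full sets and says nothing about a particular bijection preserving the statistic pointwise; here you need the pointwise identity $\asc(e)=\des(\Psi(e))$ on the restricted subsets, which must be read off from (or cited for) the explicit construction of $\Psi$, as is done in \cite{SV15}.
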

\begin{proof}
The claim can be proved by interpreting elements in $\widetilde{I^s_{n-1}}$ and $\widetilde{I^\mu_n}$ as colored permutations using a map similar to $\Theta^{-1}$ from the proof of \cite[Lemma 3.5]{SV15}. 
Namely, for a permutation $\pi\in\mathfrak{S}_n$ and $i\in[n]$ we let 
\[
t_i :=|\{j > i \, : \, \pi_j<\pi_i\}|,
\]
denote the number of inversions of $\pi$ at $i$.  
The sequence $t = (t_1,\ldots,t_n)$ is called the inversion sequence (or Lehmer code) of $\pi$ and there exists a well-known bijection taking $t$ to $\pi$.  
We then define $\Psi: \Z_{r}\wr\mathfrak{S}_n\longrightarrow I_n^s$ where
\[
\Psi: \pi_1^{c_1}\cdots\pi_n^{c_n} \longmapsto (c_n+t_n,2c_{n-1}+t_{n-1},\ldots,nc_1+t_1).
\]
The inverse mapping $\Psi^{-1}$ is given by sending 
\[
\Psi^{-1}: (e_1,\ldots,e_n)\longmapsto \pi_1^{c_1}\cdots\pi_n^{c_n},
\]
where $\pi_1\cdots\pi_n$ is the permutation with inversion sequence
\[
t = (e_{n}-nc_1,e_{n-1}-(n-1)c_2,\ldots,e_1-c_n),
\]
and 
$
c_i = \left\lfloor\frac{e_{n-i+1}}{n-i+1}\right\rfloor
$ 
for each $i\in[n]$.
Next define $p:\widetilde{I^s_{n-1}}\rightarrow I^\mu_n$ as $p(e) = (0,e)$, and note that
\[
\Psi^{-1}(\widetilde{I^\mu_n}) = \{\sigma \in \Z_{r}\wr\mathfrak{S}_n \, : \,  S_\sigma = \emptyset \text{ and } c_n \neq 0 \},
\]
and
\[
\left(\Psi^{-1} \circ p\right)(\widetilde{I^s_{n-1}}) = \{\sigma \in \Z_{r}\wr\mathfrak{S}_n \, : \,  S_\sigma = \emptyset \text{ and } c_n = 0 \}.
\]
This follows from the definitions of $\widetilde{I^\mu_n}$ and $p(\widetilde{I^s_{n-1}})$. To prove it, pick an index $j \in \{1,2,\cdots,n\}$. Now it holds that $\frac{e_{n-j+1}}{(n-j+1)r} \neq \frac{e_{n-j+2}}{(n-j+2)r}$ for inversion sequences in both $\widetilde{I^\mu_n}$ and $p(\widetilde{I^s_{n-1}})$. This is true if and only if it does not hold that $c_j = c_{j-1}$ and $t_j = t_{j-1} = 0$ in $\Psi^{-1}(e)$, which is equivalent to $\pi_j$ not being bad. In $\widetilde{I^\mu_n}$, there is the additional constraint $0 \neq \frac{e_1}{r}$ which is equivalent to $c_n \neq 0$. On the other hand, in $p(\widetilde{I^s_{n-1}})$ we made sure that $c_n = 0$ from the definition of $p$.
This verifies the set equalities given above.
It then follows that 
\[
\Psi^{-1}(\widetilde{I^\mu_n}) \bigsqcup \Psi^{-1} \circ p(\widetilde{I^s_{n-1}}) = \{\sigma \in \Z_{r}\wr\mathfrak{S}_n \, : \, S_\sigma = \emptyset\}.
\]

Note that $\Psi$ has the property that $\asc(e) = \des(\Psi^{-1}(e))$, similar to \cite[Lemma 3.5]{SV15}.
It follows that $\asc(e) = \des(\Psi^{-1}(e))$ for $e \in \widetilde{I^\mu_n}$ and $\asc(e) = \asc(p(e)) = \des(\Psi^{-1}(p(e)))$ for $e \in \widetilde{I^s_{n-1}}$. Now we can conclude that
\begin{equation*}
d^s_{n-1}(z) + d^\mu_n(z) = \sum_{e \in \widetilde{I^s_{n-1}}}z^{\asc(e)} + \sum_{e \in \widetilde{I^\mu_n}}z^{\asc(e)}  = \sum_{\sigma \in \Z_{r}\wr\mathfrak{S}_n \, : \, S_\sigma = \emptyset}z^{\des(\sigma)}.
\end{equation*}
\end{proof}

With Lemma~\ref{lemma: colored derangements with bad numbers}, we are now ready to prove Theorem \ref{theorem: colored derangement sum}.

\begin{proof}[Proof of Theorem \ref{theorem: colored derangement sum}]
First, note that we can use Lemma \ref{lemma: colored derangements with bad numbers} to write
\[
	d^s_{n-1}(z) + d^\mu_n(z) = \sum_{\sigma \in \Z_{r}\wr\mathfrak{S}_n \, : \, S_\sigma = \emptyset}z^{\des(\sigma)} = \sum_{T \subseteq [n]}(-1)^{|T|}\sum_{\sigma \in \Z_{r}\wr\mathfrak{S}_n \, : \, T \subseteq S_\sigma}z^{\des{\sigma}}. 
\]
The second equality follows from The Principle of Inclusion-Exclusion. 
By Corollary~\ref{cor: colored derangement formula}, it now suffices to prove that
\begin{equation}
\label{eq: bad numbers eulerian}
\sum_{\sigma \in \Z_{r}\wr\mathfrak{S}_n \, : \, T \subseteq S_\sigma}z^{\des{\sigma}} = A_{n-|T|,r}(z).
\end{equation}
Let $\mathfrak{S}_{n,r,T}$ denote the set $\{\sigma \in \Z_{r}\wr\mathfrak{S}_n \, : \, T \subseteq S_\sigma\}$. To prove (\ref{eq: bad numbers eulerian}), we will find a bijection $f:\Z_r\wr\mathfrak{S}_{n-|T|} \rightarrow \mathfrak{S}_{n,r,T}$ such that $\des(\sigma) = \des(f(\sigma))$. 
Such a bijection $f$ is the following:

\begin{enumerate}
\item Take an element $\sigma = \pi_1^{c_1}\pi_2^{c_2}...\pi_{n-|T|}^{c_{n-|T|}} \in \Z_r\wr\mathfrak{S}_{n-|T|}$.
\item Replace each element $\pi_i = j$ with the $j^{th}$ smallest element of $[n]\setminus T$. (Note that this does not change the number of descents.)
\item We will now insert the numbers in $T$ into our half-finished permutation, in such a way that they become bad. Pick each $i\in T$ in order of size, starting with the smallest. If $i = 1$, insert $i$ at the front of $\sigma$ and give it color $0$. Otherwise, find the rightmost element $\pi_j$ such that $\pi_j < i$ and $\pi_j < \pi_k$ for every $k > j$. Give $i$ the same color as $\pi_j$ and insert it right after $\pi_j$.
\end{enumerate}

Notice first that $f$ is a well-defined function that maps to $\Z_{r}\wr\mathfrak{S}_n$. 
This follows if we can always find an index $j$ for every number $i \neq 1$, so that we can insert $i$ after $\pi_j$. Since we are inserting the numbers in order of size, when we insert $i \neq 1$, $1$ will already be in our permutation. Since $1$ satisfies the two conditions above, the set of indices $j$ where $\pi_j < i$ and $\pi_j < \pi_k$ for every $k > j$ is non-empty. Thus, the rightmost one exists.

Second, notice that $\des(f(\sigma)) = \des(\sigma)$. 
When we insert an element $i \neq 1$ after $\pi_j$, then no new descents are created since $\pi_j < i$. Also, if $(\pi_j,\pi_{j+1})$ was a descent earlier, then since $i$ gets the same color as $\pi_j$ and is greater, the pair $(i,\pi_{j+1})$ will still be a descent. If $i = 1$, then inserting it at the start and giving it color $0$ does not affect the number of descents.

Third, all the numbers in $T$ are bad in the resulting permutation. 
If $1 \in T$, then we will insert it at the start and give it color $0$, which makes it bad. 
After that, we will never insert anything in front of it, so it will always remain bad. For $i \neq 1$ in $T$, $i$ will be bad right after inserting it. 
The reason for this is that it will have the same color as the preceding element $\pi_j$, it will be greater than $\pi_j$, and $\pi_j$ will be smaller than all elements to the right of it. 
The only thing remaining to show is that $i$ is also smaller than everything to the right of it. 
Assuming that this is not true, pick the rightmost element $\pi_k$ such that $\pi_k < i$. 
This element must be smaller than everything to the right of it, but $\pi_j$ was the rightmost element satisfying this, which is a contradiction. 
Furthermore, the number $i$ will stay bad after the bijection is finished. 
This follows from the fact that we will never insert anything between $\pi_j$ and $i$ since all the elements inserted after $i$ will be greater than $i$. 
Also, inserting larger numbers into our permutation does not change the fact that $\pi_j$ or $i$ is smaller than everything to the right of it.

Finally, we can invert the function. To invert it, remove all numbers in $T$ from the permutation and undo step 2 from the description of the bijection. 
Thus, we conclude that $f:\Z_r\wr\mathfrak{S}_{n-|T|} \rightarrow \mathfrak{S}_{n,r,T}$ is a bijection for which $\des(f(\sigma)) = \des(\sigma)$.
By Corollary~\ref{cor: colored derangement formula}, this completes the proof. 
\end{proof}

\begin{example}
\label{ex: colored bijection example}
As an example of the bijection in the proof of Theorem \ref{theorem: colored derangement sum}, suppose that $n = 6$, $r = 3$, and $T = \{1,3,4\}$. 
Given an element $\sigma = 2^21^13^0 \in \Z_3\wr\mathfrak{S}_{3}$, applying step (2) in the description will transform it to $5^22^16^0$. 
Now we will insert the bad numbers $T = \{1,3,4\}$ one at a time as in step (3). 
First, the $1$ will end up at the front and get color zero, so the permutation becomes $1^05^22^16^0$. 
Next, the $3$ will be inserted after $2^1$ and get color $1$, so the permutation becomes $1^05^22^13^16^0$. 
Finally, the $4$ will be inserted after $3^1$ and get color $1$, so the final permutation is $1^05^22^13^14^16^0\in\Z_3\wr\mathfrak{S}_{6}$.
\end{example}

Theorem~\ref{theorem: colored derangement sum} demonstrates that the colored derangement polynomials $d_{n,r}(z)$ can also be realized in terms of $s$-derangement polynomials, similar to $d_n(z)$.  
However, since $d_{n,r}(z)$ is not symmetric in general, we needed to write it as a sum of two $s$-derangement polynomials, both of which are symmetric.  
In fact, this type of symmetric decomposition of a polynomial has recently been used both in the study of local $h$-polynomials \cite{A14} and in the study of unimodality of Ehrhart $h^\ast$-polynomials \cite{BS18,BJM16,S17}.  

Given a polynomial $p(z)\in\R[z]$ of degree at most $d$, it is an exercise in linear algebra to check that there exist unique polynomials $a(z),b(z)\in\R[z]$ such that 
\[
p(z) = a(z)+zb(z),
\]
where $a(z)$ is degree at most $d$, $b(z)$ is degree at most $d-1$, $a(z) = z^da(1/z)$, and $b(z) = z^{d-1}b(1/z)$.  
We call the ordered pair of polynomials $(a,b)$ the {\bf (symmetric) $\I_d$-decomposition} of $p$.  
The pair of $s$-derangement polynomials $(d_{n-1}^s,d_n^\mu)$ from Theorem~\ref{theorem: colored derangement sum} is indeed the $\I_n$-decomposition of $d_{n,r}(z)$.  
\begin{theorem}
\label{thm: colored derangements}
Let $r$ be a positive integer and let 
\[
s = (2r,3r,\cdots,nr)
\qquad
\mbox{and}
 \qquad
 \mu = (r,2r,3r,\ldots,nr).
 \]  
 The ordered pair of $s$-derangement polynomials $(d_{n-1}^s,d_n^\mu)$ is the $\I_n$-decomposition of $d_{n,r}(z)$.  
\end{theorem}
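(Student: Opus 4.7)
The plan is that Theorem~\ref{thm: colored derangements} is essentially a corollary of Theorem~\ref{theorem: colored derangement sum} once the symmetry conditions required by the $\mathcal{I}_n$-decomposition are checked. The key ingredient is the general fact, already noted in the preliminaries before Theorem~\ref{thm: reflexive}, that for any lattice simplex $\Delta$ the local $h^\ast$-polynomial $\ell^\ast(\Delta;z)$ is symmetric with respect to degree $\dim(\Delta)+1$; one also gets for free from the open-parallelepiped description $\ell^\ast(\Delta;z) = \sum_{x\in\Pi_\Delta^\circ\cap\Z^{n+1}} z^{x_{n+1}}$ that the constant term of $\ell^\ast(\Delta;z)$ vanishes, since every lattice point in the open parallelepiped has strictly positive height.

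Applied to $P_{n-1}^s$ (of dimension $n-1$) and $P_n^\mu$ (of dimension $n$), these observations give $\deg d_{n-1}^s\leq n-1$ with
\[
d_{n-1}^s(z) = z^n\, d_{n-1}^s(1/z),
\]
and $\deg d_n^\mu\leq n$ with $d_n^\mu(0)=0$ and
\[
d_n^\mu(z) = z^{n+1}\, d_n^\mu(1/z).
\]
The first of these is exactly the symmetry required of the ``$a$''-part of an $\mathcal{I}_n$-decomposition. For the ``$b$''-part, set $b(z):=d_n^\mu(z)/z$; because $d_n^\mu$ has zero constant term, $b$ is a polynomial of degree at most $n-1$, and substituting into the palindromicity identity for $d_n^\mu$ yields $b(z)=z^{n-1}b(1/z)$, which is precisely the symmetry required.

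Combining this with Theorem~\ref{theorem: colored derangement sum} gives
\[
d_{n,r}(z) \;=\; d_{n-1}^s(z) + d_n^\mu(z) \;=\; d_{n-1}^s(z) + z\, b(z),
\]
which by uniqueness of the $\mathcal{I}_n$-decomposition identifies $(d_{n-1}^s, d_n^\mu)$ as the claimed decomposition (with the second coordinate understood as the full ``$zb$'' summand rather than $b$ itself). There is no real obstacle here: the substantive combinatorial content is entirely absorbed into Theorem~\ref{theorem: colored derangement sum} via Lemma~\ref{lemma: colored derangements with bad numbers}, and all that remains is the routine bookkeeping of checking that $d_n^\mu(z)/z$ is a polynomial of the correct degree with the correct palindromicity, both of which follow immediately from the half-open parallelepiped interpretation of local $h^\ast$-polynomials.
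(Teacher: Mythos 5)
Your proposal is correct and follows essentially the same route as the paper: symmetry of local $h^\ast$-polynomials of lattice simplices, uniqueness of the $\I_n$-decomposition, and Theorem~\ref{theorem: colored derangement sum}. You are in fact slightly more careful than the paper's own two-line argument, since you explicitly verify that $d_n^\mu(0)=0$ so that $d_n^\mu(z)/z$ is a genuine polynomial of degree at most $n-1$ with the required palindromicity --- a point the paper leaves implicit when it identifies the pair $(d_{n-1}^s,d_n^\mu)$ with the decomposition $(a,b)$.
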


\begin{proof}
This result follows immediately from the uniqueness of $\I_d$-decompositions.  
In particular, if $\Delta$ is a $d$-dimensional simplex then its local $h^\ast$-polynomial satisfies
\[
\ell^\ast(\Delta;z) = z^{d+1}\ell^\ast\left(\Delta;\frac{1}{z}\right).
\]
In this case, since $d_{n-1}^s(z)$ and $d_n^\mu(z)$ are the local $h^\ast$-polynomials for the $s$-lecture all simplices $P_{n-1}^s$ and $P_n^\mu$, then they, respectively, satisfy
\[
d_{n-1}^s(z) = z^nd_{n-1}^s\left(\frac{1}{z}\right)
\qquad
\mbox{and}
\qquad
d_n^\mu(z) = z^{n+1}d_n^\mu\left(\frac{1}{z}\right).
\]
It follows by the uniqueness of $\I_d$-decompositions and Theorem~\ref{theorem: colored derangement sum} that $(d_{n-1}^s,d_n^\mu)$ is the $\I_n$-decomposition of $d_{n,r}(z)$.
\end{proof}
Our second main goal in this section has been to demonstrate that the local $h$-polynomials of well-studied subdivisions of a simplex can be realized as local $h^\ast$-polynomials of $s$-lecture hall simplices.  
In subsections~\ref{subsec: derangements} and~\ref{subsec: edgewise subdivision} we, respectively, saw that this is true for the barycentric and $r^{th}$ edgewise subdivision of a simplex.  
Combining these two, one consequence of Theorem~\ref{thm: colored derangements} is that the local $h$-polynomial of the $r^{th}$ edgewise subdivision of the barycentric subdivision of the $(n-1)$-simplex is the local $h^\ast$-polynomial of an $s$-lecture hall simplex.  
\begin{corollary}
\label{cor: colored local h-polynomial}
Let $r$ be a positive integer and let $s = (2r,3r,\cdots,nr).$
The local $h$-polynomial of the $r^{th}$ edgewise subdivision of the barycentric subdivision of $2^{[n]}$ is the local $h^\ast$-polynomial of the $s$-lecture hall simplex $P_{n-1}^s$.  
\end{corollary}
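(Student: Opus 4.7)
The plan is to combine Theorem~\ref{thm: colored derangements} with a standard identification (following \cite{A14,A16}) of the local $h$-polynomial of the $r$-th edgewise subdivision of the barycentric subdivision of $2^{[n]}$ as the symmetric ``first part'' of the $r$-colored derangement polynomial $d_{n,r}(z)$. More precisely, the key external input is that this local $h$-polynomial equals the component $a(z)$ in the $\I_n$-decomposition $d_{n,r}(z) = a(z) + z\,b(z)$. This extends the $r = 1$ case recalled in subsection~\ref{subsec: derangements}, where $d_{n,1}(z) = d_n(z)$ is already symmetric and is itself the local $h$-polynomial of the barycentric subdivision.

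Granting this identification, the argument is short. It equates the local $h$-polynomial of the $r$-th edgewise subdivision of the barycentric subdivision of $2^{[n]}$ with $a(z)$. Next, Theorem~\ref{thm: colored derangements} establishes that the pair $(d_{n-1}^s,\, d_n^\mu)$, with $s = (2r, 3r, \ldots, nr)$ and $\mu = (r, 2r, \ldots, nr)$, is the $\I_n$-decomposition of $d_{n,r}(z)$. By the uniqueness of $\I_n$-decompositions noted just before Theorem~\ref{thm: colored derangements}, we conclude $a(z) = d_{n-1}^s(z)$, and Definition~\ref{def: s-derangement polynomial} then identifies $d_{n-1}^s(z)$ with $\ell^\ast(P_{n-1}^s; z)$.

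The main obstacle is citing the Athanasiadis identification with the correct bookkeeping: one must verify that the edgewise-of-barycentric local $h$-polynomial corresponds to the degree-$n$ symmetric summand $a(z)$, rather than to the shift $z\,b(z)$. A self-contained alternative, using only results already proved in this paper, would apply Theorem~\ref{thm: local decomposition} with $\Omega$ the barycentric subdivision of $2^{[n]}$ and $\Omega'$ its $r$-th edgewise subdivision, expressing $h(\Omega'; z)$ as $\sum_{\Delta \in \Omega} h(\link_\Omega(\Delta); z)\,\ell_\Delta(\Omega'_\Delta; z)$. Here each restriction $\Omega'_\Delta$ is the $r$-th edgewise subdivision of the simplex $2^\Delta$, so each factor $\ell_\Delta(\Omega'_\Delta; z)$ is supplied by Theorem~\ref{thm: edgewise subdivision}; the local $h$-polynomial $\ell_{[n]}(\Omega'_{[n]}; z)$ can then be extracted via the inclusion--exclusion formula following equation~\eqref{eqn: local}. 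This approach is more computational but avoids any external citation.
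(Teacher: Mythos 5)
Your proposal is correct and follows essentially the same route as the paper: the paper's proof likewise cites Athanasiadis (Theorems 1.2 and 1.3 of \cite{A14}) to identify the local $h$-polynomial of the $r^{th}$ edgewise subdivision of the barycentric subdivision with the degree-$n$ symmetric summand $a$ of the $\I_n$-decomposition of $d_{n,r}(z)$, and then concludes via Theorem~\ref{thm: colored derangements}. The ``bookkeeping'' you worry about is exactly what those two cited results of \cite{A14} supply, so your primary argument matches the paper's; your sketched self-contained alternative via Theorem~\ref{thm: local decomposition} is not carried out and is not needed.
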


\begin{proof}
If $(a,b)$ denotes the $\I_n$-decomposition of $d_{n,r}(z)$, then by \cite[Theorem 1.2]{A14} with \cite[Theorem 1.3]{A14}, we can see that $a$ is the local $h$-polynomial of the $r^{th}$ edgewise subdivision of the barycentric subdivision of the $(n-1)$-simplex.  
Combining this observation with Theorem~\ref{thm: colored derangements} proves the result.
\end{proof}

\section{Applications}
\label{sec: applications}
In this section, we apply the results of Sections~\ref{sec: s-derangement polynomials} and~\ref{sec: examples} to address some open questions in the combinatorial literature.  
In subsection~\ref{subsec: symmetric decomposition} we affirmatively answer a conjecture posed in \cite[Question 4.11]{A16}, \cite[Conjecture 2.30]{A17}, and \cite[Conjecture 3.7.10]{S13}.
In subsection~\ref{subsec: order polytopes}, we show that all $s$-lecture hall order polytopes have a box unimodal triangulation, and use this result to provide a partial answer to a conjecture posed in \cite{BL16}.  
Similarly, in subsection~\ref{subsec: local order polytopes} we observe that all $s$-lecture hall order polytopes have unimodal local $h^\ast$-polynomials.

\subsection{Colored derangement polynomials}
\label{subsec: symmetric decomposition}
Our first application pertains to the symmetric decomposition of the colored derangement polynomial introduced in subsection~\ref{subsec: derangements of colored permutations}, and its relation to a well-studied conjecture on the $h$-vectors of flag simplicial complexes. 
A simplicial complex is called {\bf flag} if every minimal nonface contains two elements of the ground set. 
A topological subdivision $\Omega^\prime$ of a flag simplicial complex $\Omega$ is also called {\bf flag} if for every face $F\in\Omega$ the complex $\Omega^\prime_F$ is flag.  
A polynomial $p(z)\in\R[z]$ of degree at most $d$ is called {\bf $\gamma$-positive} (or {\bf $\gamma$-nonnegative}) \cite{A17} if when expressed in the basis $\left\{z^i(z+1)^{d-{2i}}\right\}_{i=0}^{\lfloor{d/2}\rfloor}$ for polynomials symmetric with respect to degree $d$ as
\[
p(z) = \sum_{i=0}^{\lfloor{d/2}\rfloor}\gamma_iz^i(z+1)^{d-2i}
\]
the coefficients $\gamma_0,\ldots,\gamma_{\lfloor{d/2}\rfloor}$ are all nonnegative.  
If $p(z)$ is the $h$-polynomial of a simplicial complex $\Omega$, then the vector of coefficients $\gamma(\Gamma) :=(\gamma_1,\ldots,\gamma_{\lfloor{d/2}\rfloor})$ is called the {\bf $\gamma$-vector} of $\Delta$.  
Gal's Conjecture \cite[Conjecture 2.1.7]{G05} claims that the $h$-polynomial of a flag homology sphere is always $\gamma$-nonnegative, and \cite[Conjecture 14.2]{PRW08}, \cite[Conjecture 1.4]{A12} further claimed that $\gamma(\Omega)\leq\gamma(\Omega^\prime)$ whenever $\Omega^\prime$ is a flag vertex-induced subdivision of a flag homology sphere $\Omega$.  
As seen from \cite[Proposition 5.3]{A12}, one can hope to use the $\gamma$-nonnegativity of local $h$-polynomials to prove the latter conjectures, and this idea was formalized in \cite[Conjecture 5.4]{A12}.  
One way to prove that a polynomial is $\gamma$-nonnegative is to show that it is both symmetric and real-rooted (see for instance \cite[Remark 3.1]{B15}).  
It is therefore of interest to know which local $h$-polynomials are real-rooted.  

Using combinatorial methods, \cite[Conjecture 5.4]{A12} has been verified for all subdivisions studied in Section~\ref{sec: examples} (see \cite[Section 4]{A16}).  
However, \cite[Question 4.11]{A16} further asked if the $\gamma$-nonnegativity of each of these local $h$-polynomials follows from real-rootedness.  
In \cite{Z95} real-rootedness of $d_n(z)$, the local $h$-polynomial of the barycentric subdivision of a simplex, is verified, and more recently in \cite{L16,Z16} the real-rootedness of the local $h$-polynomial of the $r^{th}$ edgewise subdivision of a simplex was shown.  
The results in Sections~\ref{sec: s-derangement polynomials} and~\ref{sec: examples} provide an alternative proof for these results, and also allow us to verify \cite[Question 4.11]{A16} for the final missing example.
Taking this one step further, \cite[Conjecture 2.30]{A17} and \cite[Conjecture 3.7.10]{S13}, claim that both polynomials in the $\I_n$-decomposition of $d_{n,r}(z)$ are real-rooted.  
The following theorem settles this collection of questions.  
\begin{theorem}
\label{thm: real-rooted symmetric decomposition}
If $(a,b)$ is the $\I_n$-decomposition of the colored derangement polynomial $d_{n,r}(z)$, then both $a$ and $b$ are real-rooted.  
In particular, the local $h$-polynomial of the $r^{th}$ edgewise subdivision of the barycentric subdivision of $2^{[n]}$ is real-rooted. 
\end{theorem}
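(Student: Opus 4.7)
The proof of this theorem should be essentially immediate from the machinery already assembled in Sections~\ref{sec: s-derangement polynomials} and~\ref{sec: examples}; nothing new needs to be invented. The plan is to simply invoke Theorem~\ref{thm: colored derangements} to identify $(a,b)$ explicitly as a pair of $s$-derangement polynomials, and then apply Theorem~\ref{thm: s-derangement real-rootedness} to each component separately.

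More concretely, I would first set $s = (2r,3r,\ldots,nr)$ and $\mu = (r,2r,\ldots,nr)$, which are the two sequences appearing in Theorems~\ref{theorem: colored derangement sum} and~\ref{thm: colored derangements}. By the uniqueness of the $\I_n$-decomposition, Theorem~\ref{thm: colored derangements} identifies $a = d_{n-1}^s(z)$ and $b = d_n^\mu(z)$. Since both of these are $s$-derangement polynomials for perfectly valid sequences of positive integers, Theorem~\ref{thm: s-derangement real-rootedness} applies directly and tells us that each is real-rooted. Thus $a$ and $b$ are both real-rooted, which proves the first assertion.

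For the second assertion, I would invoke Corollary~\ref{cor: colored local h-polynomial}, which says that the local $h$-polynomial of the $r^{th}$ edgewise subdivision of the barycentric subdivision of $2^{[n]}$ coincides with $d_{n-1}^s(z) = a$ for $s = (2r,3r,\ldots,nr)$. Since $a$ was just shown to be real-rooted, the conclusion follows.

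There is no real obstacle to the argument; the entire proof is a two-line deduction from previously established results. The ``hard work'' has already been done: Theorem~\ref{thm: s-derangement real-rootedness} encapsulates the interlacing-polynomial recursion from Lemma~\ref{lem: interlacing recursion} that gives real-rootedness of $d_n^s(z)$ for arbitrary $s$, and Theorem~\ref{thm: colored derangements} (via Theorem~\ref{theorem: colored derangement sum} and the bijective analysis involving ``bad'' indices) is what pins down the $\I_n$-decomposition in terms of these specific lecture hall simplices. The only thing to be careful about in the write-up is the order of exposition so that it is clear which results are being invoked for which half of the statement.
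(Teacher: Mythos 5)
Your proposal is correct and matches the paper's own proof exactly: the paper likewise combines Theorem~\ref{thm: colored derangements} (identifying $(a,b)$ as $(d_{n-1}^s, d_n^\mu)$) with Theorem~\ref{thm: s-derangement real-rootedness}, and deduces the second assertion from Corollary~\ref{cor: colored local h-polynomial}. No gaps.
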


\begin{proof}
The result follows by combining Theorem~\ref{thm: s-derangement real-rootedness} with Theorem~\ref{thm: colored derangements}.  
The latter statement is then seen by considering Corollary~\ref{cor: colored local h-polynomial}.
\end{proof}

\begin{remark}
[A second proof]
\label{rmk: with petter}
The real-rootedness of the $a$ and $b$ polynomials in the $\I_n$-decomposition of $d_{n,r}(z)$ was also recently observed, independently, by the second author and P.~Br\"and\'en \cite{BS18}.  
There, the authors actually prove the stronger statement that the $a$ and $b$ polynomials alway interlace.  
On the other hand, the proof given in Theorem~\ref{thm: real-rooted symmetric decomposition} has the advantage that the same proof of \cite[Question 4.11]{A16} works for all local $h$-polynomials discussed above.
\end{remark}

We end this subsection with one more note: 
Here we introduced the additional distributional property of $\gamma$-nonnegativity.  
It is worthwhile to note that since any symmetric and real-rooted polynomial is $\gamma$-nonnegative, then all $s$-derangement polynomials are $\gamma$-nonnegative by Theorem~\ref{thm: s-derangement real-rootedness}.
\begin{corollary}
\label{cor: gamma-nonnegativity}
All $s$-derangement polynomials are $\gamma$-nonnegative.
\end{corollary}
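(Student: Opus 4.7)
The plan is to combine the two properties of $s$-derangement polynomials that have already been established earlier in the paper: symmetry and real-rootedness. Symmetry follows from the fact that $d_n^s(z) = \ell^\ast(P_n^s;z)$ is the local $h^\ast$-polynomial of a lattice simplex, and as noted in the discussion following Theorem~\ref{thm: reflexive}, every such polynomial satisfies $\ell^\ast(\Delta;z) = z^{\dim(\Delta)+1}\ell^\ast(\Delta;1/z)$. Real-rootedness (together with nonnegativity of the coefficients) is provided by Theorem~\ref{thm: s-derangement real-rootedness}.

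With these two facts in hand, the result is immediate from the classical observation, referenced in the excerpt as \cite[Remark 3.1]{B15}, that any polynomial which is simultaneously symmetric with respect to its degree, real-rooted, and has nonnegative coefficients must be $\gamma$-nonnegative. The standard justification is brief: if $p(z)$ has degree $d$, is symmetric, and real-rooted with nonnegative coefficients, then its nonreal real roots pair up as $\{\alpha, 1/\alpha\}$ with $\alpha < 0$, and a possible root at $-1$ has even multiplicity. Pairing the reciprocal factors gives $p(z) = \prod_j(z - \alpha_j)(z - 1/\alpha_j) \cdot (z+1)^{\,d-2k}$, and expanding each factor $(z-\alpha_j)(z-1/\alpha_j) = z^2 + c_j z + 1$ with $c_j = -\alpha_j - 1/\alpha_j \geq 2$ yields a nonnegative linear combination in the basis $\{z^i(z+1)^{d-2i}\}$.

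Thus the entire proof reduces to citing Theorem~\ref{thm: s-derangement real-rootedness} together with the symmetry of local $h^\ast$-polynomials of simplices and \cite[Remark 3.1]{B15}. There is no real obstacle here, as the corollary is a formal consequence of results already proved; the only thing to verify carefully is that the symmetry really is with respect to the degree of $d_n^s(z)$, which may be strictly less than $n+1$ when some leading coefficients vanish, but this causes no issue since the $\gamma$-expansion can be taken with respect to any degree $d$ for which the polynomial is symmetric.
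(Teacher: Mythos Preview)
Your approach is correct and is exactly the paper's own argument: the paper simply remarks that since any symmetric, real-rooted polynomial (with nonnegative coefficients) is $\gamma$-nonnegative, Corollary~\ref{cor: gamma-nonnegativity} follows immediately from Theorem~\ref{thm: s-derangement real-rootedness} together with the symmetry of local $h^\ast$-polynomials. One small slip in your parenthetical sketch: the phrase ``nonreal real roots'' is garbled, and a root at $-1$ need not have even multiplicity (e.g.\ $p(z)=z+1$); the correct statement is that the multiplicity of $-1$ has the same parity as $d$, which is precisely what your factor $(z+1)^{d-2k}$ already encodes.
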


\subsection{Ehrhart $h^\ast$-polynomials of $s$-lecture hall order polytopes}
\label{subsec: order polytopes}
In a recent paper \cite{BL16}, Br\"and\'en and Leander introduced a family of lattice polytopes that simultaneously generalize two well-studied families of polytopes: the $s$-lecture hall simplices and the order polytopes.
Analogously, they refer to these polytopes as $s$-lecture hall order polytopes.  
In this subsection, we will use the results of Section~\ref{sec: s-derangement polynomials} to prove that all $s$-lecture hall order polytopes have a box unimodal triangulation.  
Via Theorem~\ref{thm: reflexive}, we then find that all reflexive $s$-lecture hall order polytopes have unimodal $h^\ast$-polynomials.  
As a corollary to this result, we are able to provide a partial answer to a conjecture posed in \cite{BL16}.  

In the following, we will let $P = ([n];\preceq_P)$ denote a labeled poset on ground set $[n]$, and we use $\leq$ to denote the typical total order placed on $\Z$.  
We say that $P$ is {\bf naturally labeled} if $i\preceq_P j$ whenever $i\leq j$.  
For a naturally labeled poset $P$ the {\bf order polytope} of $P$ is 
\[
O(P):=
\left\{
(x_1,\ldots,x_n)\in\R^n 
\, : \,
0\leq x_i\leq 1, i\in[n], \mbox{ and } x_i\leq x_j \mbox{ if } i\preceq_P j 
\right\}.
\]
Given a sequence of positive integers $s = (s_1,\ldots,s_n)$, the {\bf $s$-lecture hall order polytope} of the pair $(P,s)$ is 
\[
O(P,s):=
\left\{
(x_1,\ldots,x_n)\in\R^n 
\, : \,
0\leq x_i\leq s_i, i\in[n], \mbox{ and } \frac{x_i}{s_i}\leq \frac{x_j}{s_j} \mbox{ if } i\preceq_P j 
\right\}.
\]
A lattice triangulation $T$ of a lattice polytope is called {\bf unimodular} if the normalized volume of each simplex $\Delta\in T$ (i.e.~the value $h^\ast(\Delta;1)$) is equal to $1$.  
It is well-known that for any naturally labeled poset $P$, the order polytope $O(P)$ admits a regular and unimodular triangulation $T(P)$ known in the literature as the {\bf canonical triangulation} of $O(P)$ (see for instance \cite{RW05}).  
In \cite[Lemma 3.1]{BL16} it is shown that any $s$-lecture hall order polytope $O(P,s)$ admits a triangulation $T(P,s)$ whose facets are all $s$-lecture hall simplices.  
Moreover, it can be seen that $T(P,s)$ is simply a scaling of $T(P)$ by $s_i$ in each coordinate $i\in[n]$, and therefore it is also a regular triangulation.  
In the following, we will call the triangulation $T(P,s)$ the {\bf $s$-canonical triangulation} of $O(P,s)$. 

Our first goal in this subsection is to prove that the $s$-canonical triangulation of an $s$-lecture hall order polytope is alway box unimodal.  
To do this, we will need the following lemma.
\begin{lemma}
\label{lem: gcd}
Let $s = (s_1,\ldots,s_n)$ be a sequence of positive integers, set $g:=\gcd(s_1,\ldots,s_n)$, and let $x = (x_1,\ldots,x_n)$ be a sequence of nonnegative integers satisfying
\[
\frac{x_1}{s_1} = \frac{x_2}{s_2} = \cdots = \frac{x_n}{s_n}.
\]
Then $\frac{gx_i}{s_i}$ is an integer for all $i\in[n]$. 
\end{lemma}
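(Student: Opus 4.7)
The plan is to reduce the statement to a single observation about the common ratio $r := x_1/s_1 = \cdots = x_n/s_n \in \mathbb{Q}_{\geq 0}$. Since all $x_i$ are nonnegative integers with $x_i = r s_i$, the desired conclusion $\frac{g x_i}{s_i} = g r \in \mathbb{Z}$ is independent of the index $i$. Hence the whole lemma collapses to proving the single claim that $g r \in \mathbb{Z}$.

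To establish this, I would invoke B\'ezout's identity for integers: because $g = \gcd(s_1,\ldots,s_n)$, there exist integers $a_1,\ldots,a_n \in \mathbb{Z}$ with
\[
g = a_1 s_1 + a_2 s_2 + \cdots + a_n s_n.
\]
Multiplying both sides by $r$ and using $r s_i = x_i$ gives
\[
g r = a_1 x_1 + a_2 x_2 + \cdots + a_n x_n,
\]
which is a $\mathbb{Z}$-linear combination of integers and therefore an integer. Substituting back $g r = g x_i / s_i$ completes the proof for every $i \in [n]$.

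There is essentially no obstacle here; the only subtlety worth remarking on is the implicit assumption that $r$ is well-defined, which requires that if some $x_i = 0$ then all $x_j = 0$ (forced by the equalities $x_i/s_i = x_j/s_j$ together with $s_i, s_j > 0$), in which case the statement is trivial. Otherwise $r > 0$, and the B\'ezout argument above applies directly. The lemma is a clean arithmetic fact whose entire content is the B\'ezout identity, and I would present the proof in two short lines.
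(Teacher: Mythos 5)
Your proof is correct, and it takes a genuinely different route from the paper's. The paper proceeds by induction on $n$: for $n=2$ it writes $x_2 = \frac{(s_2/g)x_1}{s_1/g}$, uses the coprimality of $s_1/g$ and $s_2/g$ to conclude $\frac{s_1}{g}\mid x_1$, and then in the inductive step combines $g' = \gcd(s_1,\ldots,s_{n-1})$ with $s_n$ via the base case. You instead observe that the conclusion is the single statement $gr\in\Z$ for the common ratio $r$, and get it in one line from B\'ezout: writing $g = \sum_i a_i s_i$ and multiplying by $r$ gives $gr = \sum_i a_i x_i \in \Z$. Your argument is shorter, handles all $n$ simultaneously with no induction, and makes transparent that the only content is the $\Z$-linear representability of the gcd; the paper's version trades this for elementary divisibility reasoning that avoids invoking B\'ezout. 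Your remark on the degenerate case $r=0$ (all $x_i=0$) is the right thing to note and is handled correctly. No gaps.
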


\begin{proof}
By symmetry it suffices to prove that $\frac{gx_1}{s_1}$ is an integer. 
We do so via an inductive argument. 
Assume first that $n = 2$. 
Now 
\[
x_2 = \frac{s_2x_1}{s_1} = \frac{\frac{s_2}{g}x_1}{\frac{s_1}{g}}
\]
is an integer.
Since $\frac{s_2}{g}$ and $\frac{s_1}{g}$ are co-prime then $x_1$ must be divisible by $\frac{s_1}{g}$, which implies that 
\[
\frac{x_1}{\frac{s_1}{g}} = \frac{gx_1}{s_1}
\]
is an integer. 
Now take $n > 2$, let $g' = \text{gcd}(s_1,...,s_{n-1})$, and assume that the statement is true for $s_1,...,s_{n-1}$. 
This implies that $\frac{g'x_1}{s_1}$ is an integer. Thus, 
\[
\frac{\frac{g'x_1}{s_1}}{g'} = \frac{x_n}{s_n},
\]
and we can use the result for $n = 2$ on this new sequence to conclude that 
$$\frac{\text{gcd}(g',s_n)\frac{g'x_1}{s_1}}{g'} = \frac{gx_1}{s_1}$$
is an integer.
\end{proof}

Since the $s$-canonical triangulation of $O(P,s)$ is always regular, then to prove it is a box unimodal triangulation we need only show that each simplex in $T(P,s)$ has a unimodal local $h^\ast$-polynomial.  
To see this, we prove the following theorem.
\begin{theorem}
\label{thm: faces}
Let $s = (s_1,\ldots, s_n)$ be a sequence of positive integers and let $F$ be a face of the $s$-lecture hall simplex $P_n^s$.
Then $\ell^\ast(F;z) = d_n^{\mu}(z)$ for some sequence of positive integers $\mu$.
\end{theorem}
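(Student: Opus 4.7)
The plan is to exhibit, for each face $F$ of $P_n^s$, an explicit unimodular lattice equivalence between $F$ and a lower-dimensional $s$-lecture hall simplex $P_m^{\mu}$, so that the lattice-invariant local $h^\ast$-polynomial satisfies $\ell^\ast(F;z)=\ell^\ast(P_m^{\mu};z)=d_m^{\mu}(z)$. Since $P_n^s$ is an $n$-simplex with vertices $v^{(1)},\ldots,v^{(n+1)}$ determined by $v^{(k)}_i=s_i$ for $i\geq k$ and $v^{(k)}_i=0$ for $i<k$, any face is the convex hull of a subset $\{v^{(k_0)},\ldots,v^{(k_m)}\}$ with $1\leq k_0<\cdots<k_m\leq n+1$. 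Writing a point $x\in F$ as a convex combination $x=\sum_j\lambda_jv^{(k_j)}$ and expanding shows that the ratios $x_i/s_i$ are constant across each ``block'' $B_j:=\{k_j,\ldots,k_{j+1}-1\}$ with common value $r_j:=\sum_{\ell\leq j}\lambda_\ell$, subject to $0\leq r_0\leq r_1\leq\cdots\leq r_{m-1}\leq 1$, while $x_i=0$ for $i<k_0$ and $x_i=s_i$ for $i\geq k_m$.

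With this description in hand, I would set $g_j:=\gcd\{s_i:i\in B_j\}$ for each $j\in[m-1]_0$, define $\mu:=(g_0,g_1,\ldots,g_{m-1})$, and introduce the affine map $\phi:\R^m\to\R^n$ given by $\phi(\tilde r)_i:=s_i\tilde r_j/g_j$ for $i\in B_j$, together with the constants $\phi(\tilde r)_i=0$ for $i<k_0$ and $\phi(\tilde r)_i=s_i$ for $i\geq k_m$. Because $g_j\mid s_i$ for every $i\in B_j$, the map $\phi$ has integer matrix entries, and by construction sends the lecture hall simplex $P_m^{\mu}=\{\tilde r:0\leq \tilde r_0/g_0\leq\cdots\leq\tilde r_{m-1}/g_{m-1}\leq 1\}$ bijectively onto $F$ via the identification $r_j=\tilde r_j/g_j$.

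The main step, and where I expect the real work to sit, is to upgrade this affine bijection of polytopes to a \emph{unimodular} equivalence, meaning that $\phi$ restricts to a bijection $\Z^m\to\Z^n\cap\mathrm{aff}(F)$ between the ambient affine lattices. The forward inclusion $\phi(\Z^m)\subseteq\Z^n\cap\mathrm{aff}(F)$ is immediate from $g_j\mid s_i$. For the reverse, given any $x\in\Z^n\cap\mathrm{aff}(F)$, the block ratios $r_j=x_i/s_i$ are well-defined from the linear relations cutting out $\mathrm{aff}(F)$, and one must show that the preimages $\tilde r_j=g_jr_j$ are integers. This is exactly the content of Lemma~\ref{lem: gcd}, applied on each block to the integer sequences $(x_i)_{i\in B_j}$ and $(s_i)_{i\in B_j}$; the only subtlety to check is that the proof of that lemma uses nothing but a coprimality/divisibility argument and therefore extends verbatim to arbitrary integer (not merely nonnegative) coordinates, which is needed because points of $\mathrm{aff}(F)$ are not forced to lie in $F$. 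Once this lattice bijection is in place, unimodular invariance of $\ell^\ast$ yields $\ell^\ast(F;z)=\ell^\ast(P_m^{\mu};z)=d_m^{\mu}(z)$, with $\mu=(g_0,\ldots,g_{m-1})$ as defined above.
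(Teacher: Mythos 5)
Your proposal is correct and follows essentially the same route as the paper: the same vertex parametrization of the face $F$, the same choice of $\mu$ as the block-wise gcds, and the same appeal to Lemma~\ref{lem: gcd} (whose proof indeed extends verbatim to arbitrary integer coordinates), with the paper's bijection $f$ between the open parallelepipeds being exactly the cone-level incarnation of your map $\phi^{-1}$. The only difference is one of packaging: you phrase the argument as a unimodular equivalence of $F$ with $P_m^{\mu}$ followed by invariance of $\ell^\ast$, whereas the paper verifies the height-preserving lattice-point bijection on $\Pi_F^{\circ}$ directly.
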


\begin{proof}
If $F$ is a vertex, then $\ell^\ast(F;z) = 0$, so the theorem holds. 
Now assume that $F$ is not a vertex. 
Note that $P^s_n$ can be written as

\[
P^s_n = \text{conv} \left (  \begin{pmatrix}s_1 \\ s_2 \\ \vdots \\ s_n\end{pmatrix} ,\begin{pmatrix}0 \\ s_2 \\ \vdots \\ s_n\end{pmatrix}, \cdots, \begin{pmatrix}0 \\ \vdots \\ 0 \\ s_n\end{pmatrix} , \begin{pmatrix}0 \\ 0\\ \vdots \\ 0\end{pmatrix}    \right),
\]
and set $v_i :=(0,0,\ldots,0,s_{i+1},\ldots,s_n)^T$ for $i\in[n]_0$.
Let
\[
F = \text{conv}(v_{i_0},v_{i_1},\cdots,v_{i_m}),
\]
where $0 \leq i_0 < i_1 < \cdots < i_m \leq n$ be a face of $P_n^s$ with $m>0$.
Let $\mu = (\mu_1,\mu_2,\cdots,\mu_m)$ be the sequence of positive integers defined by
\[
\mu_j := \text{gcd}(s_{i_{j-1}+1}, s_{i_{j-1}+2},\cdots,s_{i_j}),
\]
for $j \in[m]$. 
To prove the theorem is suffices to find a bijection 
\[
f:\Pi^\circ_F \cap \mathbb{Z}^{n+1} \rightarrow \Pi^\circ_{P^\mu_m}\cap\mathbb{Z}^{m+1}
\]
such that $x_{n+1} = f(x)_{m+1}$. 
To do so, take a point $x \in \Pi^{\circ}_{F} \cap \mathbb{Z}^{n+1}$, and note that it can be written as 
\[
x = \begin{pmatrix}x_1 \\ x_2 \\ ... \\ x_{n+1}\end{pmatrix} = \sum_{j = 0}^m\lambda_{j}w_{i_j} = \begin{pmatrix}s_1q_1 \\ s_2q_2 \\ ...\\ s_nq_n \\ \sum_{j = 0}^m\lambda_j\end{pmatrix},
\]
where $w_k := (v_k,1)$, $\lambda_k \in (0,1)$, and 
\[
q_k := \sum_{i_j < k}\lambda_j.
\]
Let $f(x)$ be the point in $\Pi_{P^\mu_m}$ defined by
$$f(x) :=  \sum_{j = 0}^m\lambda_jw^*_j,$$
where $w^*_j = (v_j^*,1)$ and $v_j^*$ is the $j^{th}$ vertex of $P^\mu_m$. This is an injective function, and since $f(x)$ can be written as
$$f(x) = \begin{pmatrix}\lambda_0\mu_1 \\ (\lambda_0 + \lambda_1)\mu_2 \\ ... \\ (\lambda_0 + ... + \lambda_{m-1})\mu_m \\ \sum_{j = 0}^m\lambda_j \end{pmatrix},$$
its last coordinate is the same as the last coordinate of $x$. What remains to show is that $f$ is well-defined (maps to $\Pi_{P^\mu_m}^\circ \cap \mathbb{Z}^{m+1})$ and surjective. For $j \in[m]$, note that
\[
q_{i_{j-1}+1} = q_{i_{j-1}+2} = ... = q_{i_j},
\]
and set $q_j:= q_{i_j}$.  
Since $x \in \mathbb{Z}^{n+1}$, 
\[
q_js_{i_{j-1}+k} = x_{i_{j-1}+k} \in \mathbb{Z},
\]
for $k \in [i_j-i_{j-1}]$. 
Thus, 
\[
q_j = \frac{x_{i_{j-1}+1}}{s_{i_{j-1}+1}} = \frac{x_{i_{j-1}+2}}{s_{i_{j-1}+2}} = ... = \frac{x_{i_j}}{s_{i_j}}.
\]
Using Lemma \ref{lem: gcd} on this we see that $\mu_jq_j$ is an integer. 
However, note that $f(x)_j = \mu_jq_j$ for $j \in [m]$. 
Thus, $f(x)$ is in $\mathbb{Z}^{m+1}$. 
Also, since $x \in \Pi_{F}^{\circ}$, the numbers $\lambda_0, ..., \lambda_m$ are contained in $(0,1)$. 
Therefore, $f(x)$ is in the interior of $\Pi_{P^\mu_m}^\circ$, which means that it maps to $\Pi_{P^\mu_m}^{\circ} \cap \mathbb{Z}^{m+1}$. Furthermore, for a particular integer $f(x)_j = \mu_jq_j$, we can let $x_{i_{j-1}+k}$ equal
\[
x_{i_{j-1}+k} = \mu_jq_j\frac{s_{i_{j-1}+k}}{\mu_j},
\]
for $k \in [i_j-i_{j-1}]$, in order to map to $f(x)_j$. 
The numbers $x_{i_{j-1}+k}$ will then be integers thanks to the fact that $\mu_j$ is a divisor of $s_{i_{j-1}+k}$. 
This proves that $f$ is surjective, thereby completing the proof.
\end{proof}

The real-rootedness of the local $h^\ast$-polynomial of each simplex in the $s$-canonical triangulation of $O(P,s)$ follows immediately by combining Theorem~\ref{thm: faces} and Theorem~\ref{thm: s-derangement real-rootedness}.  
\begin{corollary}
\label{cor: faces real-rootedness}
Let $s = (s_1,\ldots, s_n)$ be a sequence of positive integers and let $F$ be a face of the $s$-lecture hall simplex $P_n^s$.
Then the local $h^\ast$-polynomial $\ell^\ast(F;z)$ of $F$ is real-rooted, and thus unimodal.
\end{corollary}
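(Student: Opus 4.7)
The plan is essentially to chain the two previous results together, with a brief comment on the degenerate case. First, I would invoke Theorem~\ref{thm: faces}, which asserts that for any face $F$ of $P_n^s$, there exists a sequence of positive integers $\mu = (\mu_1,\ldots,\mu_m)$ (with $m = \dim F$) such that $\ell^\ast(F;z) = d_m^\mu(z)$. The construction of $\mu$ in that theorem was explicit via gcd's over consecutive blocks of the defining sequence $s$ determined by the vertex set of $F$, so there is nothing further to verify here.

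Next, I would apply Theorem~\ref{thm: s-derangement real-rootedness} to the $s$-derangement polynomial $d_m^\mu(z)$, which immediately yields that $\ell^\ast(F;z)$ is real-rooted. The only slightly degenerate case is when $F$ is a vertex, in which case $\ell^\ast(F;z) = 0$, which is real-rooted by our convention (the zero polynomial was explicitly included in the definition of real-rootedness in Section~\ref{sec: introduction}); this case is also handled explicitly in Theorem~\ref{thm: faces}.

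Finally, to conclude unimodality, I would appeal to the general principle recalled in the introduction, citing \cite[Theorem 1.2.1]{B89}: a real-rooted polynomial with nonnegative coefficients is log-concave and unimodal. Since $\ell^\ast(F;z)$ has nonnegative integer coefficients (it is a lattice point enumerator on the open parallelepiped of a lattice simplex, as in the definition preceding Theorem~\ref{thm: betke and mcmullen}), unimodality follows at once.

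There is really no obstacle in this corollary; all the work was done in establishing Theorem~\ref{thm: faces} and Theorem~\ref{thm: s-derangement real-rootedness}. The only thing that requires a sentence of care is pointing out that the nonnegativity hypothesis needed to deduce unimodality from real-rootedness is automatic from the combinatorial interpretation of $\ell^\ast(F;z)$, and that the vertex case is trivial.
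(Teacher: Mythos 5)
Your proposal is correct and follows exactly the paper's argument, which simply combines Theorem~\ref{thm: faces} with Theorem~\ref{thm: s-derangement real-rootedness}. The extra remarks on the vertex case and on the nonnegativity of the coefficients needed to pass from real-rootedness to unimodality are accurate and consistent with the conventions already set up in the paper.
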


\begin{proof}
The result follows immediately by combining Theorem~\ref{thm: s-derangement real-rootedness} with Theorem~\ref{thm: faces}.
\end{proof}

As an immediate consequence of Corollary~\ref{cor: faces real-rootedness} and our results in Section~\ref{sec: s-derangement polynomials}, we recover our first two desired conclusions of this subsection.  
\begin{theorem}
\label{thm: box unimodal}
Let $O(P,s)$ be an $s$-lecture hall order polytope.  
Then the $s$-canonical triangulation of $O(P,s)$ is box unimodal.  
Moreover, if $O(P,s)$ is reflexive then its $h^\ast$-polynomial $h^\ast(O(P,s);z)$ is unimodal.
\end{theorem}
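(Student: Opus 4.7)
The plan is to verify the two clauses of the definition of a box unimodal triangulation (regularity, plus unimodality of every simplex's box polynomial) directly for $T(P,s)$, and then to deduce $h^\ast$-unimodality in the reflexive case from Theorem~\ref{thm: reflexive} by the standard symmetric-unimodal convolution argument.

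For the first clause, regularity is essentially free from the paragraph preceding Theorem~\ref{thm: faces}: $T(P,s)$ is obtained from the canonical triangulation $T(P)$ of the (ordinary) order polytope $O(P)$ by the coordinatewise scaling $(x_1,\ldots,x_n)\mapsto(s_1x_1,\ldots,s_nx_n)$, and this diagonal scaling is regularity-preserving (the heights of a witnessing convex lifting for $T(P)$ lift to witnessing heights for $T(P,s)$). For unimodality of the box polynomials, I would invoke \cite[Lemma 3.1]{BL16}, which identifies each facet of $T(P,s)$ with an $s'$-lecture hall simplex for an appropriate subsequence $s'$ of $s$. Every simplex $\Delta\in T(P,s)$ is therefore a face of some $s'$-lecture hall simplex, and Corollary~\ref{cor: faces real-rootedness} applies to give $\ell^\ast(\Delta;z) = d_m^\mu(z)$ for some $\mu$, real-rooted and hence unimodal. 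This verifies Definition~\ref{def: box unimodal}.

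For the reflexive case, I would restrict $T(P,s)$ to $\partial O(P,s)$ to obtain a triangulation $T'$ of the boundary; since $T(P,s)$ is regular, so is $T'$. Since $O(P,s)$ is reflexive, Theorem~\ref{thm: reflexive} yields
\[
h^\ast(O(P,s);z) = \sum_{\Delta\in T'}h(\link_{T'}(\Delta);z)\,\ell^\ast(\Delta;z).
\]
By the discussion immediately after Theorem~\ref{thm: reflexive}, each factor $h(\link_{T'}(\Delta);z)$ is symmetric and unimodal of degree $d-\dim(\Delta)-1$, where $d:=\dim O(P,s)$; each factor $\ell^\ast(\Delta;z)$ is symmetric of degree $\dim(\Delta)+1$, and unimodal by the first half of the argument. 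Each summand is thus a product of two nonnegative symmetric unimodal polynomials, hence itself nonnegative, symmetric, and unimodal of degree $d$ with center of symmetry $d/2$. As the summands all share the same center of symmetry, their sum $h^\ast(O(P,s);z)$ inherits both symmetry and unimodality, completing the proof.

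The heavy lifting has already been done in Theorem~\ref{thm: faces} and Corollary~\ref{cor: faces real-rootedness}; the expected obstacle is purely bookkeeping—checking that the scaling $(x_i)\mapsto(s_ix_i)$ and the restriction to $\partial O(P,s)$ preserve regularity, and that every simplex of $T(P,s)$ genuinely appears as a face of an $s$-lecture hall simplex. Both points follow from the structural description of $T(P,s)$ in \cite[Lemma 3.1]{BL16}, so the proof should reduce to a short assembly of results already established earlier in the paper.
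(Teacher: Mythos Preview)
Your proposal is correct and follows essentially the same route as the paper: regularity of $T(P,s)$ from the scaling of the canonical triangulation, unimodality of every box polynomial via Corollary~\ref{cor: faces real-rootedness}, and then Theorem~\ref{thm: reflexive} for the reflexive case. The paper's proof is terser---it simply cites Corollary~\ref{cor: faces real-rootedness} and Theorem~\ref{thm: reflexive} without spelling out the symmetric-unimodal convolution argument or the restriction to $\partial O(P,s)$---but the underlying logic is identical to yours.
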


\begin{proof}
Suppose first that $O(P,s)$ is any $s$-lecture hall order polytope.  
By Corollary~\ref{cor: faces real-rootedness}, we know that for each face $F\in T(P,s)$, the $s$-canonical triangulation of $O(P,s)$, has a real-rooted, and therefore unimodal, local $h^\ast$-polynomial.  
Since the $s$-canonical triangulation of $O(P,s)$ is always regular, it follows that it is box unimodal.  
In the special case that $O(P,s)$ is reflexive, it then follows from Theorem~\ref{thm: reflexive}, and the regularity of $T(P,s)$, that $O(P,s)$ has a unimodal $h^\ast$-polynomial.
\end{proof}

Recently, there has been a growing interest in the identification of lattice polytopes with box unimodal triangulations \cite{B16,Ober17,SV13}, specifically because of its applications to unimodality questions for $h^\ast$-polynomials of reflexive polytopes.
Theorem~\ref{thm:  box unimodal} demonstrates that a large family of lattice polytopes containing other well-studied polytopes admit this desirable property.  

As an application of Theorem~\ref{thm: box unimodal}, we can additionally provide a partial answer to a conjecture posed in \cite{BL16}.  
To do so, we first recall some fundamental definitions for posets.  
A {\bf chain} is a poset in which any two elements are regarded as comparable.  
If $x_1\prec x_2\prec \cdots \prec x_n$ is a chain then the {\bf rank} of $x_i$ in the chain is $i-1$ for all $i\in[n]$.  
A subposet $C$ of a poset $P$ is called a {\bf chain} in $P$ if $C$ is a chain when it is regarded as a subposet of $P$.  
The chain $C$ is called {\bf maximal} if it is not contained in a larger chain within $P$.  
The {\bf length} of a chain $C$ is the number of elements in $C$ minus one.
A poset $P$ is called {\bf graded} if every maximal chain in $P$ has the same length.
A poset $P$ is called {\bf ranked} if for every maximal element $x$ in $P$ the subposet $\{y\in P: y\preceq_P x\}$ is graded. 
If $P$ is a {\bf ranked} poset, then the {\bf rank} of any $x\in P$ is its rank in any maximal chain of $P$.
In this case we let $\rho_P: [n]\longrightarrow\Z_{\geq0}$ denote the function mapping each $x\in P$ to its rank.  
In \cite{BL16} the authors proved the following.
\begin{theorem}
\label{thm: branden and leander}
\cite[Theorem 4.2]{BL16}
Let $P = ([n],\preceq_p)$ be a naturally-labeled ranked poset and let $s = (s_1,\ldots, s_n)$ be the sequence of positive integers where 
\[
s_i:=\rho_P(i)+1.
\]
Then $h^\ast(O(P,s);z)$ is symmetric with respect to its degree, which is $n-1$.  
\end{theorem}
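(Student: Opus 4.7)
The plan is to exploit the standard Gorenstein characterization of symmetric $h^\ast$-polynomials: for an $n$-dimensional lattice polytope $Q$, the polynomial $h^\ast(Q;z)$ is symmetric of degree $n-1$ if and only if $Q$ is Gorenstein of index $2$, i.e., the dilate $2Q$ contains a unique interior lattice point $q$ such that $2Q - q$ is reflexive. So the whole task reduces to exhibiting this distinguished interior point for $Q = O(P,s)$ and verifying that the lattice distance from $q$ to every facet of $2\,O(P,s)$ equals $1$.

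First, I would propose the candidate
\[
q := \bigl(2\rho_P(1)+1,\, 2\rho_P(2)+1,\, \ldots,\, 2\rho_P(n)+1\bigr),
\]
for which $q_i/s_i = 2 - 1/s_i$. Then $0 < q_i < 2s_i$, and for any cover $i \prec_P j$ we have $q_j/s_j - q_i/s_i = 1/s_i - 1/s_j > 0$ since $s_i < s_j$, so $q$ lies in the interior of $2\,O(P,s)$. Next, recall that $O(P,s)$ is the image of the classical order polytope $O(P)$ under the coordinate-wise scaling $y_i \mapsto s_i y_i$; consequently its facets come from three families: $x_i \geq 0$ for minimal $i$, $x_i \leq 2s_i$ for maximal $i$ (after dilation), and $s_i x_j - s_j x_i \geq 0$ for each cover relation $i \prec_P j$. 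A direct computation gives the lattice distance of $q$ to each facet: if $\rho_P(i) = 0$ then $q_i = 1$; if $i$ is maximal then $2s_i - q_i = 1$; and for each cover $i \prec_P j$,
\[
s_i q_j - s_j q_i = (\rho_P(i)+1)(2\rho_P(j)+1) - (\rho_P(j)+1)(2\rho_P(i)+1) = \rho_P(j) - \rho_P(i) = 1.
\]

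This single calculation accomplishes everything at once: shifting every facet of $2\,O(P,s)$ inward by one lattice unit along its primitive normal produces the polytope whose only lattice point is $q$ itself, so $q$ is the \emph{unique} interior lattice point of $2\,O(P,s)$, and the translate $2\,O(P,s) - q$ has every facet at lattice distance $1$ from the origin, hence is reflexive. Applying the Gorenstein-symmetry equivalence then yields that $h^\ast(O(P,s);z)$ is symmetric of degree $n - 1$. The main obstacle will be justifying that the three families listed really exhaust the facets of $O(P,s)$; this is classical for order polytopes and transports through the non-unimodular scaling $y_i \mapsto s_i y_i$ because that map sends facet-defining inequalities of $O(P)$ bijectively to the inequalities above. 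Once the facet description is in place, the rest is the short arithmetic above together with the invocation of the well-known characterization of Gorenstein index via $h^\ast$-symmetry.
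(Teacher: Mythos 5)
Your argument is correct, but note that the paper does not prove this statement at all: it is quoted verbatim from \cite[Theorem 4.2]{BL16}, where Br\"and\'en and Leander obtain the symmetry by combinatorial/algebraic means within the theory of lecture hall $P$-partitions. Your route is the direct polyhedral one: exhibit $q=(2\rho_P(i)+1)_i$ as the unique interior lattice point of $2\,O(P,s)$ and check that $2\,O(P,s)-q$ is reflexive, so that $O(P,s)$ is Gorenstein of index $2$ and the Gorenstein--symmetry equivalence gives symmetry of $h^\ast$ with respect to degree $n+1-2=n-1$. The computations all check out, and the argument even yields slightly more (the Gorenstein property itself). Three points should be made explicit to make it airtight. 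First, the lattice-distance reading of $s_iq_j-s_jq_i=1$ for a cover $i\prec_P j$ requires the normal $s_ie_j-s_je_i$ to be primitive; this holds precisely because $P$ is ranked, so $s_j=s_i+1$ and $\gcd(s_i,s_j)=1$ (for general $s$ this normal is not primitive, which is where the hypothesis on $s$ enters). Second, the equivalence ``$h^\ast(Q;z)$ symmetric of degree $d$ $\iff$ $Q$ Gorenstein of index $\dim Q+1-d$'' is standard but is not the statement cited in the paper (which is only Hibi's reflexive case \cite{H92}), so it needs its own reference. Third, you should add one line for why the index is exactly $2$, i.e.\ why $O(P,s)$ itself has no interior lattice point: if $y\in O(P,s)^\circ\cap\Z^n$ and $v$ is any vertex, then $y+v$ is an interior lattice point of $2\,O(P,s)$, forcing $y+v=q$ for every vertex $v$, which is absurd for $n\geq 1$; alternatively, $q$ has all coordinates odd, so it is not twice a lattice point. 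With those additions (and the classical facet description of order polytopes, which you already flag), this is a complete and self-contained proof, independent of the one in \cite{BL16}.
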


In \cite[Section 5]{BL16} they further ask if the $h^\ast$-polynomials in Theorem~\ref{thm: branden and leander} are also unimodal.  
Specifically, this is the motivating question behind the statement of \cite[Conjecture 5.4]{BL16}. 
Using Theorem~\ref{thm: box unimodal} we can provide the following partial answer to this question, specifically in the case when the poset $P$ has a unique minimal element.
\begin{corollary}
\label{cor: posets with a unique minimal element}
Let $P= ([n],\preceq_P)$ be a naturally-labeled ranked poset with rank function $\rho_P$ and let $s = \rho_P+1$.
If $P$ has a unique minimal element then $h^\ast(O(P,s);z)$ is unimodal.
\end{corollary}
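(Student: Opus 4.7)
The plan is to show $O(P,s)$ is a lattice pyramid over a lower-dimensional $s$-lecture hall order polytope $O(P',s')$, deduce $h^\ast(O(P,s);z) = h^\ast(O(P',s');z)$, and then invoke Hibi's criterion together with Theorem~\ref{thm: box unimodal} to conclude unimodality. Since $P$ is naturally labeled and has a unique minimum, that minimum must be the element $1$, of rank $0$, so $s_1 = 1$. Setting $P' := P \setminus \{1\}$ and $s' := (s_2,\ldots,s_n)$, I would consider the facet $F := \{0\} \times O(P',s')$ of $O(P,s)$ (carved out by the tight inequality $x_1 \geq 0$) together with the lattice vertex $v := (1,s_2,\ldots,s_n)$ at which every box inequality $x_i \leq s_i$ and every chain inequality $x_i/s_i \leq x_j/s_j$ is simultaneously tight. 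To establish $O(P,s) = \conv(\{v\} \cup F)$, for any $x \in O(P,s)$ with $x_1 < 1$ one checks that $y_i := (x_i - x_1 s_i)/(1 - x_1)$ for $i \geq 2$ defines a point $y \in O(P',s')$ satisfying $x = (1-x_1)(0,y) + x_1 v$, with all relevant inequalities transferring cleanly.

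Once the pyramid structure is in hand, a standard slicing argument yields the Ehrhart identity $\Ehr_{O(P,s)}(z) = \Ehr_{O(P',s')}(z)/(1-z)$: slicing the $t$-th dilate of $O(P,s)$ at the integer hyperplane $x_1 = k$ produces, after the integer translation by $(k s_2,\ldots,k s_n)$ in the remaining coordinates, exactly the dilate $(t-k) \cdot O(P',s')$, and summing lattice-point counts over $k \in \{0,1,\ldots,t\}$ and passing to generating functions gives the identity. Comparing the denominators $(1-z)^{n+1}$ and $(1-z)^{n}$ of the two Ehrhart series then yields $h^\ast(O(P,s);z) = h^\ast(O(P',s');z)$.

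By Theorem~\ref{thm: branden and leander}, $h^\ast(O(P,s);z)$ is symmetric with respect to degree $n-1 = \dim O(P',s')$, so the same holds for $h^\ast(O(P',s');z)$. Hibi's criterion \cite{H92} therefore identifies $O(P',s')$ as reflexive up to unimodular transformation, and since $h^\ast$-polynomials and the box unimodality of the $s'$-canonical triangulation are both invariant under unimodular transformations, the argument of Theorem~\ref{thm: box unimodal} applies to give unimodality of $h^\ast(O(P',s');z)$, and hence of $h^\ast(O(P,s);z)$. The main technical step will be the pyramid identification, in particular verifying that the substitution $y_i = (x_i - x_1 s_i)/(1-x_1)$ respects every chain and box constraint; the remainder of the proof is a clean invocation of established results.
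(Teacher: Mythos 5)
Your proof is correct and its overall architecture is the same as the paper's: delete the unique minimum (which natural labeling forces to be the element $1$, of rank $0$, so $s_1=1$), show that $h^\ast(O(P,s);z)=h^\ast(O(P',s');z)$ for the subposet $P'$ on the remaining elements, observe via Theorem~\ref{thm: branden and leander} and Hibi's criterion \cite{H92} that $O(P',s')$ is reflexive because its $h^\ast$-polynomial is symmetric of degree $n-1=\dim O(P',s')$, and conclude with Theorem~\ref{thm: box unimodal}. The one place you diverge is the middle step: the paper simply cites \cite[Corollary 3.7]{BL16} for the equality of $h^\ast$-polynomials, whereas you prove it from scratch by exhibiting $O(P,s)$ as a lattice pyramid with apex $v=(1,s_2,\ldots,s_n)$ over the facet $\{x_1=0\}\cong O(P',s')$ and running the standard slicing computation $L_{O(P,s)}(t)=\sum_{k=0}^{t}L_{O(P',s')}(t-k)$. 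Your verification is sound: the inequalities $x_1=x_1/s_1\le x_j/s_j$ coming from $1\preceq_P j$ are exactly what make $y_i=(x_i-x_1s_i)/(1-x_1)$ nonnegative, and the apex sits at lattice height $1$ over the base hyperplane, so the Cauchy-product identity for the Ehrhart series gives $\Ehr_{O(P,s)}(z)=\Ehr_{O(P',s')}(z)/(1-z)$ and hence equal $h^\ast$-polynomials after matching denominators. What your route buys is self-containedness and a transparent geometric reason for the reduction; what the paper's citation buys is brevity. The only cosmetic gap is that you treat $x=v$ (the case $x_1=1$) implicitly, but that point is trivially in the claimed convex hull.
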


\begin{proof}
Let $P= (\{0\}\cup[n],\preceq_P)$ be a naturally-labeled ranked poset with a unique minimal element $0$ and rank function $\rho_P:[n]_0\longrightarrow\Z_{\geq0}$.  
Since $O(P,s)$ is always $n$-dimensional for a poset on $n$ elements, then by Theorem~\ref{thm: box unimodal} it suffices to prove that 
\[
h^\ast(O(P,s);z) = h^\ast(O(Q,s^\prime);z),
\]
where $Q$ denotes the subposet of $P$ on the elements $[n]$ and $s^\prime = (\rho_P+1)\big|_{[n]}$.  
However, this fact follows immediately from \cite[Corollary 3.7]{BL16}, and so the result holds.
\end{proof}

\begin{remark}
It is important to note that the conjectured unimodality of all the $h^\ast$-polynomials considered in Theorem~\ref{thm: branden and leander} cannot be recovered via Theorem~\ref{thm: box unimodal} alone.  
This is because some $h^\ast$-polynomials from Theorem~\ref{thm: branden and leander} cannot be the $h^\ast$-polynomial of a reflexive $O(P,s)$.  
For example, consider the poset $P = ([3],\preceq_P)$ in which $1\prec_P3$ and $2\prec_P3$.  
Then $P$ is naturally labeled with rank function $\rho$ and
\[
h^\ast(O(P,s);z) = 1+2z+z^2,
\]
where $s=\rho+1$.  
If we want to realize $1+2z+z^2$ as the $h^\ast$-polynomial of some reflexive $2$-dimensional $O(Q,\mu)$ then we know that $Q$ must be a naturally-labeled poset with ground set $[2]$ and that the normalized volume of $O(Q,\mu)$ must equal $4$.  
The only three possibilities for such an $O(Q,\mu)$ are
\begin{enumerate}
	\item $Q$ has no relations with $\mu_1 = 1$ and $\mu_2 = 2$ (or $\mu_1 = 2$ and $\mu_2 = 1$),
	\item $Q$ has the single relation $1\prec_Q2$ and $\mu_1 = 1$ and $\mu_2 = 4$ (or $\mu_1 = 4$ and $\mu_2 = 1$), or 
	\item $Q$ has the single relation $1\prec_Q2$ and $\mu_1 = \mu_2 = 2$.  
\end{enumerate}
However, each of these $O(Q,\mu)$ have $h^\ast$-polynomial $1+3z$. 
Thus, further work beyond Theorem~\ref{thm: box unimodal} is needed to fully answer the unimodality question posed in \cite{BL16} in relation to \cite[Conjecture 5.4]{BL16}. 
\end{remark}

\subsection{Local $h^\ast$-polynomials of $s$-lecture hall order polytopes}
\label{subsec: local order polytopes}
In \cite[Example 7.13]{S92}, Stanley generalized the definition of a local $h^\ast$-polynomial of a lattice simplex given by Betke and McMullen \cite{BM85} to arbitrary lattice polytopes.  
Given a $d$-dimensional lattice polytope $P$, we define the face poset of $P$, denoted $\mathcal{F}(P)$, to be the poset whose ground set is the collection of all faces of $P$ (including the empty set) with the partial order $\preceq_{\mathcal{F}(P)}$ being inclusion.  
In the following, we denote the set of all faces of $P$ by $F(P)$.  
Notice that $\mathcal{F}(P)$ is a ranked poset with rank function $\rho(F) = \dim(F)+1$ for all faces $F$ of $P$.  
The {\bf $g$-polynomial} of $P$ \cite{S11} is defined recursively in the following way: 
If $P$ is the empty polytope, i.e.,~$P = \emptyset$, then $g(P;z) = 1$.  
If $P$ is dimension $d$, then $g(P;z)$ is the unique polynomial of degree strictly less than $\frac{d}{2}$ satisfying
\[
z^dg\left(P\, ;\frac{1}{z}\right) = \sum_{F\in F(P)}g(F;z)(t-1)^{d-\dim(F)-1}.
\]
Given a poset $P = ([n];\preceq_P)$, we let $P^\vee := ([n];\preceq_{P^\vee})$ denote its {\bf dual} poset; i.e.,~the poset for which $i\preceq_{P^\vee}j$ whenever $j\preceq_P i$.  
The {\bf local $h^\ast$-polynomial} of $P$ is then 
\begin{equation}
\label{eqn: general local}
\ell^\ast(P;z) = \sum_{F\in F(P)}(-1)^{\dim(P)-\dim(F)}h^\ast(F;z)g([F,P]^\vee;z).
\end{equation}
Notice that if $P$ is a simplex then $g(P;z) = 1$ (see for instance \cite[Example 3.9]{KS16} or \cite[Remark 4.2]{BN08}).  
Thus, for a simplex $P$, the formula for $\ell^\ast(P;z)$ given in equation~\eqref{eqn: general local} reduces to that in equation~\eqref{eqn: local} by applying The Principle of Inclusion-Exclusion.  

In subsection~\ref{subsec: order polytopes} we saw an example of how the symmetry and unimodality of local $h^\ast$-polynomials in a regular lattice triangulation of a lattice polytope $P$ can be used to recover unimodality of the $h^\ast$-polynomial of $P$.  
Unfortunately, in practice, we are typically restricted to assuming other sufficient conditions, such as reflexivity of $P$, in order to prove that $h^\ast(P;z)$ is unimodal by way of a box unimodal triangulation.  
On the other hand, it follows from \cite[Remark 7.23]{KS16} that these additional assumptions are not required if we only wish to prove unimodality of the local $h^\ast$-polynomial $P$.  
In particular, we have the following result.
\begin{theorem}
\label{thm: unimodal local order polytopes}
The local $h^\ast$-polynomial of an $s$-lecture hall order polytope is unimodal.  
\end{theorem}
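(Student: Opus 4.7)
The plan is to apply a local analogue of the Betke--McMullen decomposition (Theorem~\ref{thm: betke and mcmullen}) together with the box-unimodal property already established in Theorem~\ref{thm: box unimodal}. By \cite[Theorem 7.7 and Remark 7.23]{KS16}, for any lattice polytope $Q$ equipped with a lattice triangulation $T$, there is a decomposition of the form
\[
\ell^\ast(Q;z) \;=\; \sum_{\Delta} h(\mathrm{link}^\circ_T(\Delta);z)\,\ell^\ast(\Delta;z),
\]
where the sum runs over those simplices $\Delta\in T$ whose relative interior lies in the relative interior of $Q$, and $\mathrm{link}^\circ_T(\Delta)$ denotes the corresponding interior link of $\Delta$ in $T$. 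This is the direct local-$h^\ast$ analogue of Theorem~\ref{thm: betke and mcmullen}, with the boundary-contribution simplices removed.

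I would apply this decomposition to $Q = O(P,s)$ equipped with the $s$-canonical triangulation $T = T(P,s)$. Each simplex $\Delta \in T(P,s)$ is a face of an $s$-lecture hall simplex $P_n^s$, so by Corollary~\ref{cor: faces real-rootedness} its box polynomial $\ell^\ast(\Delta;z)$ is real-rooted; since box polynomials are always palindromic with respect to degree $\dim(\Delta)+1$, this makes each $\ell^\ast(\Delta;z)$ symmetric and unimodal of that degree. Moreover, as noted in subsection~\ref{subsec: order polytopes}, $T(P,s)$ is a regular lattice triangulation, and the standard Hard-Lefschetz-type argument applied to interior links of a regular triangulation (cf.~\cite[Lemma 2.9]{S09} and its interior-link version in \cite[Theorem 6.2]{KS16}) yields that $h(\mathrm{link}^\circ_T(\Delta);z)$ is symmetric and unimodal of the complementary degree $\dim(O(P,s))-\dim(\Delta)-1$.

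Consequently each summand $h(\mathrm{link}^\circ_T(\Delta);z)\,\ell^\ast(\Delta;z)$ is a product of two symmetric unimodal polynomials whose centers of symmetry are matched so that the product itself is symmetric and unimodal of common degree $\dim(O(P,s))$; this is a classical observation (see, e.g., \cite[Proposition 1]{B89}). Since a sum of symmetric unimodal polynomials of the same degree and center of symmetry is again symmetric and unimodal, the decomposition above expresses $\ell^\ast(O(P,s);z)$ as such a sum, and unimodality follows.

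The main obstacle is simply invoking the correct local version of the Betke--McMullen formula and the correct interior-link unimodality statement for regular triangulations; both are available in \cite{KS16}, and in fact \cite[Remark 7.23]{KS16} — cited in the paragraph preceding the theorem — is precisely the assertion that box-unimodality of a regular lattice triangulation suffices for unimodality of the local $h^\ast$-polynomial, with no reflexivity hypothesis required. Granting this machinery, the theorem follows as an immediate consequence of Theorem~\ref{thm: box unimodal}.
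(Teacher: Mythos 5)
Your proposal is correct and takes essentially the same route as the paper, which likewise deduces the theorem by combining Theorem~\ref{thm: box unimodal} with the Katz--Stapledon local decomposition \cite[Lemma 7.12(4)]{KS16} and the symmetry and unimodality of the coefficient polynomials for regular subdivisions \cite[Theorem 6.1]{KS16}. The only cosmetic difference is that the correct coefficients in that decomposition are the relative local $h$-polynomials of \cite{KS16} rather than interior-link $h$-polynomials, but since \cite[Theorem 6.1]{KS16} supplies exactly the symmetry and unimodality you need for those factors, your argument goes through unchanged.
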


\begin{proof}
The result follows immediately by combining Theorem~\ref{thm: box unimodal} with \cite[Lemma 7.12(4)]{KS16} and \cite[Theorem 6.1]{KS16}.  
\end{proof}

\begin{remark}
\label{rmk: more generally speaking}
More generally speaking, the same proof shows that if $P$ is a lattice polytope with a box unimodal triangulation then its local $h^\ast$-polynomial is unimodal.  
This can be viewed as somewhat of a ``local'' analogue to the well-known theorem of Bruns and R\"omer which states that if $P$ is a Gorenstein polytope with a regular and unimodular triangulation then its $h^\ast$-polynomial is unimodal \cite[Theorem 1]{BrR07}.
\end{remark}

\section{Final Remarks}
\label{sec: final remarks}

In this note, we observed that the local $h^\ast$-polynomials, or box polynomials, of a family of lattice simplices known as the $s$-lecture hall simplices generalize well-studied families of derangement polynomials in the combinatorial literature.  
Moreover, this generalization preserves all of the desirable distributional properties of the classical derangement polynomial; namely, symmetry, real-rootedness, log-concavity, unimodality, and $\gamma$-nonnegativity.  
Using these results, we showed that the local $h$-polynomials of a number of well-studied subdivisions of a simplex can be realized as local $h^\ast$-polynomials of $s$-lecture hall simplices.  
Consequently, we were able to answer some open questions pertaining to the real-rootedness of certain local $h$-polynomials for flag geometric subdivisions of a simplex.  
In the context of convex lattice polytopes, the real-rootedness of the $s$-derangement polynomials allowed us to recover that all $s$-lecture hall order polytopes have a box unimodal triangulation, that they all have unimodal local $h^\ast$-polynomials, and the $h^\ast$-polynomials of reflexive $s$-lecture hall order polytopes are always unimodal.  
Additionally, we could further use these results to provide a partial answer to a conjecture on the unimodality of the $h^\ast$-polynomials of a family of $s$-lecture hall order polytopes.  

In this way, it seems that the $s$-derangement polynomials provide a context in which various questions pertaining to local $h$-polynomials of subdivisions of simplicial complexes and $h^\ast$-polynomials of lattice polytopes can be simultaneously addressed.  
To better understand possible future applications of these methods, it could be useful to further analyze the family of $s$-derangement polynomials.  
In particular, considering the converse to Question~\ref{quest: local relations}, it would be interesting to know if every $s$-derangement polynomial can be realized as the local $h$-polynomial of a subdivision of a simplex.   
By \cite[Example 7.19]{KS16}, one way this could be affirmed is to show that all $s$-lecture hall simplices admit a regular and unimodular triangulation. 
Such a triangulation has recently been identified for some $s$-lecture hall simplices \cite{BBKSZb, BS20, HOT16}, but not all.  
Since, here, we have shown that all $s$-derangement polynomials are $\gamma$-nonnegative, a deeper understanding of the triangulations of $s$-lecture hall simplices could thereby offer new insights on the theory of local $h$-polynomials and questions of $\gamma$-nonnegativity for flag homology spheres and their subdivisions.  
Furthermore, it would also be of general interest to see a strictly combinatorial proof of the $\gamma$-nonnegativity of all $s$-derangement polynomials.

\smallskip

\noindent
{\bf Acknowledgements}. 
Liam Solus was supported by an NSF Mathematical Sciences Postdoctoral Research Fellowship (DMS - 1606407), the Wallenberg Autonomous Systems and Software Program (WASP), and Vetenskapsr\aa{}det. 
This paper is based on the master's thesis of the first author \cite{G18}, which was supervised by the second.  The authors would like to thank an anonymous referee for their thoughtful comments on this paper.

\end{document}